\newtheorem{theorem}{Theorem}[section]
\newtheorem{lemma}[theorem]{Lemma}
\newtheorem{definition}[theorem]{Definition}
\newtheorem{corollary}[theorem]{Corollary}
\newenvironment{proof}{\paragraph{\textbf{Proof}}}
 {\nopagebreak\hfill\nopagebreak\rule{2mm}{2mm}\par\bigskip}
\newcommand*{\QED}{\nopagebreak\hfill\nopagebreak\rule{2mm}{2mm}\par\bigskip}%
\newcommand{\m}[1]{\text{ (mod } #1 \text{)}}%
\newcounter{CaseCount}
\begin{document}

\title{The Domination Equivalence Classes of Paths}
\author{J.I.~Brown and I. Beaton\\
Department of Mathematics and Statistics \\
Dalhousie University, Halifax, NS B3H 3J5, Canada\\
\date{}
}
\maketitle

\begin{abstract}
A dominating set $S$ of a graph $G$ of order $n$ is a subset of the vertices of $G$ such that every vertex is either in $S$ or adjacent to a vertex of $S$. 
The domination polynomial is defined by $D(G,x) = \sum d(G,i)x^i$ where $d(G,i)$ is the number of dominating sets in $G$ with cardinality $i$. Two graphs $G$ and $H$ are considered $\mathcal{D}$-equivalent if $D(G,x)=D(H,x)$. 
The equivalence class of $G$, denoted $[G]$, is the set of all graphs $\mathcal{D}$-equivalent to $G$. 
Extending previous results, we determine the equivalence classes of all paths.
\end{abstract}


\section{Introduction}

Let $G=(V,E)$ be a graph. 
A set $S$ of the vertex set $V$ of graph $G$ is a \textbf{dominating set} if 
for each $v \in V(G)$, either $v \in S$ or there exists $u \in S$ which is adjacent to $v$. The \textbf{domination number} of $G$, denoted $\gamma(G)$, is the cardinality of the smallest dominating set of $G$.
There is a long history of interest in domination in both  pure and applied settings \cite{funddom,Hooker}.

As for many graph properties, one can more thoroughly investigate domination via a generating function. Let $\mathcal{D}(G,i)$ be the collection of dominating sets of a graph $G$ with cardinality $i$ and let $d(G,i) = |\mathcal{D}(G,i)|$. Then the \textbf{domination polynomial} $D(G,x)$ of $G$ is defined as
$$D(G,i) = \sum_{i=\gamma (G)}^{|V(G)|} d(G,i)x^i.$$
 
A natural question to ask is to what extent can a graph polynomial describe the underlying graph (for example, a survey of what is known with regards to chromatic polynomials can be found in \cite[ch. 3]{dongbook}. We say that two graphs $G$ and $H$ are \textbf{domination equivalent} or simply ${\mathcal{D}}$\textbf{-equivalent} (written $G \sim_{\mathcal{D}} H$) if they have the same domination polynomial.
As in \cite{2010Char}, we let $[G]$ denote the ${\mathcal{D}}$-equivalence class determined by $G$, that is $[G] = \{H | H \sim_{\mathcal{D}} G\}$. A graph $G$ is said to be \textbf{dominating unique} or simply \textbf{$\mathcal{D}$-unique} if $[G] = \{H| H \text{ is ismorphic to } G\}$.

Two problems arise: Which graphs are $\mathcal{D}${\textbf{-unique}}, that is, are completely determined by their domination polynomials? More generally, can we determine the $\mathcal{D}$-equivalence class of a graph?
Both problems appear difficult, but there are some partial results known.
In \cite{2014EqCycleAll} Akbari and Oboudi showed all cycles are $\mathcal{D}$-unique. Anthony and Picollelli classified all complete $r$-partite graphs which are $\mathcal{D}$-unique in \cite{2014EqBipartAll}. In \cite{2011Order10} Alikhani and Peng showed most cubic graphs of order 10 (including the Peterson graph) are $\mathcal{D}$-unique.
In \cite{2012Recurr} Kotek, Preen, and Simon defined and characterized irrelevant edges. These are edges which can be removed without changing the domination polynomial of a graph. From this they could show various trees (in particular paths \cite{2010Char}) barbell graphs \cite{2015EqGen}, and other graphs are not $\mathcal{D}$-unique.

In \cite{2010Char} Akbari, Alikhani and Peng considered the ${\mathcal{D}}$-equivalence classes of paths, and showed that $[P_n] = \{P_n, \widetilde{P_n}\}$ for $n \equiv 0 \pmod{3}$ where $\widetilde{P_n}$ is the graph obtained by added an edge between the two stems in $P_n$. In this paper we extend this result and determine the ${\mathcal{D}}$-equivalence class for path $P_{n}$ for {\textit all} $n$.

A few definitions are in order before we begin. The \textbf{order} of a graph is its number of vertices; $P_{n}$ and $C_{n}$ denote the \textbf{path} and \textbf{cycle} of order $n$, respectively. The set of vertices $N(v) = \{u|uv \in E(G)\}$ 
is called the \textbf{open neighbourhood} of $v$; similarly, $N[v] = N(v) \cup \{v\}$ is called the \textbf{closed neighbourhood} of $v$ (clearly 
$\displaystyle{N[S] = \cup_{v \in S} N[v]}$. A vertex of degree $1$ is a \textbf{ leaf}, its neighbour is a \textbf{stem}, and the edge between them is called a \textbf{pendant edge}. 

\section{Coefficients of Domination Polynomials}
\label{sec:Coeff}

We start with an examination of what the domination polynomial encodes about graphs in general, and about paths in particular. 
Some coefficients of domination polynomials are know for general graphs. In \cite{2014Intro}, Alikhani and Peng determined $d(G,n-1)$ and $d(G,n-2)$ in terms of certain properties within the graph. In this section we will give general (though involved) formulae for $d(G,n-3)$ and $d(G,n-4)$, as well as derive some properties of the coefficients for the domination polynomials of paths. All of these will be helpful in the following section where we determine the equivalence class for paths.

\begin{theorem}
\label{thm:DomPolyGeo}

\textnormal{\cite{2014Intro}} Let $G$ be a graph of order $n$ with $t$ vertices of degree one and $r$ isolated vertices. If $D(G,x) = \sum_{i=1}^n d(G,i)x^i$ is its domination polynomial then the following hold:
\renewcommand{\labelenumi}{(\roman{enumi})}
 \begin{enumerate}
   \item $d(G,n-1) = n - r$.
   \item $d(G,n-2) = {n \choose 2} - t$ if $G$ has no isolated vertices and no $K_2$-components.
 \end{enumerate}
\QED 
\end{theorem}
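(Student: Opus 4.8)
The plan is to prove both identities by complementary counting, using the elementary observation that a dominating set of size $n-k$ is exactly the complement of a $k$-element subset of $V(G)$. Thus instead of enumerating large dominating sets directly, I only need to decide which small vertex subsets can be deleted while leaving a dominating set behind.

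For (i), I would note that $S = V(G) \setminus \{v\}$ is dominating if and only if $v$ itself is dominated by $S$, since every other vertex lies in $S$ and dominates itself. Now $v$ has a neighbour in $S = V(G) \setminus \{v\}$ precisely when $v$ is not isolated. Hence the dominating sets of size $n-1$ are in bijection with the non-isolated vertices, of which there are $n - r$, giving $d(G, n-1) = n - r$.

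For (ii), I would again pass to complements: $S = V(G) \setminus \{u,v\}$ is dominating if and only if both deleted vertices retain a neighbour in $S$, that is $N(u) \setminus \{v\} \neq \emptyset$ and $N(v) \setminus \{u\} \neq \emptyset$. I would then count the \textbf{bad} pairs, those whose deletion destroys domination, and subtract from the total of $\binom{n}{2}$ pairs. Since $G$ has no isolated vertices, the failure condition $N(u) \setminus \{v\} = \emptyset$ forces $N(u) = \{v\}$, i.e.\ $u$ is a leaf whose stem is $v$; so a pair $\{u,v\}$ is bad exactly when one of its two vertices is a leaf adjacent to the other.

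The crux of the argument, and the place where both hypotheses are genuinely used, is establishing that the bad pairs are counted without overlap. The no-$K_2$-component hypothesis guarantees that a pair cannot be bad from both sides at once: if $u$ were a leaf with stem $v$ and simultaneously $v$ a leaf with stem $u$, then $\{u,v\}$ would induce a $K_2$-component, which is excluded. Consequently each bad pair arises from a \emph{unique} leaf paired with its \emph{unique} stem, and distinct leaves yield distinct bad pairs, the degenerate coincidence again being ruled out by the absence of $K_2$-components. Since there are exactly $t$ leaves, there are exactly $t$ bad pairs, and therefore $d(G, n-2) = \binom{n}{2} - t$. I expect the only delicate point to be this disjointness bookkeeping; the remainder is a direct translation of the domination requirement into a neighbourhood condition on the deleted vertices.
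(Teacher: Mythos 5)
Your proof is correct, and both parts are complete: part (i) needs no hypotheses and uses none, and in part (ii) you correctly isolate exactly where each hypothesis enters (no isolated vertices forces a non-dominated deleted vertex to be a leaf adjacent to the other deleted vertex; no $K_2$-components makes the map from leaves to bad pairs injective, and it is surjective for free). One point of comparison is worth noting: the paper does not prove this theorem at all; it is quoted from the literature (the citation attached to the statement) and used as a black box. So there is no in-paper argument to measure yours against. That said, your method, counting the complements of small subsets and characterizing which deletions destroy domination, is precisely the technique the paper develops for its own, harder coefficient computations: Lemma~\ref{lem:notdomcomp} is exactly your complementation observation, and Lemma~\ref{lem:d(G,n-k)} is the general inclusion-correction formula whose $k=1,2$ cases recover your argument (your ``bad pairs'' are the $2$-subsets that encompass a vertex, and your injectivity step is the $k=2$ instance of the over-counting correction $\max(f_G(H_k,V-W)-1,0)$, which vanishes there because a pair encompassing two vertices would be a $K_2$-component). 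So your proof not only fills in the cited result but does so in a way that is continuous with the paper's own machinery.
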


When counting the number of dominating sets with cardinality close to $n$, it is sometimes simpler to count the number of subsets which are not dominating. A subset $S \subseteq V(G)$ is not dominating if there exists a vertex $v$ in $G$ such that none of its neighbours, nor itself, is in $S$. That is, $N[v] \cap S = \emptyset$. The next elementary lemma will help us identify which subsets are not dominating.

\begin{lemma}
\label{lem:notdomcomp}
For a graph $G$ and $S \subseteq V(G)$, $S$ is not dominating if and only if there exists a vertex $v \in \overline{S} = V(G)-S$ which is encompassed by $\overline{S}$.
\QED
\end{lemma}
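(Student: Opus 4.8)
The plan is to prove this biconditional by unwinding the definitions on each side and invoking the single set-complement identity $N[v] \cap S = \emptyset \iff N[v] \subseteq \overline{S}$. Here I read ``$v$ is encompassed by $\overline{S}$'' as the condition $N[v] \subseteq \overline{S}$, which is exactly the reformulation of the motivating sentence $N[v] \cap S = \emptyset$ stated just before the lemma. With that reading, the whole statement is an elementary restatement of the definition of domination in terms of the complement $\overline{S}$, so the argument is short.

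For the forward direction, I would assume $S$ is not a dominating set. By definition there is a vertex $v \in V(G)$ that is neither in $S$ nor adjacent to any vertex of $S$, which says precisely that $N[v] \cap S = \emptyset$. Since $v \in N[v]$, this in particular forces $v \notin S$, so $v \in \overline{S}$. Rewriting the emptiness condition as $N[v] \subseteq \overline{S}$ then exhibits $v$ as a vertex of $\overline{S}$ that is encompassed by $\overline{S}$, as required.

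For the converse, I would suppose there exists $v \in \overline{S}$ with $N[v] \subseteq \overline{S}$. Taking complements immediately gives $N[v] \cap S = \emptyset$, so $v$ lies outside $S$ and has no neighbour in $S$; hence $v$ is undominated by $S$ and $S$ fails to be a dominating set. This closes the equivalence.

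There is no genuine obstacle in the argument itself; the only point requiring care is pinning down the meaning of ``encompassed'' so that it is literally the complement-form of the undominated condition $N[v] \cap S = \emptyset$. Once that definition is fixed, both implications reduce to the tautology that $N[v]$ meets $S$ if and only if $N[v]$ is not contained in $\overline{S}$, and the lemma follows.
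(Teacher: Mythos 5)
Your proof is correct and matches the intended argument: the paper itself omits a proof of this lemma (marking it as elementary), and your definitional unwinding of ``not dominating'' via $N[v] \cap S = \emptyset \iff N[v] \subseteq \overline{S}$ is exactly the reasoning it presupposes. Your reading of ``encompassed by $\overline{S}$'' as $N[v] \subseteq \overline{S}$ is also the correct interpretation, consistent with the paper's use of the term for the sets $H_k$ and $f_G(H_k,U)$.
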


%
%

We now determine the number of dominating sets of a certain size by counting the number of subsets of vertices with a given cardinality which contain the closed neighbourhood of one of its vertices. 
We focus on graphs $G$ with no isolated vertices and no $K_2$ components, as these are what will arise in the next section when considering graphs that are domination equivalent to paths. We first define some graph parameters and subsets.


\begin{enumerate}
\item[$\bullet$] $T_r$: The set of vertices of degree $r$ in $G$ {\em which are not stems}.
\item[$\bullet$] $\omega $:  The number of stems in $G$.\item[$\bullet$] $W = \{s_{1},s_{2},\ldots,s_{\omega}\}$: The set of all stems in $G$.
\item[$\bullet$] $S_i$:  The set of leaves attached to  stem $s_{i}$.
\item[$\bullet$] $H_k$:  A generic subset of vertices in $G$ of cardinality $k$ (\emph{a $k$-subset} of $G$).
\item[$\bullet$] $f_G(H_k,U)$: The number of vertices of $U$ which are encompassed by $H_k$ (such vertices are by necessity in $H_{k}$).
\end{enumerate}

\begin{figure}[h]
\def\c{0.7}
\def\r{2}
\centering
\scalebox{\c}{
\begin{tikzpicture}
\begin{scope}[every node/.style={circle,thick,draw}]
    \node (1) at (0*\r,0*\r) {$l_1$};
    \node (2) at (0*\r,1*\r) {$l_2$};
    \node (3) at (0*\r,-1*\r) {$l_3$};
    \node (4) at (1*\r,0*\r) {$s_1$};
    \node (5) at (2*\r,0*\r) {$s_2$};
    \node (6) at (1.5*\r,-1*\r) {$l_4$};
    \node (7) at (2.5*\r,-1*\r) {$l_5$};
    \node (8) at (2.5*\r,1*\r) {$v_1$};
    \node (9) at (2*\r,1.5*\r) {$v_2$};
    \node (10) at (1.5*\r,1*\r) {$v_3$};
    \node (11) at (3*\r,0*\r) {$v_4$};
    \node (12) at (4*\r,0*\r) {$s_3$};
    
    \node (13) at (3.5*\r,-1*\r) {$l_6$};
    \node (14) at (4.5*\r,1*\r) {$v_5$};
    \node (15) at (5.25*\r,1.5*\r) {$v_6$};
    \node (16) at (6*\r,0.75*\r) {$v_7$};
    \node (17) at (6*\r,-0.75*\r) {$v_8$};
    \node (18) at (5.25*\r,-1.5*\r) {$v_9$};
    \node (19) at (4.5*\r,-1*\r) {$v_{10}$};
    \node (20) at (5*\r,0.5*\r) {$v_{11}$};
    \node (21) at (5*\r,-0.5*\r) {$v_{12}$};
    
    \node (22) at (3.5*\r,1*\r) {$s_4$};
    \node (23) at (3.5*\r,1.5*\r) {$l_7$};
    
\end{scope}

\begin{scope}
    \path [-] (1) edge node {} (4);
    \path [-] (2) edge node {} (4);
    \path [-] (3) edge node {} (4);
    \path [-] (4) edge node {} (5);
    
    \path [-] (5) edge node {} (6);
    \path [-] (5) edge node {} (7);
    \path [-] (5) edge node {} (8);
    \path [-] (8) edge node {} (9);
    \path [-] (9) edge node {} (10);
    \path [-] (10) edge node {} (5);
    
    \path [-] (5) edge node {} (11);
    \path [-] (11) edge node {} (12);
    
    \path [-] (12) edge node {} (13);
    \path [-] (12) edge node {} (14);
    \path [-] (14) edge node {} (15);
    \path [-] (15) edge node {} (16);
    \path [-] (16) edge node {} (17);
    \path [-] (17) edge node {} (18);
    \path [-] (18) edge node {} (19);
    \path [-] (19) edge node {} (12);

    \path [-] (12) edge node {} (20);
    \path [-] (20) edge node {} (21);
    \path [-] (21) edge node {} (12);
    
    \path [-] (12) edge node {} (22);
    \path [-] (22) edge node {} (23);

\end{scope}
\end{tikzpicture}}
\caption{An example of a graph}%
\label{fig:EqG2m}%
\end{figure}
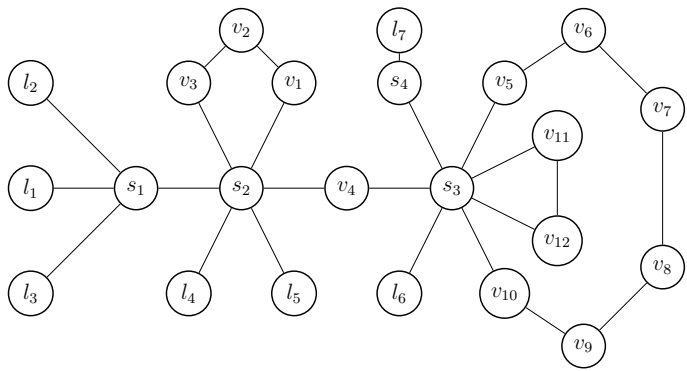

As an example, consider the graph in Figure~\ref{fig:EqG2m}. The set of stems $W$ is $\{s_1, s_2, s_3,s_4\}$ and $\omega = 4$. There are no degree zero vertices therefore $T_0 = \emptyset$. There are six degree one vertices (leaves), none of which are stems, therefore $T_1 = \{l_1,l_2, l_3, l_4, l_5, l_6,l_7\}$. There are 13 degree two vertices, one of which ($s_4$) is a stem, so $T_2 = \{v_i|1 \leq i \leq 12\}$.  The sets of leaves are $S_1 = \{l_1, l_2, l_3\}$, $S_2 = \{l_4, l_5\}$, $S_3 = \{l_6\}$, and $S_4 = \{l_7\}$. An example of an $8$-subset is $H_8 = \{s_3, s_4, l_1, l_6, l_7, v_2, v_{11}, v_{12}\}$. The vertices which are encompassed by $H_8$ are $s_4$, $l_6$, $l_7$, $v_{11}$, and $v_{12}$. Therefore $f_G(H_8,V)=5$ and $f_G(H_8,V-W)=4$ as $s_4 \in W$.


We are now ready to count the number of dominating sets of a given size in a graph without a $K_2$ component.
 
\begin{lemma}
\label{lem:d(G,n-k)}
For a graph $G$ of order $n$ with no $K_2$ components and $k \in \mathbb{N}$, where $2 \leq k \leq n-\gamma(G)$, 

$$d(G,n-k) = {n \choose k} -\bigg(\sum_{i=0}^{k-1}|T_i|{n-i-1 \choose k-i-1}-\sum_{\substack{H_k \subseteq V \\ |H_k|=k}} \mbox{max}(f_G(H_k,V-W)-1,0)\bigg).$$
\end{lemma}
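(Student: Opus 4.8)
The plan is to count dominating sets of size $n-k$ by complementation, counting instead the $k$-subsets that arise as complements of \emph{non}-dominating sets. There are $\binom{n}{n-k} = \binom{n}{k}$ subsets of size $n-k$, and the map $S \mapsto \overline{S}$ is a bijection onto the $k$-subsets. By Lemma~\ref{lem:notdomcomp}, $S$ fails to dominate precisely when its complement $H_k = \overline{S}$ encompasses at least one vertex, that is, when $f_G(H_k,V) \geq 1$. So my first step is to establish
$$d(G,n-k) = \binom{n}{k} - \left|\{H_k : f_G(H_k,V) \geq 1\}\right|,$$
which reduces the problem to counting the $k$-subsets that encompass some vertex.

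The second step is to replace $V$ by $V-W$ in this count, which is what removes the stems from the degree sum. The key observation is that if a stem $s$ is encompassed by $H_k$ then $N[s] \subseteq H_k$; since any leaf $\ell$ attached to $s$ satisfies $N[\ell] = \{\ell,s\} \subseteq N[s]$, the leaf $\ell$ is also encompassed. Because $G$ has no $K_2$ component, no leaf is itself a stem, so $\ell \in V-W$. Hence a $k$-subset encompasses a stem only if it also encompasses a non-stem leaf, giving $f_G(H_k,V) \geq 1 \iff f_G(H_k,V-W) \geq 1$ and therefore $\left|\{H_k : f_G(H_k,V)\geq 1\}\right| = \left|\{H_k : f_G(H_k,V-W)\geq 1\}\right|$.

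For the third step I would convert this ``at least one'' count into the two sums in the statement using the elementary identity that, for every nonnegative integer $m$, one has $\min(m,1) = m - \max(m-1,0)$. Applying it with $m = f_G(H_k,V-W)$ and summing over all $k$-subsets yields
$$\left|\{H_k : f_G(H_k,V-W)\geq 1\}\right| = \sum_{\substack{H_k \subseteq V \\ |H_k|=k}} f_G(H_k,V-W) - \sum_{\substack{H_k \subseteq V \\ |H_k|=k}} \max\bigl(f_G(H_k,V-W)-1,0\bigr).$$
It then remains to evaluate the first sum by double counting the pairs $(v,H_k)$ with $v \in V-W$ and $N[v] \subseteq H_k$: a non-stem $v$ of degree $i$ has $|N[v]| = i+1$, so the number of $k$-subsets containing $N[v]$ is $\binom{n-i-1}{k-i-1}$. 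Grouping the vertices of $V-W$ by degree gives $\sum_{H_k} f_G(H_k,V-W) = \sum_{i=0}^{k-1} |T_i|\binom{n-i-1}{k-i-1}$, where terms with $i \geq k$ vanish by the convention that $\binom{a}{b}=0$ for $b<0$. Substituting these pieces back into the first display produces exactly the claimed formula.

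I expect the main obstacle to be the stem/leaf reduction of the second step, since it is the only place the hypotheses genuinely enter: one must verify both that an encompassed stem forces an encompassed leaf and that this leaf lies in $V-W$, the latter being precisely where the absence of $K_2$ components is needed (otherwise a leaf could itself be a stem and the reduction would fail). Everything else is routine bookkeeping with binomial coefficients and the $\min$/$\max$ identity.
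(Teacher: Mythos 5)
Your proposal is correct and takes essentially the same route as the paper's own proof: complementation via Lemma~\ref{lem:notdomcomp}, the reduction from encompassed stems to encompassed leaves (using $N[\ell] \subseteq N[s]$), and then counting $k$-subsets with multiplicity $f_G(H_k,V-W)$ and subtracting the overcount $\max(f_G(H_k,V-W)-1,0)$. Your packaging of the overcount correction as the identity $\min(m,1)=m-\max(m-1,0)$ together with an explicit double count is just a tidier formulation of the paper's argument, with the added merit that you pinpoint exactly where the no-$K_2$-component hypothesis enters (namely, that the leaf forced by an encompassed stem is itself a non-stem), a point the paper's proof leaves implicit.
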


\begin{proof}

As there are ${n \choose k}$ $k$-subsets of vertices in $G$, ${n \choose k}-d(G,n-k)$ is the number of $(n-k)$-subsets of $G$ which are not dominating. Thus by Lemma \ref{lem:notdomcomp}, the number of $(n-k)$-subsets which are not dominating is equivalent to the number of $k$-subsets which encompass at least one vertex. Therefore it is sufficient to show the number of $k$-subsets which encompass at least one vertex is 

\begin{eqnarray}
\sum_{i=1}^{k-1}|T_i|{n-i-1 \choose k-i-1}-\sum_{\substack{H_k \subseteq V \\ |H_k|=k}} \text{max}(f_G(H_k,V-W)-1,0). \label{dnminusk}
\end{eqnarray}

\noindent For each vertex $v \in T_i$, its closed neighbourhood has order $i+1$. If $i \leq k-1$ then there are ${n-i-1 \choose k-i-1}$ $k$-subsets which encompass $v$. If $i > k-1$ then $|N[v]|>k$ and no $k$-subset can encompass $v$. Therefore the first term's count includes every $k$-subset which encompasses a non-stem vertex. We omit the stems of $G$ as any $k$-subset which encompasses a stem $s$ must also encompass one of its leaves $l$ as $N[l] \subseteq N[s]$. Hence each of these $k$-subsets are counted when we count every $k$-subset which contains $l$.

If a $k$-subset $H_k$ encompasses at least one vertex, we wish only to count it once. However, our first term counts each $k$-subset for each non-stem vertex it encompasses. That is, each $H_k$ is counted $f_G(H_k,V-W)$ times and hence over-counted $f_G(H_k,V-W)-1$ times. In the case where $f_G(H_k,V-W) \leq 1$ we have not over counted. As this implies $f_G(H_k,V-W)-1 \leq 0$, it is sufficient to subtract $\mbox{max}(f_G(H_k,V-W)-1,0)$ for each $H_k$ of $G$. This gives us the second term.

\end{proof}

The value of $k$ puts restrictions on both $f_G(H_k,V-W)$ and $H_k$. As $H_k$ has order $k$, it can encompass at most $k$ vertices and hence $f_G(H_k,V-W) \leq k$. Furthermore, if $f_G(H_k,V-W)>1$ then $H_k$ encompasses a vertex $v$, so $N[v] \subseteq H_k$ and therefore $|N[v]| \leq k$, and it follows that any vertex $H_k$ encompasses must have degree less than $k$. In the next lemma we will use Lemma \ref{lem:d(G,n-k)} to determine $d(G,n-3)$ for a graph $G$ of order $n$ with no isolated vertices and no $K_2$ components. Before we begin, we need yet a few more definitions. An \textbf{$\mathbf{r}$-loop} is an induced $r$-cycle in $G$ such that all but one vertex has degree two in $G$. 
Examples of $r$-loops can be found in Figure~\ref{fig:EqG2m}; the vertices $s_3$, $v_{11}$, and $v_{12}$ form a $3$-loop, and the vertices $s_3, v_{5}, v_{6}, \ldots, v_{10}$ form a $7$-loop. The vertices $s_4$, $v_{1}$, $v_{2}$, and $v_{3}$ also form a $4$-loop. Further, we use the following notation, all with respect to a graph $G$:

\begin{enumerate}
\item[$\bullet$] $\mathcal{L}_r$: The set of $r$-loop.
\item[$\bullet$] $\mathcal{L}_r^i$: The set of $r$-loop subgraphs which contain stem $s_i$.
\item[$\bullet$] $\mathcal{C}_r$: The set of components which are cycles of order $r$.
\end{enumerate}

\begin{theorem}
\label{thm:d(G,n-3)}
For a graph $G$ of order $n$ where $G$ has no isolated vertices and no $K_2$ components,

$$d(G,n-3) = {n \choose 3} -\bigg(|T_1| \cdot (n-2)+|T_2|-\sum_{i=1}^{\omega} {|S_i| \choose 2}-|\mathcal{L}_3|-2|\mathcal{C}_3|\bigg).$$

\end{theorem}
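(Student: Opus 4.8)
The plan is to specialize Lemma~\ref{lem:d(G,n-k)} to $k=3$ and then evaluate the resulting over-counting sum $\sum_{H_3}\max(f_G(H_3,V-W)-1,0)$ by a structural classification of the relevant $3$-subsets. Setting $k=3$ in Lemma~\ref{lem:d(G,n-k)} and using that $G$ has no isolated vertices (so $T_0=\emptyset$), the first sum collapses to $|T_1|\binom{n-2}{1}+|T_2|\binom{n-3}{0}=|T_1|(n-2)+|T_2|$. Thus it remains to establish the identity
\[
\sum_{\substack{H_3\subseteq V\\ |H_3|=3}}\max\bigl(f_G(H_3,V-W)-1,\,0\bigr)=\sum_{i=1}^{\omega}\binom{|S_i|}{2}+|\mathcal{L}_3|+2|\mathcal{C}_3|.
\]

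First I would note that a term contributes only when $f_G(H_3,V-W)\ge 2$, that is, when $H_3$ encompasses at least two non-stem vertices; since every encompassed vertex lies in $H_3$ and satisfies $|N[v]|\le 3$, each such vertex has degree $1$ or $2$. I would then case-split on the degrees of the encompassed non-stems. If two encompassed leaves $l_1,l_2$ occur, their closed neighbourhoods $\{l_1,s_1\}$ and $\{l_2,s_2\}$ both lie in the $3$-set $H_3$; since a leaf differs from its stem and from any other stem, this forces $s_1=s_2$ (a common stem $s$) and $H_3=\{l_1,l_2,s\}$, which encompasses exactly the two non-stems $l_1,l_2$ and so contributes $1$. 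The number of such sets is $\sum_{i=1}^{\omega}\binom{|S_i|}{2}$, the first target term. Next I would rule out the mixed configuration of one encompassed leaf $l$ and one encompassed degree-$2$ non-stem $v$: since $|N[v]|=3$ we must have $H_3=N[v]$, whence $l\in N(v)$; but the unique neighbour of the leaf $l$ is its stem, forcing $v$ to be that stem, contradicting $v\in V-W$.

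The remaining case is two or three encompassed degree-$2$ non-stems. Each such vertex $v$ has $|N[v]|=3$, so its closed neighbourhood coincides with $H_3$; two coinciding closed neighbourhoods force $H_3$ to induce a triangle, and any degree-$2$ vertex of a triangle is automatically a non-stem (its two neighbours lie in the triangle and hence are not leaves). For a triangle $H_3$, a vertex is encompassed precisely when it has degree $2$, so $f_G(H_3,V-W)$ equals the number of degree-$2$ vertices of the triangle; the weight $\max(f-1,0)$ is then $1$ when exactly two vertices have degree $2$ (an induced $3$-cycle with all but one vertex of degree $2$, i.e.\ a member of $\mathcal{L}_3$) and $2$ when all three have degree $2$ (in which case no edge leaves, giving a $C_3$ component in $\mathcal{C}_3$). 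Adding the three disjoint contributions $\sum_i\binom{|S_i|}{2}$, $|\mathcal{L}_3|$, and $2|\mathcal{C}_3|$ yields the identity and hence the theorem. The main obstacle is making the case analysis genuinely exhaustive, in particular confirming that the leaf/degree-$2$ mixed configuration is impossible and correctly matching the weights $1$ and $2$ to $3$-loops and $C_3$-components.
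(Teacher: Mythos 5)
Your proposal is correct and takes essentially the same approach as the paper's proof: specialize Lemma~\ref{lem:d(G,n-k)} to $k=3$, then classify the $3$-subsets encompassing at least two non-stem vertices into pairs of leaves on a common stem, $3$-loops, and $C_3$ components (ruling out the mixed leaf/degree-two configuration by the same stem contradiction), with weights $1$, $1$, and $2$ respectively. Your organization of the cases by the degrees of the encompassed vertices is just a rephrasing of the paper's split according to whether the encompassed set meets $T_1$, $T_2$, or both.
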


\begin{proof}

By Lemma \ref{lem:d(G,n-k)} we know

$$d(G,n-3) = {n \choose 3} -\bigg(\sum_{i=0}^{2}|T_i|{n-i-1 \choose 3-i-1}-\sum_{H_3 \subseteq V} \text{max}(f_G(H_3,V-W)-1,0)\bigg).$$

\noindent As $G$ has no isolated vertices, $|T_0|=0$ and $\displaystyle{\sum_{i=0}^{2}|T_i|{n-i-1 \choose 3-i-1} = |T_1| \cdot (n-2)+|T_2|}$. Now it is sufficient to show

\begin{center}
\begin{equation} \label{eq:pf_d(G,n-3)}
\sum_{H_3 \subseteq V} \text{max}(f_G(H_3,V-W)-1,0) = \sum_{i=1}^{\omega} {|S_i| \choose 2} +|\mathcal{L}_3|+2|\mathcal{C}_3|.
\end{equation}
\end{center}

\noindent Of course, $\mbox{max}(f_G(H_3,V-W)-1,0)$ is only non-zero when $f_G(H_3,V-W) \geq 2$. Therefore we wish to find $3$-subsets of $G$ which encompass two or more non-stem vertices. Let $H = \{u,v,w\}$ be a $3$-subset of $G$ which encompass two or more non-stem vertices. As $H$ has order three, the non-stem vertices which it encompasses have degree at most two. As $G$ has no isolated vertices then the vertices which $H$ encompass are either all in $T_1$ or all in $T_2$ or both. Let $S$ be the set of vertices which $H$ encompasses. We now count each $H$ in the three aforementioned cases.

\vspace{5mm}
\noindent \textbf{Case 1:} \emph{$S \cap T_1 \neq \emptyset$ and $S \cap T_2 = \emptyset$}
\vspace{5mm}

\noindent As $H$ has order three then it either encompasses two or three vertices in $T_1$. Without loss of generality let $u$ and $v$ be two of the vertices encompassed by $H$. Then $u,v \in T_1$, and, as $G$ has no $K_2$ components, $u$ and $v$ are not adjacent. Furthermore as $\text{deg}(u)=\text{deg}(v)=1$ and $H$ encompasses both $u$ and $v$, $N(u)=\{w\}$ and $N(v)=\{w\}$. Therefore $u$ and $v$ are each leaves on the same stem $w$.

\vspace{5mm}
\noindent \textbf{Case 2:} \emph{$S \cap T_1 = \emptyset$ and $S \cap T_2 \neq \emptyset$}
\vspace{5mm}

\noindent As $H$ has order three then it either encompasses two or three vertices in $T_2$. Without loss of generality let $u$ and $v$ be two of the vertices encompassed by $H$. Then $u,v \in T_2$, $N(u)=\{v,w\}$ and $N(v)=\{u,w\}$. Therefore $H$ induces a $3$-cycle in $G$ and $\{u,v\} \subseteq N(w)$. This leaves us with two possibilities for $H$ -- either $\mbox{deg}(w)=2$ or $\mbox{deg}(w)>2$. If $\mbox{deg}(w)=2$ then $H$ is a $3$-cycle component of $G$, and if $\mbox{deg}(w)>2$ then $H$ is a $3$-loop in $G$.

\vspace{5mm}
\noindent \textbf{Case 3:} \emph{$S \cap T_1 \neq \emptyset$ and $S \cap T_2 \neq \emptyset$}
\vspace{5mm}

\noindent We claim this case is impossible. If $S$ contains at least one vertex from both $T_1$ and $T_2$ then let $v \in T_2$ with $N(v)=\{u,w\}$. As $H$ encompasses $v$, $u$ and $w$ are both vertices in $H$. Moreover, as $H$ is order three and $S$ contains at least one vertex in $T_1$, either $u \in T_1$ or $w \in T_1$. Without loss of generality let $u \in T_1$. Then $u$ is a leaf and has only one neighbour, $v$. However, then $v$ is a stem, and by definition not in $T_2$, which is a contradiction.

\vspace{5mm}

Our three cases have produced three possible $3$-subsets which encompass two or more non-stem vertices: two leafs on the same stem, $3$-cycle components, and $3$-loops. The three cases for $H$ are shown in Figure \ref{fig:H3}. Note the vertices which are encompassed by $H$ are shaded.

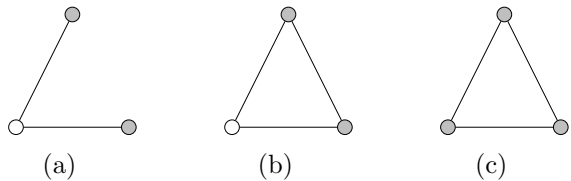
\begin{figure}[!h]
\def\c{0.5}
\centering
\subfigure[]{
\scalebox{\c}{
\begin{tikzpicture}
\node[shape=circle,draw=black,fill=white] (1) at (0,0) {};
\node[shape=circle,draw=black,fill=gray!50] (2) at (3,0) {};
\node[shape=circle,draw=black,fill=gray!50] (3) at (1.5,3) {};

\begin{scope}
    \path [-] (1) edge node {} (2);
    \path [-] (1) edge node {} (3);
\end{scope}
\end{tikzpicture}}}
\qquad
\subfigure[]{
\scalebox{\c}{
\begin{tikzpicture}
\node[shape=circle,draw=black,fill=white] (1) at (0,0) {};
\node[shape=circle,draw=black,fill=gray!50] (2) at (3,0) {};
\node[shape=circle,draw=black,fill=gray!50] (3) at (1.5,3) {};

\begin{scope}
    \path [-] (1) edge node {} (2);
    \path [-] (2) edge node {} (3);
    \path [-] (3) edge node {} (1);
\end{scope}
\end{tikzpicture}}}
\qquad
\subfigure[]{
\scalebox{\c}{
\begin{tikzpicture}
\node[shape=circle,draw=black,fill=gray!50] (1) at (0,0) {};
\node[shape=circle,draw=black,fill=gray!50] (2) at (3,0) {};
\node[shape=circle,draw=black,fill=gray!50] (3) at (1.5,3) {};

\begin{scope}
    \path [-] (1) edge node {} (2);
    \path [-] (2) edge node {} (3);
    \path [-] (3) edge node {} (1);
\end{scope}
\end{tikzpicture}}}
\caption{Every $3$-subset which encompasses two or more non-stems}%
\label{fig:H3}%
\end{figure}

Now we need only to sum $f_G(H,V-W)-1$ for each $3$-subset. We will sum each $f_G(H,V-W)-1$ by evaluating $f_G(H,V-W)-1$ each case then multiplying it by the number of times it occurs in $G$.

If $H$ is two leafs on the same stem then $f_G(H,V-W)-1=1$. This $3$-subset will occur ${|S_i| \choose 2}$ times for each stem. If $H$ is a $3$-loop then $f_G(H,V-W)-1=1$. This $3$-subset will occur $|\mathcal{L}_3|$ times. If $H$ is a $3$-cycle component then $f_G(H,V-W)-1=2$. This $3$-subset will occur $|\mathcal{C}_3|$ times. Taking the sum of each of the cases gives use the right hand side of equation (\ref{eq:pf_d(G,n-3)}).
\end{proof}

We now introduce a new collection of graphs.

\begin{definition}
Let $\mathcal{G}_k$ denotes the set of all graph $G$ with the property that every vertex is either a stem or has degree at most $k$.
\end{definition}

Our focus will be when $k=2$. Two familiar families of graphs in $\mathcal{G}_2$ are paths and cycles. Another example was shown in Figure \ref{fig:EqG2m}. For a graph $G$ of order $n$, clearly $G \in \mathcal{G}_2$ if and only if $\omega+|T_1|+|T_2|=n$. Note if $G \in \mathcal{G}_k$ and $G$ has a $r$-loop then the one vertex of the $r$-loop which is not degree two is a stem.

In the next lemma we will extend our work of Theorem~\ref{thm:d(G,n-3)} and determine $d(G,n-4)$ for a graph $G \in \mathcal{G}_2$ of order $n$ with no isolated vertices and no $K_2$ components (we will make essential use of this in the next section). The proof, although similar to that of Theorem~\ref{thm:d(G,n-3)}, is more involved. Before we begin, we will partition $T_2$ into subsets based on the number of neighbouring stems.


\begin{enumerate}
\item[$\bullet$] $V_0$: The subset of $T_2$ with no adjacent stems.
\item[$\bullet$] $V_1^{i}$:  The subset of $T_2$ adjacent to exactly one stem, stem $i$.
\item[$\bullet$] $V_2^{ij} $:  The subset of $T_2$ adjacent to exactly two stems, stems $i$ and $j$ (denoted $V_2$ when $G$ only has two stems).
\end{enumerate}

\begin{theorem}
\label{thm:d(G,n-4)}
Let $G \in \mathcal{G}_2$ be a graph of order $n$ with no isolated vertices and no $K_2$ components. Then

$$d(G,n-4) = {n \choose 4} -\bigg(|T_1| {n-2\choose 2} + |T_2|(n-3) - \alpha_1 -\alpha_2-\alpha_3     \bigg)$$

\noindent where

\begin{center}
\begin{tabular}{ l c l }
$\alpha_1$ & $=$ & $\sum\limits_{i=1}^{\omega} {|S_i| \choose 2}(n-|S_i|-1)+\sum\limits_{i=1}^{\omega} \frac{|S_i|}{2}(|T_1|-|S_i|)+2\sum\limits_{i=1}^{\omega} {|S_i| \choose 3},$ \\ 

$\alpha_2$ & $=$ & $\sum\limits_{i=1}^{\omega} |V_1^{i}||S_i|+\sum\limits_{i \neq j} |V_2^{ij}|(|S_i|+|S_j|), \mbox{ and}$ \\ 

$\alpha_3$ & $=$ & $|V_0|+\sum\limits_{i=1}^{\omega} \frac{|V_1^i|}{2}+\sum\limits_{i \neq j} {|V_2^{ij}| \choose 2}-|\mathcal{C}_4|+|\mathcal{C}_3|(2n-9)+\sum\limits_{i=1}^{\omega}|\mathcal{L}_3^i|(n-4-|S_i|).$ \\

\end{tabular}
\end{center}

\end{theorem}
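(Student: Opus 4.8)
The plan is to specialize Lemma~\ref{lem:d(G,n-k)} to $k=4$ and to evaluate the two sums that appear. Because $G\in\mathcal{G}_2$ has no isolated vertices, every vertex is a stem or has degree at most two, so $T_0=T_3=\emptyset$ and the first sum collapses:
\[
\sum_{i=0}^{3}|T_i|\binom{n-i-1}{3-i}=|T_1|\binom{n-2}{2}+|T_2|(n-3).
\]
These are precisely the non-$\alpha$ terms of the claimed formula, so the whole theorem reduces to proving
\[
\sum_{\substack{H_4\subseteq V\\ |H_4|=4}}\max\!\big(f_G(H_4,V-W)-1,0\big)=\alpha_1+\alpha_2+\alpha_3 .
\]

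Following the method of Theorem~\ref{thm:d(G,n-3)}, a summand is nonzero only when $H_4$ encompasses at least two non-stem vertices, and since $G\in\mathcal{G}_2$ each encompassed non-stem has degree one or two, hence a closed neighbourhood of size at most three. I would classify every such $H_4$ by the set $S$ of non-stems it encompasses, partitioning into three types according to whether $S\subseteq T_1$, $S$ meets both $T_1$ and $T_2$, or $S\subseteq T_2$; because $|H_4|=4$ this is a genuine partition and each $H_4$ is counted once with weight $f-1$. The first type yields $\alpha_1$: here $S$ is two or three leaves of a common stem, or one leaf on $s_i$ and one on $s_j$ with $i\ne j$. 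In the ``three leaves'' configuration $H_4$ is forced and $f-1=2$; in the ``two leaves plus a free vertex'' configuration the fourth vertex ranges over $V\setminus(\{s_i\}\cup S_i)$, the exclusion preventing an accidental third encompassed leaf (which is recorded instead in the previous configuration), and $f-1=1$; in the two-stem configuration $H_4=\{l,s_i,l',s_j\}$ is forced and $f-1=1$. These give the three summands of $\alpha_1$. The mixed type, where one leaf and one degree-two vertex are encompassed, forces $H_4$ entirely and always has $f-1=1$; splitting on whether the degree-two vertex lies in some $V_1^i$ or some $V_2^{ij}$ produces the two summands of $\alpha_2$. As in Figure~\ref{fig:H3} I would draw each configuration.

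The third type, $S\subseteq T_2$, is the crux and produces all of $\alpha_3$. Two encompassed degree-two non-stems $u,v$ must satisfy $|N[u]\cup N[v]|\le 4$, which forces $|N[u]\cap N[v]|\ge 2$ and leaves exactly three local pictures: an adjacent pair with distinct outer neighbours (an edge of $G[T_2]$ in no triangle), an adjacent pair with a common neighbour (a triangle, i.e.\ a $3$-loop or a $C_3$ component), or a non-adjacent pair with two common neighbours (a pair inside some $V_2^{ij}$ or a $C_4$ component). A short incidence count shows that the number of edges of $G[T_2]$ is exactly $|V_0|+\sum_i\frac{|V_1^i|}{2}$ and that the number of degree-two pairs sharing two stem neighbours is $\sum_{i\ne j}\binom{|V_2^{ij}|}{2}$, and each such \emph{generic} (triangle-free, cycle-free) pair forces $H_4$ and contributes $f-1=1$. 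This accounts for the first three summands of $\alpha_3$.

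The main obstacle is reconciling the degenerate pictures, where the naive weight $1$ per pair is wrong. The delicacy is threefold: inside a triangle or small cycle the equalities $N[u]=N[v]$ and the several encompassable pairs mean one $H_4$ is described many times; the true multiplicity $f-1$ jumps to $2$ or $3$ when a $C_3$ or $C_4$ is contained in $H_4$; and some extensions of a triangle leak into the mixed type already counted by $\alpha_2$ (when the free vertex is a leaf), so they must be removed from the pure-$T_2$ tally. Reconciling these gives per configuration: a $C_4$ component is the unique $4$-subset with $f=4$, overcounted by its four edges, hence $-|\mathcal{C}_4|$; a $C_3$ component has $f=3$ over each of its $n-3$ extensions, giving the correction $2n-9$; and a $3$-loop through $s_i$ contributes $f-1=1$ over its $n-3-|S_i|$ non-leaf extensions, giving $n-4-|S_i|$. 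These are exactly the terms $-|\mathcal{C}_4|+|\mathcal{C}_3|(2n-9)+\sum_i|\mathcal{L}_3^i|(n-4-|S_i|)$. I expect verifying that every $4$-subset meeting a triangle, $3$-cycle, or $4$-cycle is ultimately weighted by the correct $f-1$ to be the most error-prone step, and I would control it by fixing the encompassed pair and applying inclusion--exclusion over $S$ rather than making ad hoc adjustments.
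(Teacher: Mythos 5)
Your proposal takes essentially the same route as the paper's proof: specialize Lemma~\ref{lem:d(G,n-k)} to $k=4$, use $T_0=T_3=\emptyset$ to obtain the non-$\alpha$ terms, and classify the $4$-subsets of nonzero weight by whether the encompassed non-stems lie only in $T_1$, in both $T_1$ and $T_2$, or only in $T_2$. Your configuration lists and all the corrections in $\alpha_3$ (the $-|\mathcal{C}_4|$, the $2n-9$ per $C_3$ component, and the exclusion of leaf extensions of $3$-loops giving $n-4-|S_i|$) coincide with the paper's Cases 1--3.

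There is, however, one step that is false as written, and it is exactly the point where the paper is most careful. In the mixed case you assert that every such $H_4$ ``always has $f-1=1$.'' This fails when $H_4$ is a $3$-loop $\{s_i,x,w\}$ (so $x,w\in V_1^i$) together with a leaf $z\in S_i$: then $H_4$ encompasses $x$, $w$, and $z$, so $f_G(H_4,V-W)-1=2$. Your enumeration by pairs (encompassed leaf, encompassed degree-two vertex adjacent to its stem) counts this single $H_4$ twice, via $(z,x)$ and $(z,w)$, so your total $\sum_i|V_1^{i}||S_i|+\sum_{i \neq j}|V_2^{ij}|(|S_i|+|S_j|)$ does equal the correct weighted sum $\alpha_2$ --- but only because the overcount in the enumeration exactly matches the underestimate in the weight, a compensation your argument never establishes (and which is inconsistent with your claim that each pair determines a distinct $H_4$ of weight one). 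The paper establishes precisely this: the configuration of Figure~\ref{fig:H4T12}~(b) is counted twice, and that multiplicity ``is exactly equal to $f_G(H,V-W)-1$.'' The same per-configuration slip recurs in your pure-$T_2$ case: a $4$-cycle through exactly one stem yields a single $H_4$ with $f-1=2$ that is hit by two edges of $G[T_2]$, so ``each generic pair forces $H_4$ and contributes $f-1=1$'' is false pairwise, though right in aggregate. Both slips are repaired by restating the counting principle the way the paper does: show that for each relevant $H_4$ the number of pairs (or $T_2$-edges) selecting it equals $f_G(H_4,V-W)-1$, rather than claiming each $H_4$ is selected once with weight one.
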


\begin{proof}
By Lemma \ref{lem:d(G,n-k)} we know

$$d(G,n-4) = {n \choose 4} -\bigg(\sum_{i=0}^{3}|T_i|{n-i-1 \choose 4-i-1}-\sum_{H_4 \subseteq V} max(f_G(H_4,V-W)-1,0)\bigg).$$

\noindent As $G$ has no isolated vertices, $|T_0|=0$ and $\displaystyle{\sum_{i=0}^{3}|T_i|{n-i-1 \choose 4-i-1} = |T_1| {n-2\choose 2} + |T_2|(n-3)}$ (since $G \in \mathcal{G}_2$ implies $|T_3|=0$). Now it is sufficient to show

\begin{center}
\begin{equation} \label{eq:pf_d(G,n-4)}
\sum_{H_4 \subseteq V} \mbox{max}(f_G(H_4,V-W)-1,0) = \alpha_1+\alpha_2+\alpha_3.
\end{equation}
\end{center}

\noindent Now $\mbox{max}(f_G(H_4,V-W)-1,0)$ from the left hand side of equation (\ref{eq:pf_d(G,n-4)}) is only non-zero when $f_G(H_4,V-W) \geq 2$. Therefore we wish to find $4$-subsets of $G$ which encompass two or more non-stem vertices. Let $H = \{w,x,y,z\}$ be an arbitrary $4$-subset of $G$ which encompass two or more non-stem vertices. As $G \in \mathcal{G}_2$, each non-stem vertex has degree at most two. Furthermore, as $G$ has no isolated vertices, the vertices which $H$ encompass are either all in $T_1$ or all in $T_2$ or both. Let $S$ be the set of non-stem vertices which $H$ encompasses. We now count each $H$ in  three cases, as in the proof of Theorem~\ref{thm:d(G,n-3)}.

\vspace{5mm}
\noindent \textbf{Case 1:} \emph{$S \cap T_1 \neq \emptyset$ and $S \cap T_2 = \emptyset$}
\vspace{5mm}

\noindent As $H$  encompasses at least two non-stem vertices then, without loss of generality, let $w$ and $x$ be encompassed by $H$. Then $w,x \in T_1$ and as $G$ has no $K_2$ components then $w$ and $x$ are not adjacent. Therefore they are either leafs on the same stem, or leaves on different stems. If $w$ and $x$ are leaves on different stems then $y$ and $z$ are each stems and the only non-stems $H$ encompasses are $w$ and $x$. If $w$ and $x$ are leaves on the same stem then, without loss of generality, let $y$ be the stem adjacent to $w$ and $x$. Then $z$ is either a third leaf on $y$ or not. If $z$ is a leaf on the stem $y$ then $H$ encompasses $x$, $w$ and $z$. If $z$ is not a leaf on $y$ then $H$ encompasses only $w$ and $x$. The subgraphs which $H$ induce are shown in Figure \ref{fig:H4T1}. Dark gray vertices are stems, light gray vertices are encompassed by $H$ and dashed edges are possible edges.

\begin{figure}[!h]
\def\c{0.5}
\centering
\subfigure[]{
\scalebox{\c}{
\begin{tikzpicture}
\node[shape=circle,draw=black,fill=gray!25] (1) at (0,0) {$w$};
\node[shape=circle,draw=black,fill=gray!25] (2) at (2,0) {$x$};
\node[shape=circle,draw=black,fill=gray] (3) at (2,2) {$y$};
\node[shape=circle,draw=black,fill=gray] (4) at (0,2) {$z$};

\begin{scope}
    \path [-] (1) edge node {} (4);
    \path [-] (2) edge node {} (3);
    \path [dashed] (3) edge node {} (4);
\end{scope}
\end{tikzpicture}}}
\qquad
\subfigure[]{
\scalebox{\c}{
\begin{tikzpicture}
\node[shape=circle,draw=black,fill=gray!25] (1) at (0,0) {$w$};
\node[shape=circle,draw=black,fill=gray!25] (2) at (2,0) {$x$};
\node[shape=circle,draw=black,fill=gray] (3) at (2,2) {$y$};
\node[shape=circle,draw=black,fill=gray!25] (4) at (0,2) {$z$};

\begin{scope}
    \path [-] (1) edge node {} (3);
    \path [-] (2) edge node {} (3);
    \path [-] (3) edge node {} (4);
\end{scope}
\end{tikzpicture}}}
\qquad
\subfigure[]{
\scalebox{\c}{
\begin{tikzpicture}
\node[shape=circle,draw=black,fill=gray!25] (1) at (0,0) {$w$};
\node[shape=circle,draw=black,fill=gray!25] (2) at (2,0) {$x$};
\node[shape=circle,draw=black,fill=gray] (3) at (2,2) {$y$};
\node[shape=circle,draw=black,fill=white] (4) at (0,2) {$z$};

\begin{scope}
    \path [-] (1) edge node {} (3);
    \path [-] (2) edge node {} (3);
    \path [dashed] (3) edge node {} (4);
\end{scope}
\end{tikzpicture}}}
\caption{Every $4$-subset which encompasses two or more vertices, all of which are in $T_1$}%
\label{fig:H4T1}%
\end{figure}
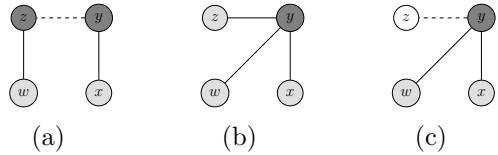

Now we need only to sum $f_G(H,V-W)-1$ for each such $4$-subset. We will sum each $f_G(H,V-W)-1$ by evaluating $f_G(H,V-W)-1$ for each case then multiplying it by the number of times it occurs in $G$.

If $H$ encompasses two leaves on different stems (Figure \ref{fig:H4T1} $(a)$) then $f_G(H,V-W)-1=1$. This $4$-subset will occur $\sum_{i=1}^{\omega} \frac{|S_i|}{2}(|T_1|-|S_i|)$ times. If $H$ encompasses three leaves on the same stem (Figure \ref{fig:H4T1} $(b)$) then $f_G(H,V-W)-1=2$. This $4$-subset will occur $\sum_{i=1}^{\omega} {|S_i| \choose 3}$ times. If $H$ encompasses two leaves on the same stem and another vertex which is not on that stem (Figure \ref{fig:H4T1} $(c)$) then $f_G(H,V-W)-1=1$. This $4$-subset will occur $\sum_{i=1}^{\omega} {|S_i| \choose 2}(n-|S_i|-1)$ times. Taking the sum of each of the cases gives us $\alpha_1$.

\vspace{5mm}
\noindent \textbf{Case 2:} \emph{$S \cap T_1 \neq \emptyset$ and $S \cap T_2 \neq \emptyset$}
\vspace{5mm}

\noindent As $H$ encompasses at least one vertex in $T_2$ and at least one vertex in $T_1$ then without loss of generality let $x$ be encompassed by $H$ and $x \in T_2$ where $N(x)=\{w,y\}$. As $x \in T_2$ then $x$ is not a stem. Therefore $w$ and $y$ are not leaves and hence not in $T_1$. As $H$ must encompass at least one vertex in $T_1$, $z \in T_1$. Without loss of generality let $N(z)=\{y\}$. Note that the vertices of $H$ are uniquely determined by the neighbourhoods of $x$ and $z$. Furthermore $y$ must be a stem and $w$ can either be a stem, in $T_2$ and encompassed by $H$, or in $T_2$ and not encompassed by $H$. Each case induces a subgraph shown in Figure \ref{fig:H4T12}. Dark gray vertices are stems, light gray vertices are encompassed by $H$ and dashed edges are possible edges.

\begin{figure}[!h]
\def\c{0.5}
\centering
\subfigure[]{
\scalebox{\c}{
\begin{tikzpicture}
\node[shape=circle,draw=black,fill=gray] (1) at (0,0) {$w$};
\node[shape=circle,draw=black,fill=gray!25] (2) at (2,0) {$x$};
\node[shape=circle,draw=black,fill=gray] (3) at (2,2) {$y$};
\node[shape=circle,draw=black,fill=gray!25] (4) at (0,2) {$z$};

\begin{scope}
    \path [-] (1) edge node {} (2);
    \path [-] (2) edge node {} (3);
    \path [-] (3) edge node {} (4);
    \path [dashed] (1) edge node {} (3);
\end{scope}
\end{tikzpicture}}}
\qquad
\subfigure[]{
\scalebox{\c}{
\begin{tikzpicture}
\node[shape=circle,draw=black,fill=gray!25] (1) at (0,0) {$w$};
\node[shape=circle,draw=black,fill=gray!25] (2) at (2,0) {$x$};
\node[shape=circle,draw=black,fill=gray] (3) at (2,2) {$y$};
\node[shape=circle,draw=black,fill=gray!25] (4) at (0,2) {$z$};

\begin{scope}
    \path [-] (1) edge node {} (2);
    \path [-] (2) edge node {} (3);
    \path [-] (3) edge node {} (4);
    \path [-] (1) edge node {} (3);
\end{scope}
\end{tikzpicture}}}
\qquad
\subfigure[]{
\scalebox{\c}{
\begin{tikzpicture}
\node[shape=circle,draw=black,fill=white] (1) at (0,0) {$w$};
\node[shape=circle,draw=black,fill=gray!25] (2) at (2,0) {$x$};
\node[shape=circle,draw=black,fill=gray] (3) at (2,2) {$y$};
\node[shape=circle,draw=black,fill=gray!25] (4) at (0,2) {$z$};

\begin{scope}
    \path [-] (1) edge node {} (2);
    \path [-] (2) edge node {} (3);
    \path [-] (3) edge node {} (4);
\end{scope}
\end{tikzpicture}}}
\caption{Every $4$-subset which encompasses at least one vertex from $T_1$ and at least one vertex from $T_2$}%
\label{fig:H4T12}%
\end{figure}
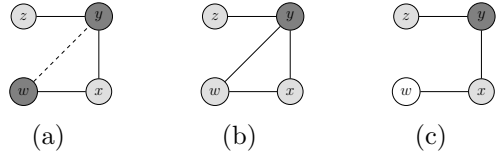

If $w$ is a stem (Figure \ref{fig:H4T12} $(a)$) then $f_G(H,V-W)-1=1$ and $x \in V_2^{ij}$ for some stems $i$ and $j$. This $4$-subset will occur $|S_i|+|S_j|$ times for every $T_2$ vertex adjacent to stems $i$ and $j$. As the number of $T_2$ vertices adjacent to stems $i$ and $j$ is $V_2^{ij}$ then this $4$-subset will occur $\sum\limits_{i \neq j} |V_2^{ij}|(|S_i|+|S_j|)$ times.

If $w$ is not a stem (Figure \ref{fig:H4T12} $(b)$ and $(c)$) then $x \in V_1^i$ and $z \in S_i$ for some stem $i$. As $H$ is uniquely determined by the closed neighbourhoods of $x$ and $z$ then we can count these by choosing one vertex from $V_1^i$ and one vertex from $S_i$ for each stem $i$. This gives us the term $\sum\limits_{i=1}^{\omega} |V_1^{i}||S_i|$, and by it, the subgraph of the type in Figure \ref{fig:H4T12} $(b)$ will be counted twice for each instance in $G$ and subgraph in Figure \ref{fig:H4T12} $(c)$ will be counted once for each instance in $G$. However, that is exactly equal to $f_G(H,V-W)-1$ for each of these cases. Hence $\sum\limits_{i=1}^{\omega} |V_1^{i}||S_i|$ is equal to $f_G(H,V-W)-1$ multiplied by the number of times it occurs in $G$.

Taking the sum of terms for when $w$ is a stem and when $w$ is not a stem gives us $\alpha_2$.

\vspace{5mm}
\noindent \textbf{Case 3:} \emph{$S \cap T_1 = \emptyset$ and $S \cap T_2 \neq \emptyset$}
\vspace{5mm}

\noindent We will generate every possible such subgraph by first constructing the induced subgraphs of $4$-subsets which encompass at least one degree two vertex. Clearly the smallest (fewest edges) such subgraph is $P_3 \cup K_1$ as shown in Figure \ref{fig:H4T2} $(a)$. We can then construct the other such subgraphs by adding every combination of the four omitted edges and removing any isomorphisms. This generates the seven other subgraphs shown in Figure \ref{fig:H4T2}.

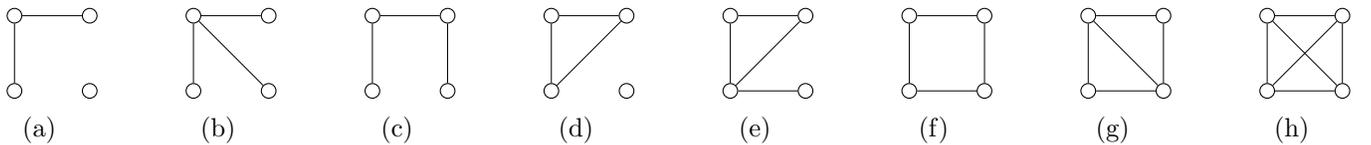
\begin{figure}[!h]
\def\c{0.5}
\centering
\subfigure[]{
\scalebox{\c}{
\begin{tikzpicture}
\begin{scope}[every node/.style={circle,thick,draw}]
    \node (1) at (0,0) {};
    \node (2) at (0,2) {};
    \node (3) at (2,2) {};
    \node (4) at (2,0) {};
\end{scope}

\begin{scope}
    \path [-] (1) edge node {} (2);
    \path [-] (2) edge node {} (3);
\end{scope}
\end{tikzpicture}}}
\qquad
\subfigure[]{
\scalebox{\c}{
 \begin{tikzpicture}
\begin{scope}[every node/.style={circle,thick,draw}]
    \node (1) at (0,0) {};
    \node (2) at (0,2) {};
    \node (3) at (2,2) {};
    \node (4) at (2,0) {};
\end{scope}

\begin{scope}
    \path [-] (1) edge node {} (2);
    \path [-] (2) edge node {} (3);
    \path [-] (2) edge node {} (4);
\end{scope}
\end{tikzpicture}}}
\qquad
\subfigure[]{
\scalebox{\c}{
\begin{tikzpicture}
\begin{scope}[every node/.style={circle,thick,draw}]
    \node (1) at (0,0) {};
    \node (2) at (0,2) {};
    \node (3) at (2,2) {};
    \node (4) at (2,0) {};
\end{scope}

\begin{scope}
    \path [-] (1) edge node {} (2);
    \path [-] (2) edge node {} (3);
    \path [-] (3) edge node {} (4);
\end{scope}
\end{tikzpicture}}}
\qquad
\subfigure[]{
\scalebox{\c}{
\begin{tikzpicture}
\begin{scope}[every node/.style={circle,thick,draw}]
    \node (1) at (0,0) {};
    \node (2) at (0,2) {};
    \node (3) at (2,2) {};
    \node (4) at (2,0) {};
\end{scope}

\begin{scope}
    \path [-] (1) edge node {} (2);
    \path [-] (2) edge node {} (3);
    \path [-] (3) edge node {} (1);
\end{scope}
\end{tikzpicture}}}
\qquad
\subfigure[]{
\scalebox{\c}{
\begin{tikzpicture}
\begin{scope}[every node/.style={circle,thick,draw}]
    \node (1) at (0,0) {};
    \node (2) at (0,2) {};
    \node (3) at (2,2) {};
    \node (4) at (2,0) {};
\end{scope}

\begin{scope}
    \path [-] (1) edge node {} (2);
    \path [-] (2) edge node {} (3);
    \path [-] (3) edge node {} (1);
    \path [-] (1) edge node {} (4);
\end{scope}
\end{tikzpicture}}}
\qquad
\subfigure[]{
\scalebox{\c}{
\begin{tikzpicture}
\begin{scope}[every node/.style={circle,thick,draw}]
    \node (1) at (0,0) {};
    \node (2) at (0,2) {};
    \node (3) at (2,2) {};
    \node (4) at (2,0) {};
\end{scope}

\begin{scope}
    \path [-] (1) edge node {} (2);
    \path [-] (2) edge node {} (3);
    \path [-] (3) edge node {} (4);
    \path [-] (4) edge node {} (1);
\end{scope}
\end{tikzpicture}}}
\qquad
\subfigure[]{
\scalebox{\c}{
\begin{tikzpicture}
\begin{scope}[every node/.style={circle,thick,draw}]
    \node (1) at (0,0) {};
    \node (2) at (0,2) {};
    \node (3) at (2,2) {};
    \node (4) at (2,0) {};
\end{scope}

\begin{scope}
    \path [-] (1) edge node {} (2);
    \path [-] (2) edge node {} (3);
    \path [-] (3) edge node {} (4);
    \path [-] (4) edge node {} (1);
    \path [-] (2) edge node {} (4);
\end{scope}
\end{tikzpicture}}}
\qquad
\subfigure[]{
\scalebox{\c}{
\begin{tikzpicture}
\begin{scope}[every node/.style={circle,thick,draw}]
    \node (1) at (0,0) {};
    \node (2) at (0,2) {};
    \node (3) at (2,2) {};
    \node (4) at (2,0) {};
\end{scope}

\begin{scope}
    \path [-] (1) edge node {} (2);
    \path [-] (2) edge node {} (3);
    \path [-] (3) edge node {} (4);
    \path [-] (4) edge node {} (1);
    \path [-] (2) edge node {} (4);
    \path [-] (1) edge node {} (3);
\end{scope}
\end{tikzpicture}}}
\caption{Every subgraph with four vertices containing at least one degree two vertex}%
\label{fig:H4T2}%
\end{figure}

We now narrow the subgraphs in Figure \ref{fig:H4T2} to subgraphs which encompass two or more vertices in $T_2$. Simply put, each subgraph must contain at least two degree two vertices which are not stems. As subgraph $(a)$ and $(b)$ only contain one vertex for degree greater than one, they do not fit our criteria. As these subgraphs are from a graph in $\mathcal{G}_2$, any vertex with degree greater than two must be a stem and hence not in $T_2$. As each vertex in subgraph $(h)$ is degree three then they are all stems and not in $T_2$. Therefore subgraph $(h)$ does not fit our criteria and we need only to consider subgraphs $(c)$, $(d)$, $(e)$, $(f)$, and $(g)$.

Of the remaining subgraphs we must consider the possibility that some degree two vertices are not in $T_2$ or are not encompassed by $H$. As each subgraph must contain at least two $T_2$ vertices, the degree two vertices in subgraphs $(c)$, $(e)$, and $(g)$ cannot be stems. Each case is shown in Figure \ref{fig:H4T2s} where stems are the dark gray vertices and the vertices in $T_2$ are in light gray.

\renewcommand\thesubfigure{(\roman{subfigure})}

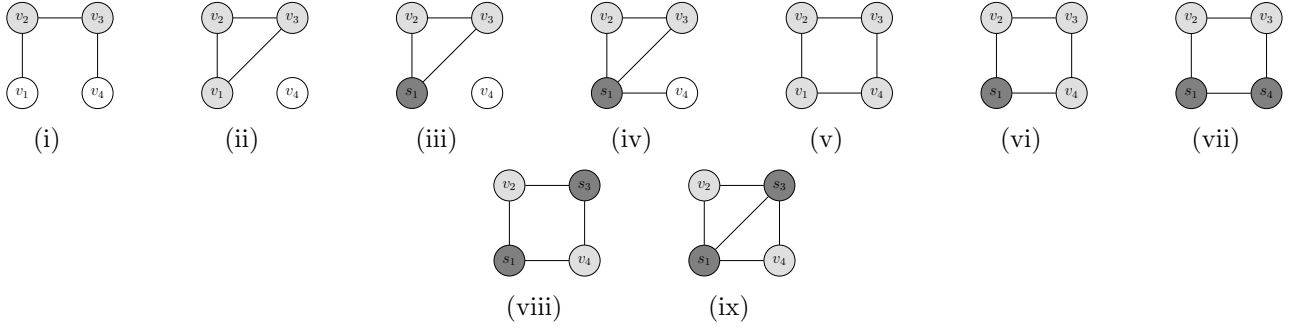
\begin{figure}[!h]
\def\c{0.50}
\centering
\subfigure[]{
\scalebox{\c}{
\begin{tikzpicture}
\node[shape=circle,draw=black,fill=white] (1) at (0,0) {$v_1$};
\node[shape=circle,draw=black,fill=gray!25] (2) at (0,2) {$v_2$};
\node[shape=circle,draw=black,fill=gray!25] (3) at (2,2) {$v_3$};
\node[shape=circle,draw=black,fill=white] (4) at (2,0) {$v_4$};

\begin{scope}
    \path [-] (1) edge node {} (2);
    \path [-] (2) edge node {} (3);
    \path [-] (3) edge node {} (4);
\end{scope}
\end{tikzpicture}}}
\qquad
\subfigure[]{
\scalebox{\c}{
 \begin{tikzpicture}
\node[shape=circle,draw=black,fill=gray!25] (1) at (0,0) {$v_1$};
\node[shape=circle,draw=black,fill=gray!25] (2) at (0,2) {$v_2$};
\node[shape=circle,draw=black,fill=gray!25] (3) at (2,2) {$v_3$};
\node[shape=circle,draw=black,fill=white] (4) at (2,0) {$v_4$};

\begin{scope}
    \path [-] (1) edge node {} (2);
    \path [-] (2) edge node {} (3);
    \path [-] (3) edge node {} (1);
\end{scope}
\end{tikzpicture}}}
\qquad
\subfigure[]{
\scalebox{\c}{
\begin{tikzpicture}
\node[shape=circle,draw=black,fill=gray] (1) at (0,0) {$s_1$};
\node[shape=circle,draw=black,fill=gray!25] (2) at (0,2) {$v_2$};
\node[shape=circle,draw=black,fill=gray!25] (3) at (2,2) {$v_3$};
\node[shape=circle,draw=black,fill=white] (4) at (2,0) {$v_4$};

\begin{scope}
    \path [-] (1) edge node {} (2);
    \path [-] (2) edge node {} (3);
    \path [-] (3) edge node {} (1);
\end{scope}
\end{tikzpicture}}}
\qquad
\subfigure[]{
\scalebox{\c}{
\begin{tikzpicture}
\node[shape=circle,draw=black,fill=gray] (1) at (0,0) {$s_1$};
\node[shape=circle,draw=black,fill=gray!25] (2) at (0,2) {$v_2$};
\node[shape=circle,draw=black,fill=gray!25] (3) at (2,2) {$v_3$};
\node[shape=circle,draw=black,fill=white] (4) at (2,0) {$v_4$};

\begin{scope}
    \path [-] (1) edge node {} (2);
    \path [-] (2) edge node {} (3);
    \path [-] (3) edge node {} (1);
    \path [-] (1) edge node {} (4);
\end{scope}
\end{tikzpicture}}}
\qquad
\subfigure[]{
\scalebox{\c}{
\begin{tikzpicture}
\node[shape=circle,draw=black,fill=gray!25] (1) at (0,0) {$v_1$};
\node[shape=circle,draw=black,fill=gray!25] (2) at (0,2) {$v_2$};
\node[shape=circle,draw=black,fill=gray!25] (3) at (2,2) {$v_3$};
\node[shape=circle,draw=black,fill=gray!25] (4) at (2,0) {$v_4$};

\begin{scope}
    \path [-] (1) edge node {} (2);
    \path [-] (2) edge node {} (3);
    \path [-] (3) edge node {} (4);
    \path [-] (4) edge node {} (1);
\end{scope}
\end{tikzpicture}}}
\qquad
\subfigure[]{
\scalebox{\c}{
\begin{tikzpicture}
\node[shape=circle,draw=black,fill=gray] (1) at (0,0) {$s_1$};
\node[shape=circle,draw=black,fill=gray!25] (2) at (0,2) {$v_2$};
\node[shape=circle,draw=black,fill=gray!25] (3) at (2,2) {$v_3$};
\node[shape=circle,draw=black,fill=gray!25] (4) at (2,0) {$v_4$};

\begin{scope}
    \path [-] (1) edge node {} (2);
    \path [-] (2) edge node {} (3);
    \path [-] (3) edge node {} (4);
    \path [-] (4) edge node {} (1);
\end{scope}
\end{tikzpicture}}}
\qquad
\subfigure[]{
\scalebox{\c}{
\begin{tikzpicture}
\node[shape=circle,draw=black,fill=gray] (1) at (0,0) {$s_1$};
\node[shape=circle,draw=black,fill=gray!25] (2) at (0,2) {$v_2$};
\node[shape=circle,draw=black,fill=gray!25] (3) at (2,2) {$v_3$};
\node[shape=circle,draw=black,fill=gray] (4) at (2,0) {$s_4$};

\begin{scope}
    \path [-] (1) edge node {} (2);
    \path [-] (2) edge node {} (3);
    \path [-] (3) edge node {} (4);
    \path [-] (4) edge node {} (1);
\end{scope}
\end{tikzpicture}}}
\qquad
\subfigure[]{
\scalebox{\c}{
\begin{tikzpicture}
\node[shape=circle,draw=black,fill=gray] (1) at (0,0) {$s_1$};
\node[shape=circle,draw=black,fill=gray!25] (2) at (0,2) {$v_2$};
\node[shape=circle,draw=black,fill=gray] (3) at (2,2) {$s_3$};
\node[shape=circle,draw=black,fill=gray!25] (4) at (2,0) {$v_4$};

\begin{scope}
    \path [-] (1) edge node {} (2);
    \path [-] (2) edge node {} (3);
    \path [-] (3) edge node {} (4);
    \path [-] (4) edge node {} (1);
\end{scope}
\end{tikzpicture}}}
\qquad
\subfigure[]{
\scalebox{\c}{
\begin{tikzpicture}
\node[shape=circle,draw=black,fill=gray] (1) at (0,0) {$s_1$};
\node[shape=circle,draw=black,fill=gray!25] (2) at (0,2) {$v_2$};
\node[shape=circle,draw=black,fill=gray] (3) at (2,2) {$s_3$};
\node[shape=circle,draw=black,fill=gray!25] (4) at (2,0) {$v_4$};

\begin{scope}
    \path [-] (1) edge node {} (2);
    \path [-] (2) edge node {} (3);
    \path [-] (3) edge node {} (4);
    \path [-] (4) edge node {} (1);
    \path [-] (1) edge node {} (3);
\end{scope}
\end{tikzpicture}}}
\caption{Every subgraph with four vertices containing two or more vertices in $T_2$}%
\label{fig:H4T2s}%
\end{figure}

\renewcommand\thesubfigure{(\alph{subfigure})}

Note that in Figure \ref{fig:H4T2s} the white vertices are not encompassed and can either be stems, $T_1$, or $T_2$ vertices. However as we are examining the case where $H$ only encompasses $T_2$ vertices then $v_1$, $v_4$ from $(i)$ and $v_4$ from $(iv)$ are not $T_1$ vertices.

Now we need only to sum $f_G(H,V-W)-1$ for each $4$-subset. We will sum each $f_G(H,V-W)-1$ by evaluating $f_G(H,V-W)-1$ for each case, then multiplying the result by the number of times the subgraph occurs in $G$. We may also group some cases for simplicity.

Cases $(i)-(vii)$ from Figure \ref{fig:H4T2s} all encompass the adjacent $T_2$ vertices $v_2$ and $v_3$. Furthermore $N[v_2] \neq N[v_3]$ in the cases $(i)$, $(v)$, $(vi)$, and $(vii)$. Therefore $|N[v_2] \cup N[v_3]|=4$ and any subset would require four vertices to encompass both $v_2$ and $v_3$. Therefore there is exactly one $4$-subset of $G$ which encompasses $v_2$ and $v_3$. As there is exactly one edge between $v_2$ and $v_3$ in $G$, we can relate the number of edges between $T_2$ vertices in $G$ and the sum of $f_G(H,V-W)-1$ for cases $(i)$, $(v)$, $(vi)$, and $(vii)$. We will count the total number edges between $T_2$ vertices in $G$ by summing half the number of $T_2$ vertices each $T_2$ vertex is adjacent to. We will then subtract the number of edges between $T_2$ vertices which have the same closed neighbourhood as there are multiple 4-subsets which encompass them. We will also adjust for cases where the number of edges between $T_2$ vertices does not equal $f_G(H,V-W)-1$.

For a graph in $\mathcal{G}_2$, the neighbours of a $T_2$ vertex are either stems or other $T_2$ vertices. Each vertex in $V_0$ is adjacent to two other $T_2$ vertices. Each vertex in $V_1^i$, for any stem $i$, is adjacent to one other $T_2$ vertex. Each vertex in $V_2^{ij}$, for any stems $i$ and $j$, is adjacent to no other $T_2$ vertices. Therefore the number of edges between $T_2$ vertices in $G$ is 

$$ \frac{1}{2}\bigg(2|V_0|+\sum\limits_{i=1}^{\omega} |V_1^i|\bigg).$$

\noindent If two adjacent $T_2$ vertices have the same closed neighbourhood then they induce a $3$-cycle. Furthermore, as at least two of the vertices of the $3$-cycle are in $T_2$, the induced $3$-cycle is either a $3$-loop or $3$-cycle component in $G$. As each $3$-loop contains one edge between $T_2$ vertices and each $3$-cycle component contains three edges between $T_2$ vertices, we subtract $|\mathcal{L}_3|+3|\mathcal{C}_3|$ from the total number of edges between $T_2$ vertices.

In cases $(i)$, $(vi)$, and $(vii)$ the number of edges between $T_2$ vertices equals $f_G(H,V-W)-1$. However, in case $(v)$, which is a $C_4$ component of $G$, $f_G(H,V-W)-1=3$ and there are 4 edges between $T_2$ vertices. Hence we must also subtract one for each $C_4$ component of $G$. Therefore the sum of $f_G(H,V-W)-1$ for cases $(i)$, $(vi)$, $(v)$, and $(vii)$ is

$$ \frac{1}{2}\bigg(2|V_0|+\sum\limits_{i=1}^{\omega} |V_1^i|\bigg)-|\mathcal{L}_3|-3|\mathcal{C}_3|-|\mathcal{C}_4|.$$

For case $(ii)$, $f_G(H,V-W)-1 = 2$. Case $(ii)$ is a $C_3$ component with any other vertex. Thus for each $C_3$ component there is $n-3$ such $4$-subsets. Therefore the number of instances of case $(ii)$ is $|\mathcal{C}_3|(n-3)$.

For cases $(iii)$ and $(iv)$, $f_G(H,V-W)-1 = 1$. Cases $(iii)$ and $(iv)$ are $3$-loops in $G$ with any other vertex which is not a $T_1$ adjacent to the stem. This is true because $H$ does not encompass any $T_1$ vertices and hence cannot contain both a stem and one of its leaves. The number of instances of the cases $(iii)$ and $(iv)$ in $G$ is $\sum_{i=1}^{\omega}|\mathcal{L}_3^i|(n-3-|S^i|)$.

For cases $(viii)$ and $(ix)$, $f_G(H,V-W)-1 = 1$. Cases $(viii)$ and $(ix)$ are two $T_2$ vertices adjacent to the same two stems. Therefore for each pair of stems $i$ and $j$, there are ${|V_2^{ij}| \choose 2}$ such $4$-subsets. Therefore the number of instances of cases $(viii)$ and $(ix)$ is $\sum_{i \neq j} {|V_2^{ij}| \choose 2}$.

The sum of $f_G(H,V-W)-1 = 1$ for cases $(i)-(ix)$ yields

$$ \frac{1}{2}\bigg(2|V_0|+\sum\limits_{i=1}^{\omega} |V_1^i|\bigg)+\sum_{i \neq j} {|V_2^{ij}| \choose 2}-|\mathcal{L}_3|-3|\mathcal{C}_3|-|\mathcal{C}_4|+2|\mathcal{C}_3|(n-3)+\sum_{i=1}^{\omega}|\mathcal{L}_3^i|(n-3-|S_i|).$$

\noindent Combining like terms and the fact $|\mathcal{L}_3| = \sum_{i=1}^{\omega}|\mathcal{L}_3^i|$ gives us $\alpha_3,$

$$|V_0|+\sum\limits_{i=1}^{\omega} \frac{|V_1^i|}{2}+\sum_{i \neq j} {|V_2^{ij}| \choose 2}-|\mathcal{C}_4|+|\mathcal{C}_3|(2n-9)+\sum_{i=1}^{\omega}|\mathcal{L}_3^i|(n-4-|S_i|).$$

\end{proof}
We now turn specifically to the coefficients of the domination polynomials of paths, with interest in the top four. 

\begin{theorem}
\label{thm:FamPathFacts}
\textnormal{\cite{2009Paths}}

\renewcommand{\labelenumi}{(\roman{enumi})}
 \begin{enumerate}
   \item For every $n \geq 2$, $d(P_n, n-1) = n$.
   \item For every $n \geq 3$, $d(P_n, n-2) = {n \choose 2} - 2$.
   \item For every $n \geq 4$, $d(P_n, n-3) = {n \choose 3} - (3n-8)$.
   \item For every $n \geq 5$, $d(P_n, n-4) = {n \choose 4} - (2n^2 -13n+20)$.
 \end{enumerate}
\QED
\end{theorem}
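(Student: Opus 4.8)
The plan is to treat the four parts as successive specializations of the general coefficient formulas established in Section~\ref{sec:Coeff}, since a path is about the simplest graph in $\mathcal{G}_2$. First I would record the relevant parameters of $P_n = v_1 v_2 \cdots v_n$. For $n \geq 3$ the path is connected with at least three vertices, so it has no isolated vertices and no $K_2$ component, and since every vertex has degree at most two we have $P_n \in \mathcal{G}_2$. Its only leaves are the two endpoints, so $r = 0$ and $t = |T_1| = 2$; its stems are $v_2$ and $v_{n-1}$, giving $\omega = 2$ with $|S_1| = |S_2| = 1$ for $n \geq 4$; the remaining degree-two vertices $v_3, \ldots, v_{n-2}$ constitute $T_2$, so $|T_2| = n-4$; and $P_n$ being acyclic forces $|\mathcal{L}_3| = |\mathcal{C}_3| = |\mathcal{C}_4| = 0$.

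Parts (i) and (ii) are then immediate from Theorem~\ref{thm:DomPolyGeo}: with $r = 0$ we get $d(P_n, n-1) = n$, and with $t = 2$ (and no isolated or $K_2$ components) we get $d(P_n, n-2) = \binom{n}{2} - 2$.

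For part (iii) I would substitute the parameters into Theorem~\ref{thm:d(G,n-3)}. Since $|S_i| = 1$ kills the $\binom{|S_i|}{2}$ term and all the loop and cycle terms vanish, the bracketed quantity collapses to $|T_1|(n-2) + |T_2| = 2(n-2) + (n-4) = 3n-8$, which gives the claim. For part (iv) the analogous substitution into Theorem~\ref{thm:d(G,n-4)} is the genuine computation. The one step requiring real care is the partition of $T_2$ by adjacent stems: I would check that $v_3$ and $v_{n-2}$ each lie in one of the $V_1^i$ while the interior vertices $v_4, \ldots, v_{n-3}$ lie in $V_0$, so that $|V_0| = n-6$, $\sum_i |V_1^i| = 2$, and every $V_2^{ij} = \emptyset$. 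Feeding $|S_i| = 1$ into the $\alpha$'s then yields $\alpha_1 = 1$, $\alpha_2 = 2$, and $\alpha_3 = |V_0| + \tfrac{1}{2}\sum_i |V_1^i| = n-5$, whence $2\binom{n-2}{2} + (n-4)(n-3) - (\alpha_1+\alpha_2+\alpha_3) = 2n^2 - 13n + 20$, as required.

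The main obstacle, and the only place I expect to slow down, is the small-$n$ boundary. The identity of Lemma~\ref{lem:d(G,n-k)}, and hence Theorems~\ref{thm:d(G,n-3)} and~\ref{thm:d(G,n-4)}, is derived under $k \leq n - \gamma(G)$, which fails for $P_4$ at $k=3$ and for $P_5$ at $k=4$; the latter is also exactly where the $T_2$-partition degenerates, since in $P_5$ the single $T_2$ vertex $v_3$ is adjacent to both stems and so falls into $V_2$ rather than into some $V_1^i$. I would dispose of these two exceptional paths directly: both $d(P_4,1)$ and $d(P_5,1)$ equal $0$ (their exponents lie below $\gamma$), and the claimed polynomials $\binom{4}{3} - (3\cdot 4 - 8)$ and $\binom{5}{4} - (2\cdot 25 - 13\cdot 5 + 20)$ also evaluate to $0$. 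For every remaining $n$ the hypotheses hold and the partition follows the regular pattern (with $|V_0| = n-6 \geq 0$), so the substitutions above complete the proof.
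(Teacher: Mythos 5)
Your proposal is correct, but it takes a genuinely different route from the paper, which in fact contains no proof of Theorem~\ref{thm:FamPathFacts} at all: the statement is simply imported from \cite{2009Paths}, where these coefficients are established directly for paths. You instead rederive all four formulas internally by specializing the paper's own general machinery --- Theorem~\ref{thm:DomPolyGeo} for (i) and (ii), and Theorems~\ref{thm:d(G,n-3)} and~\ref{thm:d(G,n-4)} for (iii) and (iv) --- to $P_n$. Your bookkeeping is right ($|T_1|=2$, $|T_2|=n-4$, $|S_1|=|S_2|=1$, all loop and cycle terms vanish, and for $n\geq 6$ the partition gives $|V_0|=n-6$, $\sum_i|V_1^i|=2$, $V_2^{ij}=\emptyset$), the arithmetic checks out ($\alpha_1+\alpha_2+\alpha_3=n-2$, so the bracket equals $(n-2)(n-3)+(n-4)(n-3)-(n-2)=2n^2-13n+20$), and there is no circularity, since Theorems~\ref{thm:d(G,n-3)} and~\ref{thm:d(G,n-4)} are proved independently of this theorem. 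You were also right to isolate the genuine boundary issue: Lemma~\ref{lem:d(G,n-k)} is only stated for $k\leq n-\gamma(G)$, which fails for $P_4$ at $k=3$ and $P_5$ at $k=4$ (the latter also being where the stem-adjacency partition degenerates, as $v_3\in V_2$), and your direct check that both sides vanish there closes that gap; a one-line remark that Theorem~\ref{thm:DomPolyGeo}(i) applies to $P_2$ even though it is a $K_2$ (part (i) of that theorem needs no such hypothesis) would make the $n=2$ case of (i) airtight as well. As for what each approach buys: the paper's citation is shorter and keeps Section~\ref{sec:Coeff} focused on the new general formulas, whereas your route makes the result self-contained and doubles as a consistency check of Theorems~\ref{thm:d(G,n-3)} and~\ref{thm:d(G,n-4)} against known path values --- running exactly in reverse the direction in which the paper later combines these facts in Lemma~\ref{lem:EqPUCk}.
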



We will also need to know when $-2$ is root, as this plays a role in our characterization of graphs that are domination equivalent to paths. 
The domination polynomial is multiplicative across components (that is, $D(G_1 \cup \cdots G_m,x) = D(G_1,x)D(G_2,x)  \cdots D(G_m,x)$) and $D(K_2,-2)=0$. Therefore, $D(G,-2) \neq 0$ implies $G$ has no $K_2$ components. This observation is vital to proving the domination equivalence classes of paths.

It is well known (and easy to see \cite{2014Intro}) that $P_n$ satisfies the recurrence
$$D(G_n,x) = x(D(G_{n-1},x)+D(G_{n-2},x)+D(G_{n-3},x))$$
for $n \geq 3$ (other families, such as the cycles $C_{n}$, do as well). 
We show that for $n \geq 9$, $-2$ is {\em never} a root of $D(P_m,x)$, by showing that, given a sequence of graphs satisfying such a recurrence if the magnitude of $D(G_i,-2)$ is non-zero, increasing and of alternating sign for the four consecutive integers $i=N, N+1, N+2, N+3$, then $D(G_m,-2) \neq 0$ for $m \geq N$. This allows us to show that any $G \sim_{\mathcal{D}} G_m$ does not have any $K_2$ components, since $D(K_2,-2) = 0$, if $G$ had a $K_2$ component then $D(G,-2) = 0$ as well.

\begin{lemma}
\label{lem:gnminus2neq0}

Fix $k \geq 1$. Suppose we have a sequence of graphs $(G_n)_{n \geq 1}$ that satisfies the recurrence
$$D(G_n,x) = x(D(G_{n-1},x)+D(G_{n-2},x)+D(G_{n-3},x))$$
for $n \geq 3$.
If for some $N \in \mathbb{N}$
$$0<|D(G_{N},-2)| < |D(G_{N+1},-2)| < |D(G_{N+2},-2)| < |D(G_{N+3},-2)|$$
and $D(G_{N},-2)$, $D(G_{N+1},-2)$, $D(G_{N+2},-2)$, $D(G_{N+3},-2)$ have alternating sign, then $D(G_m,-2) \neq 0$ for $m \geq N$.
\end{lemma}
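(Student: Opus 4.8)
Set $a_n = D(G_n,-2)$. Evaluating the recurrence at $x=-2$ turns it into the purely numerical relation $a_n = -2(a_{n-1}+a_{n-2}+a_{n-3})$ for $n \ge 3$, and the hypothesis says that the four consecutive terms $a_N,\dots,a_{N+3}$ are nonzero, alternate in sign, and have strictly increasing absolute values. The plan is to prove by induction on $n \ge N+3$ the statement $R(n)$: the terms $a_N,a_{N+1},\dots,a_n$ are all nonzero, alternate in sign, and satisfy $|a_N| < |a_{N+1}| < \cdots < |a_n|$. The base case $R(N+3)$ is exactly the hypothesis, and once $R(n)$ is established for every $n$ we immediately obtain $a_m = D(G_m,-2) \neq 0$ for all $m \ge N$ (the indices $m=N,\dots,N+3$ being already covered by the hypothesis), which is the conclusion.

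For the inductive step, write $b_j = |a_j|$ and let $\sigma$ denote the sign of $a_{n-2}$, so that by $R(n)$ the signs of $a_{n-2},a_{n-1},a_n$ are $\sigma,-\sigma,\sigma$. Substituting into the recurrence gives $a_{n+1} = -2\sigma(b_n - b_{n-1} + b_{n-2})$. First I would dispose of the sign and nonvanishing: since $b_n > b_{n-1}$ by $R(n)$ and $b_{n-2}>0$, the quantity $b_n - b_{n-1} + b_{n-2}$ is strictly positive, so $a_{n+1}\neq 0$ and $a_{n+1}$ has sign $-\sigma$, opposite to that of $a_n$; this extends the alternation and records that $b_{n+1} = 2(b_n - b_{n-1} + b_{n-2})$.

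The hard part is the strict growth $b_{n+1} > b_n$, because the raw inequality it reduces to, $b_n + 2b_{n-2} > 2b_{n-1}$, does \emph{not} follow from monotonicity alone. The key idea is to feed the recurrence back into itself one more time. Since $n \ge N+3$, the index $n-3$ is still at least $N$, so $R(n)$ also supplies the sign pattern of $a_{n-3},a_{n-2},a_{n-1}$ together with $b_{n-1}>b_{n-2}$; running the same computation for $a_n$ yields $b_n = 2(b_{n-1} - b_{n-2} + b_{n-3})$. Substituting this into $b_{n+1}-b_n = b_n - 2b_{n-1} + 2b_{n-2}$ collapses everything to the clean identity $b_{n+1}-b_n = 2b_{n-3}$. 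As $b_{n-3}\ge b_N>0$, the increment is strictly positive, giving $b_{n+1}>b_n$ and completing $R(n+1)$.

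I expect the only delicate points to be bookkeeping rather than analysis: keeping the sign $\sigma$ consistent across the two applications of the recurrence, and verifying that the window of the induction hypothesis reaches back exactly to index $n-3\ge N$ (which is precisely why the base case must involve four consecutive terms, not three). Everything hinges on the identity $b_{n+1}-b_n = 2b_{n-3}$; once it is in hand, positivity of the increments—and hence the nonvanishing of every $a_m$ for $m\ge N$—is automatic. (The parameter $k$ in the statement plays no role in this three-term version and can be ignored.)
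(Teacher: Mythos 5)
Your proof is correct and follows essentially the same route as the paper's: induction on the claim that the values $D(G_n,-2)$ alternate in sign and strictly increase in magnitude, using the recurrence at $x=-2$ once (at index $n+1$) to get the sign and nonvanishing of the new term, and a second time (at index $n$, which is where the four-term window $n-3 \geq N$ is needed) to establish the magnitude growth. The only difference is cosmetic but pleasant: where the paper discards the term $-2D(G_{n-3},-2)$ to obtain an inequality chain, you keep it and derive the exact increment $|D(G_{n+1},-2)|-|D(G_{n},-2)| = 2|D(G_{n-3},-2)| > 0$, which makes the strict growth immediate.
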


\begin{proof}
Substituting $x=-2$ into the recurrence , we find that
$$D(G_n,-2) = -2(D(G_{n-1},-2)+D(G_{n-2},-2)+D(G_{n-3},-2)).$$
By induction we will show the magnitude of $D(G_n,-2)$ is increasing in absolute value and alternating in sign for all $n \geq N+3$. As $|D(G_{N},-2)|>0$ then this will imply $D(G_n,-2) \neq 0$ for $m \geq N$. By the hypotheses, the result is true for $n = N$.

Suppose for some $k \geq N$, $D(G_{N+3},-2), \ldots, D(G_{k},-2)$ alternate in signs and increase in absolute value. Then we will first show $D(G_{k+1},-2)$ has opposite sign to $D(G_{k},-2)$. First assume $D(G_{k},-2)>0$ (a similar argument holds when $D(G_{k},-2)<0$). Then $D(G_{k-1},-2) < 0$ and $D(G_{k-2},-2) > 0$. By our induction assumption, the magnitude $D(G_{m},-2)$ is strictly increasing for $N+3 \leq m \leq k$. Therefore

$$ D(G_{k},-2) + D(G_{k-1},-2) + D(G_{k-2},-2) > 0$$.

\noindent When we multiply the left side of the above inequality by $-2$, from the recurrence relation for $D(G_k,-2)$ we will obtain $D(G_{k+1},-2)$. The signs continue to alternate.

We now show $|D(G_{k+1},-2)| > |D(G_{k},-2)|$. We consider the two cases: \\ $D(G_{k},-2)>0$ and $D(G_{k},-2)<0$.

If $D(G_{k},-2)>0$ then $D(G_{k-1},-2) < 0$, $D(G_{k-2},-2) > 0$, and $D(G_{k-3},-2) < 0$. By our induction assumption, the magnitude $D(G_{m},-2)$ is strictly increasing. Therefore

$$ D(G_{k-1},-2) + D(G_{k-2},-2) + D(G_{k-3},-2) < D(G_{k-1},-2) + D(G_{k-2},-2) < 0$$.

\noindent By the recurrence relation for $D(G_k,-2)$ we deduce

\begin{center}
\begin{tabular}{ l c l }
 $D(G_{k},-2)$ & $=$ & $-2(D(G_{k-1},-2)+D(G_{k-2},-2)+D(G_{k-3},-2))$ \\ 
 				 & $>$ & $-2(D(G_{k-1},-2)+D(G_{k-2},-2))$ \\
\end{tabular}
\end{center}

\noindent As $D(G_{k+1},-2)<0$ then

\begin{center}
\begin{tabular}{ l c l }
$|D(G_{k+1},-2)|$ & $=$ & $-D(G_{k+1},-2)$ \\
			     & $=$ & $-(-2(D(G_{k},-2)+D(G_{k-1},-2)+D(G_{k-2},-2)))$ \\ 
 				 & $=$ & $2D(G_{k},-2)+2D(G_{k-1},-2)+2D(G_{k-2},-2)$ \\
 				 & $>$ & $D(G_{k},-2)-2(D(G_{k-1},-2)+D(G_{k-2},-2))$ \\
 				 &     & $+2D(G_{k-1},-2)+2D(G_{k-2},-2)$ \\
 				 & $=$ & $D(G_{k},-2)$ \\
 				 & $=$ & $|D(G_{k},-2)|$ \\
\end{tabular}
\end{center}
Therefore $|D(G_{k+1},-2)|> |D(G_{k},-2)|$ and our claim is true. A similar argument holds when $D(G_k,-2)<0$.
\end{proof}

Using the base cases $D(P_1,x) = x$, $D(P_1,x) = x^2+2x$ and $D(P_3,x) = x^3+3x^2+x$, calculations will show that $D(P_{i},-2) \neq 0$ for $9 \leq i \leq 12$, $D(P_{13},-2) = -32$, $D(P_{14},-2) = 64$ and $D(P_{15},-2) = -96$. From this and the previous Lemma, we conclude:

\begin{corollary}
\label{pnminus2neq0}
If $n \geq 9$, $-2$ is not a zero of $D(P_n,x)$.
\QED
\end{corollary}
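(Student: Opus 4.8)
The plan is to invoke Lemma~\ref{lem:gnminus2neq0} with the sequence $G_n = P_n$, which satisfies the required three-term recurrence $D(P_n,x) = x\bigl(D(P_{n-1},x)+D(P_{n-2},x)+D(P_{n-3},x)\bigr)$ for $n \geq 3$, as recorded just above the lemma. To apply the lemma I need to exhibit a single index $N$ for which the four consecutive values $D(P_N,-2),\ldots,D(P_{N+3},-2)$ are nonzero, strictly increasing in absolute value, and alternating in sign; the lemma then hands me $D(P_m,-2) \neq 0$ for all $m \geq N$.

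First I would set $x=-2$ in the recurrence to obtain $D(P_n,-2) = -2\bigl(D(P_{n-1},-2)+D(P_{n-2},-2)+D(P_{n-3},-2)\bigr)$ and seed it with the base values $D(P_1,-2)=-2$, $D(P_2,-2)=0$, $D(P_3,-2)=2$ coming from the explicit polynomials $D(P_1,x)=x$, $D(P_2,x)=x^2+2x$, $D(P_3,x)=x^3+3x^2+x$. Iterating the recurrence produces the integer sequence $D(P_n,-2)$, and I would carry it out to $n=15$. The key output is that $D(P_{12},-2)=16$, $D(P_{13},-2)=-32$, $D(P_{14},-2)=64$, and $D(P_{15},-2)=-96$, which are nonzero, alternate in sign, and satisfy $16<32<64<96$ in absolute value. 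Thus $N=12$ meets every hypothesis of Lemma~\ref{lem:gnminus2neq0}, yielding $D(P_m,-2)\neq 0$ for all $m\geq 12$.

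It remains only to treat the finitely many indices $n=9,10,11$ not covered by the lemma, and I would dispose of these by the same iteration, which gives $D(P_9,-2)=-8$, $D(P_{10},-2)=16$, and $D(P_{11},-2)=-16$, each nonzero. Combining this with the conclusion from the lemma shows $D(P_n,-2)\neq 0$, i.e. $-2$ is not a zero of $D(P_n,x)$, for every $n\geq 9$.

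Since every step is a finite, mechanical computation feeding into an already-proved lemma, there is no genuine obstacle here; the only point requiring a little care is the sign bookkeeping when running the recurrence, together with the choice of a window $N,\ldots,N+3$ late enough that the magnitudes have become strictly increasing. Indeed the earlier values, such as $D(P_7,-2)=D(P_8,-2)=0$, show that an earlier window would violate the ``nonzero and strictly increasing'' requirement, so starting at $N=12$ is what makes the hypotheses of the lemma hold.
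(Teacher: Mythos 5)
Your proposal is correct and follows essentially the same route as the paper: both compute the sequence $D(P_n,-2)$ from the recurrence with the base values $D(P_1,-2)=-2$, $D(P_2,-2)=0$, $D(P_3,-2)=2$, verify the cases $n=9,10,11$ directly, and apply Lemma~\ref{lem:gnminus2neq0} to the window $D(P_{12},-2)=16$, $D(P_{13},-2)=-32$, $D(P_{14},-2)=64$, $D(P_{15},-2)=-96$. Your write-up is in fact slightly more explicit than the paper's, which states the needed values without spelling out that $N=12$ is the index at which the lemma's hypotheses are invoked.
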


\section{Equivalence Classes of Paths}
\label{sec:EqPath}

We have done the necessary background work to proceed onto our characterization of those graphs that are domination equivalent to path $P_{n}$. We remark that are proof is considerably more involved  

%
We first observe that any graph $G \sim_{\mathcal{D}} P_n$ does not have any $4$-cycle components. This follows from the multiplicativity of the domination polynomial over components and the following two lemma.

%
%

\begin{lemma}
\label{lem:Cn-1}

\textnormal{\cite{2014EqCycleAll}} If $n$ is a positive integer, then

$$
D(C_n,-1) = \left\{
        \begin{array}{rl}
            3 & \quad n \equiv 0 $ mod $4 \\
            -1 & \quad $otherwise$ \\
        \end{array}
    \right.
$$
\QED
\end{lemma}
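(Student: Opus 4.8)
The plan is to exploit the fact, recalled just before this lemma, that the cycles $C_n$ satisfy the same three-term recurrence as the paths, and then reduce the whole problem to a purely arithmetic induction. Writing $a_n := D(C_n,-1)$ and substituting $x=-1$ into
$$D(C_n,x)=x\big(D(C_{n-1},x)+D(C_{n-2},x)+D(C_{n-3},x)\big)$$
yields the integer recurrence
$$a_n = -\big(a_{n-1}+a_{n-2}+a_{n-3}\big),$$
valid on the range where the cycle recurrence applies; to be safe I would take $n\ge 6$, so that every subscript on the right still refers to a genuine cycle $C_m$ with $m\ge 3$.

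First I would pin down three base cases by computing the relevant domination polynomials directly. A short count gives $D(C_3,x)=x^3+3x^2+3x$, $D(C_4,x)=x^4+4x^3+6x^2$, and $D(C_5,x)=x^5+5x^4+10x^3+5x^2$, so that $a_3=-1$, $a_4=3$, and $a_5=-1$. Next I would introduce the closed-form candidate $b_n$, defined to equal $3$ when $n\equiv 0 \pmod 4$ and $-1$ otherwise, and verify by a four-way check on the residue of $n$ modulo $4$ that $b_n=-\big(b_{n-1}+b_{n-2}+b_{n-3}\big)$. For example, when $n\equiv 0$ the three predecessors have residues $3,2,1$, so the right-hand side is $-(-1-1-1)=3=b_n$; when $n\equiv 1$ the predecessors have residues $0,3,2$, giving $-(3-1-1)=-1=b_n$; and the residues $2$ and $3$ are handled identically.

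Since $a_n$ and $b_n$ obey the same recurrence and agree at $n=3,4,5$, a straightforward strong induction then gives $a_n=b_n$ for all $n\ge 3$, which is exactly the asserted formula. The main obstacle, such as it is, is not the induction itself but the bookkeeping at the boundary: I must confirm that the cycle recurrence is genuinely valid from the index at which I begin applying it, since an off-by-one in the starting index would invalidate the first inductive step. Once the recurrence's range of validity is checked against the directly computed values $a_3,a_4,a_5$, and those three polynomials are confirmed correct, the remainder is entirely routine.
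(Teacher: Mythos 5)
Your proof is correct, but note that the paper itself does not prove this lemma at all: the statement is imported directly from Akbari and Oboudi \cite{2014EqCycleAll}, which is why it appears with a citation and no argument. Your proposal therefore supplies a self-contained proof where the paper has none, and it holds up: the three-term recurrence $D(C_n,x)=x\big(D(C_{n-1},x)+D(C_{n-2},x)+D(C_{n-3},x)\big)$ does hold for cycles (the paper asserts this just before Lemma \ref{lem:gnminus2neq0}), and your restriction to $n\ge 6$ correctly sidesteps any convention issues about $C_1$ and $C_2$ while keeping all three predecessors genuine cycles. Your base polynomials $D(C_3,x)=x^3+3x^2+3x$, $D(C_4,x)=x^4+4x^3+6x^2$, $D(C_5,x)=x^5+5x^4+10x^3+5x^2$ are right, giving $a_3=-1$, $a_4=3$, $a_5=-1$, and the four-way residue check that $b_n=-\big(b_{n-1}+b_{n-2}+b_{n-3}\big)$ is exactly what the strong induction needs. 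It is worth observing that this is the same style of argument the paper deploys at $x=-2$ (Lemma \ref{lem:gnminus2neq0} and Corollary \ref{pnminus2neq0}): substitute a fixed value into the shared recurrence and control the resulting integer sequence --- except that at $x=-1$ the sequence is periodic with period $4$, so periodicity replaces the growth-in-magnitude argument. The one cosmetic mismatch is that the lemma as stated says ``$n$ a positive integer'' while your proof (quite reasonably) covers $n\ge 3$, which is the only range in which the lemma is ever applied in the paper.
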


\begin{lemma}
\label{lem:FamForestn1}

\textnormal{\cite{2013n1}} Let $F$ be a forest. Then $D(F,-1) \in \{1,-1\}$ and therefore $D(P_n,-1) \in \{1,-1\}$.
\QED
\end{lemma}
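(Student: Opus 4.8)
The plan is to prove the statement for an arbitrary forest $F$ by strong induction on $|V(F)|$, extracting along the way a clean leaf-recurrence for the single value $D(F,-1)$. Since $P_n$ is a forest, the second assertion follows at once. One could alternatively invoke multiplicativity of $D$ over components to reduce to trees, but the recurrence below works directly on forests, so this reduction is unnecessary.

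First I would dispose of the base case. If $F$ has no edges it is a union of $m$ isolated vertices; each such vertex lies in every dominating set, so the unique dominating set is $V(F)$ and $D(F,x)=x^m$, giving $D(F,-1)=(-1)^m\in\{1,-1\}$ (with the convention $D(\emptyset,x)=1$ for the empty forest). For the inductive step, suppose $F$ has at least one edge and fix a leaf $u$ with its stem $w$. I would establish the recurrence
$$D(F,-1) = -\,D(F-\{u,w\},-1),$$
by splitting the alternating sum $D(F,-1)=\sum_{S}(-1)^{|S|}$, taken over dominating sets $S$, according to whether $w\in S$. For the sets with $w\in S$, the map $S\mapsto S\mathbin{\triangle}\{u\}$ that toggles the leaf $u$ is a parity-reversing involution which preserves both membership of $w$ and the property of being dominating: adjoining $u$ to a dominating set keeps it dominating, while deleting $u$ from a dominating set that contains $w$ leaves a dominating set, since $u$ is then still dominated by $w$ and the deletion of $u$ affects no other vertex. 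Hence these terms cancel in pairs and contribute $0$. For the sets with $w\notin S$, the only neighbour of $u$ is $w$, so $u$ must dominate itself, forcing $u\in S$; moreover every other neighbour of $w$ must now be dominated without help from $w$. This identifies such $S$ exactly with the sets $\{u\}\cup S''$ for $S''$ a dominating set of $F-\{u,w\}$, where any isolated vertex of $F-\{u,w\}$ created when $w$ carried several leaves is automatically forced into $S''$, matching the requirement that it be dominated in $F$. Summing $(-1)^{|S''|+1}$ over these sets yields $-D(F-\{u,w\},-1)$, which is the recurrence. As $F-\{u,w\}$ is a forest with two fewer vertices, the induction hypothesis gives $D(F-\{u,w\},-1)\in\{1,-1\}$, and therefore $D(F,-1)\in\{1,-1\}$.

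I expect the one step needing genuine care to be the correspondence for the sets avoiding $w$: one must verify that deleting $w$ neither silently re-dominates nor un-dominates any vertex, i.e. that a vertex of $F-\{u,w\}$ is dominated by $S''$ in $F-\{u,w\}$ if and only if it is dominated by $\{u\}\cup S''$ in $F$. The key observations are that $w\notin S$ means $w$ contributes no domination in $F$, that the sole contribution of $u$ is to dominate $u$ and $w$, and that domination is by direct adjacency, so the removed vertex $w$ can never serve as an intermediary between two retained vertices. Once this bijection is pinned down, the involution argument and the induction itself are routine, and the periodic pattern $D(P_n,-1)=-1,-1,1,\dots$ predicted by the path recurrence $D(P_n,x)=x\bigl(D(P_{n-1},x)+D(P_{n-2},x)+D(P_{n-3},x)\bigr)$ provides a convenient sanity check on the sign bookkeeping.
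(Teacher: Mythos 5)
Your proposal is correct, but there is nothing in the paper to compare it against: the paper states this lemma as an imported result, citing the reference given in the statement, and supplies no proof of its own. Your argument is therefore a genuinely self-contained alternative to citation, and both of its key steps hold up. For dominating sets $S$ containing the stem $w$, the toggle $S \mapsto S \mathbin{\triangle} \{u\}$ is indeed a parity-reversing involution: adding $u$ never destroys domination, and removing $u$ is safe because $u$'s unique neighbour $w$ lies in $S$, so $u$ stays dominated and no other vertex relied on $u$. For sets avoiding $w$, the forced membership $u \in S$ together with the identity $N_F[v] \cap (\{u\} \cup S'') = N_{F-\{u,w\}}[v] \cap S''$ for all $v \notin \{u,w\}$ (valid because $w \notin S''$ and $u$ has no neighbour other than $w$) gives the bijection with dominating sets of $F - \{u,w\}$, and this correspondence automatically handles the degenerate situation where $w$ carried other leaves, which become isolated vertices forced into $S''$ on both sides. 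Hence $D(F,-1) = -D(F-\{u,w\},-1)$, and with the edgeless base case --- where the convention $D(\emptyset,x)=1$ is genuinely needed, already for $F = K_2$ --- induction gives $D(F,-1) \in \{1,-1\}$. Beyond re-proving the cited result, your recurrence specializes to $D(P_n,-1) = -D(P_{n-2},-1)$, which immediately yields the period-four sign pattern recorded later in Lemma~\ref{lem:Pn-1}, so your route delivers slightly more than the paper extracts from the citation.
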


\begin{corollary}
\label{cor:PathNo4cycle}
If a $G$ is $\mathcal{D}$-equivalent to $P_n$ with a component $H$, then $|D(H,-1)| =1$, and so $G$ does not have any 4-cycle components.
\QED
\end{corollary}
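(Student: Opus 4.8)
The plan is to combine the multiplicativity of the domination polynomial over components with the integrality of its coefficients. First I would write $G$ as a disjoint union of its connected components $H_1, \dots, H_m$, so that by the stated multiplicativity
$$D(G,x) = \prod_{j=1}^{m} D(H_j, x),$$
and then evaluate at $x = -1$ to obtain $D(G,-1) = \prod_{j=1}^m D(H_j,-1)$.

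Next, since $G \sim_{\mathcal{D}} P_n$ we have $D(G,-1) = D(P_n,-1)$, and Lemma~\ref{lem:FamForestn1} tells us this value lies in $\{1,-1\}$; hence $|D(G,-1)| = 1$. Taking absolute values of the product gives $\prod_{j=1}^m |D(H_j,-1)| = 1$. The key observation is that each $D(H_j,-1)$ is a nonzero integer: the domination polynomial has integer coefficients, so $D(H_j,-1) \in \mathbb{Z}$, and no factor can vanish, since otherwise the whole product would be $0$, contradicting $|D(G,-1)| = 1$. Thus each $|D(H_j,-1)|$ is a positive integer, and a finite product of positive integers equals $1$ only when every factor equals $1$. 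Therefore $|D(H,-1)| = 1$ for every component $H$, which is the first assertion.

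Finally, to rule out $4$-cycle components I would apply Lemma~\ref{lem:Cn-1}: a $C_4$ component has $4 \equiv 0 \pmod 4$, so $D(C_4,-1) = 3$ and hence $|D(C_4,-1)| = 3 \neq 1$. This contradicts the conclusion that every component of $G$ satisfies $|D(H,-1)| = 1$, so $G$ can have no $C_4$ component.

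Honestly, there is no substantial obstacle here: the whole argument is a short deduction from the two cited lemmas together with multiplicativity. The only point deserving a moment's care is the elementary step that a finite product of positive integers equal to $1$ forces every factor to be $1$ (and, relatedly, that no component contributes a vanishing factor); everything else is direct substitution and comparison of values.
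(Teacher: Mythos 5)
Your proof is correct and is essentially the same argument the paper intends: the corollary is stated with no written proof, being attributed to the multiplicativity of the domination polynomial over components together with Lemmas \ref{lem:Cn-1} and \ref{lem:FamForestn1}, which is precisely what you have spelled out. The one detail you make explicit---that each $D(H_j,-1)$ is a nonzero integer, so a product of absolute value $1$ forces every factor to have absolute value $1$---is exactly the justification the paper leaves implicit, and it is the right one.
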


%
%

In the next Lemma we use the results from Theorem \ref{thm:d(G,n-3)}, Theorem \ref{thm:d(G,n-4)} and Corollary~\ref{pnminus2neq0} to show, for large enough $n$, any graph $G \sim_{\mathcal{D}} P_n$ must be the disjoint union of one path and some number of cycles.

\begin{lemma}
\label{lem:EqPUCk}
For $n \geq 9$, if $G \sim_{\mathcal{D}} P_n$ then $G = H \cup C$ where $H \in \{P_{k},\widetilde{P_{k}}\}$ and $C$ is a disjoint union of cycles.
\end{lemma}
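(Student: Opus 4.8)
```latex
\textbf{Proof proposal.}
The plan is to pin down the structure of any $G \sim_{\mathcal{D}} P_n$ by
combining the top-coefficient formulae derived above with the known values
for paths. Since $D(G,x) = D(P_n,x)$, the two polynomials agree in every
coefficient, so in particular $G$ has order $n$ and $\gamma(G) = \gamma(P_n)$.
First I would invoke Corollary~\ref{pnminus2neq0}: because $D(P_n,-2) \neq 0$
for $n \geq 9$ and the domination polynomial is multiplicative over
components with $D(K_2,-2) = 0$, no component of $G$ can be a $K_2$. Likewise,
Corollary~\ref{cor:PathNo4cycle} rules out any $C_4$ component. Having removed
the two degenerate components, I am in exactly the setting where
Theorem~\ref{thm:d(G,n-3)} and Theorem~\ref{thm:d(G,n-4)} apply, provided I
first argue $G \in \mathcal{G}_2$.

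The heart of the argument is to show $G \in \mathcal{G}_2$, i.e. every vertex
of $G$ is a stem or has degree at most two. I would proceed by matching
coefficients. From Theorem~\ref{thm:FamPathFacts}(i) and
Theorem~\ref{thm:DomPolyGeo}(i), $d(G,n-1) = d(P_n,n-1) = n$ forces $G$ to have
no isolated vertices ($r = 0$). From part (ii), $d(G,n-2) = \binom{n}{2}-2$
together with Theorem~\ref{thm:DomPolyGeo}(ii) forces $t = 2$, so $G$ has
exactly two vertices of degree one. Next I would compare $d(G,n-3)$ against
the path value $\binom{n}{3}-(3n-8)$ using Theorem~\ref{thm:d(G,n-3)}, and then
$d(G,n-4)$ against $\binom{n}{4}-(2n^2-13n+20)$. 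The key point is that the
$n-3$ and $n-4$ formulae in Theorem~\ref{thm:d(G,n-3)},
Theorem~\ref{thm:d(G,n-4)} were proved assuming $G \in \mathcal{G}_2$, so to use
them cleanly I would instead derive a lower bound on the ``over-count''
contributions for a general graph and show that any vertex of degree $\geq 3$
that is not a stem injects an excess into the $d(G,n-3)$ or $d(G,n-4)$ count
that cannot be cancelled, contradicting equality with the path coefficients.
Concretely, a non-stem vertex of degree $r \geq 3$ contributes a term
$\binom{n-r-1}{k-r-1}$ to the $|T_r|$ sum in Lemma~\ref{lem:d(G,n-k)} for
$k = r+1$, and tracking these forces $|T_r| = 0$ for all $r \geq 3$, which is
precisely $G \in \mathcal{G}_2$.

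Once $G \in \mathcal{G}_2$ with $t = 2$ degree-one vertices, I would decompose
$G$ into its connected components. Each component lies in $\mathcal{G}_2$ and has
no $K_2$ or $C_4$; a connected graph in $\mathcal{G}_2$ with every internal
vertex of degree at most two and at most two leaves is either a path, a cycle,
or the closely related graph $\widetilde{P_k}$ obtained by joining the two stems
of a path. Summing the leaf counts over components, the total must be $2$, so
exactly one component carries the two leaves (forcing it to be a path $P_k$ or
$\widetilde{P_k}$) while every remaining component is leafless, hence a cycle.
This yields $G = H \cup C$ with $H \in \{P_k, \widetilde{P_k}\}$ and $C$ a
disjoint union of cycles.

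The main obstacle I anticipate is the middle step: justifying
$|T_r| = 0$ for $r \geq 3$ rigorously without circularly invoking the
$\mathcal{G}_2$ hypothesis baked into Theorem~\ref{thm:d(G,n-4)}. The cleanest
route is probably to use only Lemma~\ref{lem:d(G,n-k)} in full generality for
$k = 3$ and $k = 4$, bound the nonnegative over-count term
$\sum_{H_k} \max(f_G(H_k,V\!-\!W)-1,0)$ from above, and show that if
$\sum_{r\geq 3}|T_r| > 0$ then the leading behaviour of the bracketed term in
the $n-3$ coefficient already exceeds $3n-8$ (or the $n-4$ coefficient exceeds
$2n^2-13n+20$) for all large $n$, giving the contradiction. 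Handling the
interplay between $|T_2|$, the stem count $\omega$, and the leaf distribution
$|S_i|$ so that these bounds are tight enough to be conclusive is the delicate
bookkeeping that this step demands.
```
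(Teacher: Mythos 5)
Your setup matches the paper's: coefficient matching at $n-1$ and $n-2$ to get no isolated vertices and exactly two leaves, Corollary~\ref{pnminus2neq0} to exclude $K_2$ components, and Corollary~\ref{cor:PathNo4cycle} to exclude $C_4$ components. You also correctly spot the circularity danger in invoking Theorem~\ref{thm:d(G,n-4)} before $G \in \mathcal{G}_2$ is known. But your proposed resolution of that danger does not work. You want to force $|T_r| = 0$ for $r \geq 3$ by tracking the term $\binom{n-r-1}{k-r-1}$ that such a vertex contributes in Lemma~\ref{lem:d(G,n-k)}; however, a non-stem vertex of degree $r$ enters the sum $\sum_{i=0}^{k-1}|T_i|\binom{n-i-1}{k-i-1}$ only when $k \geq r+1$, and the only path coefficients available (Theorem~\ref{thm:FamPathFacts}) are $d(P_n,n-k)$ for $k \leq 4$. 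So your method could at best detect degree-$3$ non-stems through $d(G,n-4)$; a non-stem vertex of degree $4$ or more is invisible to every coefficient you have, both in the $T_i$ sums and in the over-count term (no $3$- or $4$-subset can encompass it). The paper's actual argument is of a different kind: it applies Theorem~\ref{thm:d(G,n-3)} -- whose hypotheses are only ``no isolated vertices, no $K_2$ components,'' not $\mathcal{G}_2$ -- solves the resulting equation for $|T_2|$, and plays it against the counting identity $n = \omega + \sum_i |T_i| \geq \omega + |T_1| + |T_2|$. Since $\omega \in \{1,2\}$ (only two leaves), the equation forces $|T_2| \geq n-\omega-2$ while the identity forces $|T_2| \leq n-\omega-2$; equality then simultaneously yields $\sum_{i \geq 3}|T_i| = 0$ (i.e.\ $G \in \mathcal{G}_2$), $|\mathcal{L}_3| = 0$, and $|\mathcal{C}_3| = 0$. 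High-degree non-stems are excluded not because they inflate a coefficient, but because they would leave too few vertices to supply the required $|T_2|$ -- a pigeonhole your proposal never finds.

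There is a second gap at the end. You assert that a connected $\mathcal{G}_2$ component with at most two leaves and no $K_2$ or $C_4$ must be a path, a cycle, or $\widetilde{P_k}$. This is false: stems in $\mathcal{G}_2$ may have arbitrary degree, so a component can carry $r$-loops ($r \geq 4$) at a stem, or several internally disjoint paths between the two stems -- exactly the structures in Figure~\ref{fig:Pr1s2s}. Eliminating these is the entire purpose of the paper's $d(G,n-4)$ analysis via $V_0$, $V_1^i$, $V_2^{ij}$ and Theorem~\ref{thm:d(G,n-4)} (now legitimately applicable): the one-stem case collapses to $|V_1^1| = \tfrac{2}{3}$, a contradiction, and the two-stem case forces $\sum_i \tfrac{|V_1^i|}{2} + |V_2| + \binom{|V_2|}{2} = 1$, whose only solutions leave a single stem-to-stem path and no loops, i.e.\ $H \in \{P_k, \widetilde{P_k}\}$. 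You mention comparing $d(G,n-4)$ with the path value but deploy it only for your (unworkable) $\mathcal{G}_2$ step, and then skip the loop/multi-path exclusion entirely, so the concluding classification is unjustified as written.
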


\begin{proof}

Let $G$ be a graph with $D(G,x) = D(P_n,x)$ where $n \geq 9$. Then $d(G, i)=d(P_n,i)$ for all $i$. 
%
%
Furthermore, by Theorem \ref{thm:FamPathFacts} we have
\renewcommand{\labelenumi}{(\roman{enumi})}
 \begin{enumerate}
   \item $d(G, n-1) = n$.
   \item $d(G, n-2) = {n \choose 2} - 2$.
   \item $d(G, n-3) = {n \choose 3} - (3n-8)$.
   \item $d(G, n-4) = {n \choose 4} - (2n^2 -13n+20)$.
 \end{enumerate}

\noindent By Theorem \ref{thm:DomPolyGeo} the number of isolated vertices in $G$ is $n - d(G,n-1)=0$. By Corollary~\ref{pnminus2neq0}, $D(G,-2) \neq 0$ and again by Theorem \ref{thm:DomPolyGeo} the number of leaves is $|T_1| = {n \choose 2} - d(G,n-2) = 2$. By Theorem \ref{thm:d(G,n-3)}, as $G$ has no $K_2$ components and no isolated vertices,

$$d(G,n-3) = {n \choose 3} -\bigg(|T_1| \cdot (n-2)+|T_2|-\sum_{i=1}^{\omega} {|S_i| \choose 2}-|\mathcal{L}_3|-2|\mathcal{C}_3|\bigg).$$ 

\noindent Furthermore, from $|T_1|=2$ and $(iii)$ we know

$$n-4 = |T_2|-\sum_{i=1}^{\omega} {|S_i| \choose 2}-|\mathcal{L}_3|-2|\mathcal{C}_3|.$$

\noindent By rearranging for $|T_2|$ we get

$$|T_2| = n-4+\sum_{i=1}^{\omega} {|S_i| \choose 2}+|\mathcal{L}_3|+2|\mathcal{C}_3|.$$

\noindent We claim for $G$, $|\mathcal{L}_3|=0$, $|\mathcal{C}_3| = 0$ and $G \in \mathcal{G}_2$ (recall that $\mathcal{G}_2$ is the set of all graphs with maximum non-stem degree two, and that $\omega$ is the number of stems in $G$). We will show our claim is true using the fact that $n = \omega + \sum_{i \in \mathbb{N}} |T_i|$ so $n \geq \omega + |T_1|+|T_2|$. As $|T_1|=2$ then $T_2 \leq n - (2 + \omega)$. Also, if $n = \omega + |T_1|+|T_2|$ then $G \in \mathcal{G}_2$. As $G$ has two leaves, it either has one or two stems. We now consider the two cases for $G$. 

\vspace{5mm}
\noindent \textbf{Case 1:} \emph{$G$ has one stem.}
\vspace{5mm}

\noindent Then $\omega = 1$, $|S_1| = 2$, and $|T_2| \leq n-3$. Thus

$$|T_2| = n-4+{2 \choose 2}+|\mathcal{L}_3|+2|\mathcal{C}_3|.$$

\noindent As $|\mathcal{L}_3|+2|\mathcal{C}_3| \geq 0$ then $|T_2| \geq n-3$ and therefore $|T_2| = n-3$. Furthermore $|\mathcal{L}_3|+2|\mathcal{C}_3| = 0$ so $|\mathcal{L}_3|=0$ and $|\mathcal{C}_3| = 0$. As $\omega + |T_1|+|T_2| = n$, $G \in \mathcal{G}_2$.

\vspace{5mm}
\noindent \textbf{Case 2:} \emph{$G$ has two stems.}
\vspace{5mm}

\noindent Then $\omega = 2$ ,$|S_1| = 1$,$|S_2| = 1$, and $|T_2| \leq n-4$. Thus

$$|T_2| = n-4+0+|\mathcal{L}_3|+2|\mathcal{C}_3|.$$

\noindent As $|\mathcal{L}_3|+2|\mathcal{C}_3| \geq 0$ then $|T_2| \geq n-4$ and therefore $|T_2| = n-4$. Furthermore $|\mathcal{L}_3|+2|\mathcal{C}_3| = 0$ so $|\mathcal{L}_3|=0$ and $|\mathcal{C}_3| = 0$. As $\omega + |T_1|+|T_2| = n$, $G \in \mathcal{G}_2$.

\vspace{5mm}

For a graph in $\mathcal{G}_2$, a $T_2$ vertex can only be adjacent to stems or other $T_2$ vertices. Therefore the $T_2$ vertices form paths between stems, $r$-loops, and disjoint cycles in $\mathcal{G}_2$ graphs. As $G \in \mathcal{G}_2$, $G$ will be the disjoint union of some number of cycles and a subgraph $H$ which has one of the two forms shown in Figure \ref{fig:Pr1s2s}. (These two forms were noted for graphs domination equivalent to paths in \cite{2010Char}, but we shall need more than was used there about the types of subgraphs present to limit the possibilities). 

\begin{figure}[!h]
\def\c{0.5}
\def\d{0.75}
\def\e{0.75}
\centering
\subfigure[One stem]{
\scalebox{\c}{
\begin{tikzpicture}

\draw (0,1*\e) circle (1*\e);
\draw (0,1.5*\e) circle (1.5*\e);
\draw (0,2.5*\e) circle (2.5*\e);
\draw (0,3*\e) circle (3*\e);

\path (0,3*\e) -- node[auto=false]{\vdots} (0,5*\e);

\node[shape=circle,draw=black,fill=white] (1) at (0,0) {};
\node[shape=circle,draw=black,fill=white] (2) at (-1,-1) {};
\node[shape=circle,draw=black,fill=white] (3) at (1,-1) {};

\begin{scope}
    \path [-] (1) edge node {} (2);
    \path [-] (1) edge node {} (3);
\end{scope}
\end{tikzpicture}}}
\quad
\subfigure[Two stems]{
\scalebox{\c}{
\begin{tikzpicture}

\draw (-2-1*\d,0) circle (1*\d);
\draw (-2-1.5*\d,0) circle (1.5*\d);
\draw (-2-2.5*\d,0) circle (2.5*\d);
\draw (-2-3*\d,0) circle (3*\d);
\path (-2-3*\d,0) -- node[auto=false]{\ldots} (-2-5*\d,0);

\draw (2+1*\d,0) circle (1*\d);
\draw (2+1.5*\d,0) circle (1.5*\d);
\draw (2+2.5*\d,0) circle (2.5*\d);
\draw (2+3*\d,0) circle (3*\d);
\path (2+3*\d,0) -- node[auto=false]{\ldots} (2+5*\d,0);

\draw (0,0) ellipse (2 and 0.5);
\draw (0,0) ellipse (2 and 1);
\draw (0,0) ellipse (2 and 1.5);

\path (0,0.5) -- node[auto=false]{\vdots} (0,-0.5);



\node[shape=circle,draw=black,fill=white] (1) at (-2,0) {};
\node[shape=circle,draw=black,fill=white] (2) at (-2,-3) {};
\node[shape=circle,draw=black,fill=white] (3) at (2,0) {};
\node[shape=circle,draw=black,fill=white] (4) at (2,-3) {};

\begin{scope}
    \path [-] (1) edge node {} (2);
    \path [-] (3) edge node {} (4);
\end{scope}
\end{tikzpicture}}}
\caption{The two possible structures of $H$}%
\label{fig:Pr1s2s}%
\end{figure}
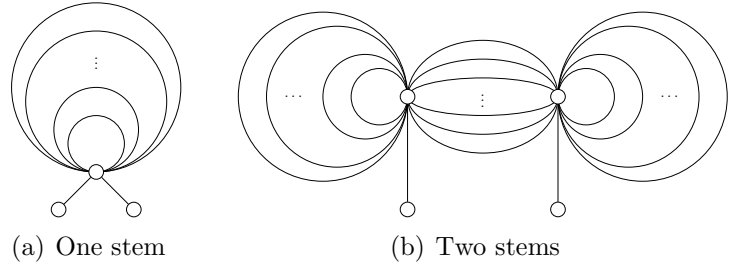

Recall from Section \ref{sec:Coeff}, we partitioned $T_2$ into subsets based on the number of neighbouring stems.

\begin{enumerate}
\item[$\bullet$] $V_0$: The subset of $T_2$ with no adjacent stems.
\item[$\bullet$] $V_1^{i}$:  The subset of $T_2$ adjacent to exactly one stem, stem $i$.
\item[$\bullet$] $V_2^{ij} $:  The subset of $T_2$ adjacent to exactly two stems, stems $i$ and $j$ (\emph{denoted $V_2$ when $G$ only has two stems} ).
\end{enumerate}

We wish to show that the subgraph $H$ of $G$ is either a path or a path with an edge between its stems. This is equivalent of showing $H$ has two stems with either one path between them or two, with one being an edge, and no $r$-loops. If $G$ has exactly two stems, and no $r$-loops, then the number of paths between the stems is exactly $\frac{1}{2}(|V_1^1|+|V_1^2|)+|V_2|$. Furthermore, if $|V_1^1| \leq 1$ and $|V_1^2| \leq 1$ then $H$ has no $r$-loops. Therefore it is sufficient to show $H$ has two stems and either $|V_1^1|=|V_1^2|=0$ and $|V_2|=1$, or $|V_1^1|=|V_1^2|=1$ and $|V_2|=0$. We will show this by examining $d(G,n-4)$.

By Theorem \ref{thm:d(G,n-4)}, as $G$ has no $K_2$ components, no isolated vertices, and $G \in \mathcal{G}_2$, we have that

$$d(G,n-4) = {n \choose 4} -\bigg(|T_1| {n-2\choose 2} + |T_2|(n-3) - \alpha_1 -\alpha_2-\alpha_3     \bigg)$$

\noindent where

\begin{center}
\begin{tabular}{ l c l }
$\alpha_1$ & $=$ & $\sum\limits_{i=1}^{\omega} {|S_i| \choose 2}(n-|S_i|-1)+\sum\limits_{i=1}^{\omega} \frac{|S_i|}{2}(|T_1|-|S_i|)+2\sum\limits_{i=1}^{\omega} {|S_i| \choose 3}$ \\ 

$\alpha_2$ & $=$ & $\sum\limits_{i=1}^{\omega} |V_1^{i}||S_i|+\sum\limits_{i \neq j} |V_2^{ij}|(|S_i|+|S_j|)$ \\ 

$\alpha_3$ & $=$ & $|V_0|+\sum\limits_{i=1}^{\omega} \frac{|V_1^i|}{2}+\sum\limits_{i \neq j} {|V_2^{ij}| \choose 2}-|\mathcal{C}_4|+|\mathcal{C}_3|(2n-9)+\sum\limits_{i=1}^{\omega}|\mathcal{L}_3^i|(n-4-|S_i|)$ \\ 
\end{tabular}
\end{center}

\noindent As $|\mathcal{L}_3|=0$ then $|\mathcal{L}_3^i|=0$ for every $i$. Furthermore $|\mathcal{C}_3|=0$ and by Corollary \ref{cor:PathNo4cycle} $|\mathcal{C}_4|=0$. We again consider the two cases where $G$ has one stem and $G$ has two stems. Note that $|T_2|=|V_0|+\sum_{i=1}^{\omega} |V_1^{i}|+\sum_{i \neq j} |V_2^{ij}|$. 


\vspace{5mm}
\noindent \textbf{Case 1:} \emph{$G$ has one stem.}
\vspace{5mm}

\noindent We claim this case results in a contradiction. As $G$ has one stem then $\omega = 1$, $|S_1| = 2$, $|T_2| = n-3$. As $G$ only has one stem, there are no degree two vertices adjacent to two stems and $|V_2^{ij}|=0$ for all $i$ and $j$. Furthermore $|V_0|+|V_1^1| = |T_2| = n-3$. Using this we can simplify $\alpha_1$, $\alpha_2$, and $\alpha_3$ to be

\begin{center}
\begin{tabular}{ l c l }
$\alpha_1$ & $=$ & ${2 \choose 2}(n-2-1)+\frac{2}{2}(2-2)+2 \cdot 0 = n-3.$ \\ 

$\alpha_2$ & $=$ & $2|V_1^{1}|.$ \\ 

$\alpha_3$ & $=$ & $|V_0|+\frac{|V_1^1|}{2}.$ \\ 
\end{tabular}
\end{center}

\noindent As $|V_0|+|V_1^1| = n-3$ then $\alpha_1+\alpha_2+\alpha_3 = 2n-6+\frac{3|V_1^1|}{2}$ and

$$d(G,n-4) = {n \choose 4} -\bigg(2n^2-13n+21 - \frac{3|V_1^1|}{2}\bigg)$$

\noindent However by item $(iv)$, $d(G,n-4) = {n \choose 4} -(2n^2-13n+20)$ and therefore $|V_1^1|=\frac{2}{3}$. But $|V_1^1|$ is a positive integer, which gives us a contradiction.

\vspace{5mm}
\noindent \textbf{Case 2:} \emph{$G$ has two stems.}
\vspace{5mm}

\noindent As $G$ has two stems, we find that $\omega = 2$ ,$|S_1| = 1$, $|S_2| = 1$, and $|T_2| = n-4$. As there are only two stems, let the set of $T_2$ vertices which are adjacent to both be denoted $V_2$. Furthermore $|V_0|+|V_1^1|+|V_1^2|+|V_2| = |T_2| = n-4$. Using this we can simplify $\alpha_1$, $\alpha_2$, and $\alpha_3$ to be

\begin{center}
\begin{tabular}{ l c l }
$\alpha_1$ & $=$ & $0+\sum\limits_{i=1}^{2} \frac{1}{2}(2-1)=1$ \\ 

$\alpha_2$ & $=$ & $\sum\limits_{i=1}^{2} |V_1^{i}|+2|V_2|$ \\ 

$\alpha_3$ & $=$ & $|V_0|+\sum\limits_{i=1}^{2} \frac{|V_1^i|}{2}+{|V_2| \choose 2}$ \\ 
\end{tabular}
\end{center}

\noindent As $|V_0|+|V_1^1|+|V_1^2|+|V_2| = n-4$ then $\alpha_1+\alpha_2+\alpha_3 = n-3 +\sum\limits_{i=1}^{2} \frac{|V_1^i|}{2} +|V_2|+{|V_2| \choose 2}$ and

$$d(G,n-4) = {n \choose 4} -\bigg(2n^2-13n+21-\sum\limits_{i=1}^{2} \frac{|V_1^i|}{2}- |V_2|-{|V_2| \choose 2}    \bigg)$$

\noindent However by item $(iv)$, $d(G,n-4) = {n \choose 4} -(2n^2-13n+20)$ and therefore

$$\sum_{i=1}^{2} \frac{|V_1^i|}{2}+ |V_2|+{|V_2| \choose 2}=1.$$

\noindent As each summand is non-negative and $|V_2|+{|V_2| \choose 2}$ is a non-negative integer, the only solutions to this are $\sum_{i=1}^{2} \frac{|V_1^i|}{2}=1,|V_2|=0$ or $\sum_{i=1}^{2} \frac{|V_1^i|}{2}=0,|V_2|=1$. 

In the case $\sum_{i=1}^{2} \frac{|V_1^i|}{2}=1,|V_2|=0$, then as $G$ has no $K_2$ components and there are no vertices adjacent to both stems ($|V_2|=0$) then $|V_1^i| \geq 1$ for each $i$. Furthermore as $\sum_{i=1}^{2} \frac{|V_1^i|}{2}=1$, then $|V_1^1|=|V_1^2|=1$, and $|V_2|=0$. In the case  $\sum_{i=1}^{2} \frac{|V_1^i|}{2}=0,|V_2|=1$ as $|V_1^i| \geq 0$ for each $i$ then $|V_1^1|=|V_1^2|=0$, and $|V_2|=1$. Both cases result in $H$ having one path between its two stems and no $r$-loops. As we do not specify the degree of the stems, this allows for the possibility of an edge to be between them, and proves our result.

\end{proof}

Let $n \in \mathbb{Z}$ and $p$ be a prime number. If $n$ is not zero, there is a nonnegative integer $a$ such that $p^a \mid n$ but $p^{a+1} \nmid n$; we let $ord_p(n) = a$. In other words, $a$ is the exponent of $p$ in the prime decomposition of $n$.  Furthermore let $ord_p(0) = 0$. In a similar method used by Akbari and Oboudi \cite{2014EqCycleAll} we will determine $ord_3(D(P_n,-3))$ in order to show that if a graph $G$ is $\mathcal{D}$-equivalent to a path, then $G$ is the disjoint union of a path and at most two cycles.

\begin{lemma} \cite{2014EqCycleAll} For $n \in \mathbb{N}$
\label{lem:Cn_ord3}

\vspace{5mm}
$ord_3(D(C_n,-3)) = \left\{
        \begin{array}{ll}
            \lceil \frac{n}{3} \rceil+1                                  & \quad n \equiv 0 $ mod $3 \\
           \lceil \frac{n}{3} \rceil $ or $ \lceil \frac{n}{3} \rceil +1 & \quad n \equiv 1 $ mod $3 \\
           \lceil \frac{n}{3} \rceil                                     & \quad n \equiv 2 $ mod $3 \\
        \end{array}
    \right.$

\QED
\end{lemma}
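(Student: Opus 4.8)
The plan is to compute $ord_3(D(C_n,-3))$ by exploiting the linear recurrence and reducing the problem modulo powers of $3$. First I would substitute $x = -3$ into the recurrence that $C_n$ satisfies, $D(C_n,x) = x\bigl(D(C_{n-1},x) + D(C_{n-2},x) + D(C_{n-3},x)\bigr)$, to obtain
\begin{equation*}
D(C_n,-3) = -3\bigl(D(C_{n-1},-3) + D(C_{n-2},-3) + D(C_{n-3},-3)\bigr).
\end{equation*}
The leading factor of $-3$ is what drives the growth of $ord_3$ by roughly one unit every three steps, matching the $\lceil n/3\rceil$ in the claimed formula. The plan is to track the sequence $a_n := D(C_n,-3)$ together with its $3$-adic valuation $v(n) := ord_3(a_n)$, using small base cases (which I would compute explicitly from $D(C_1,x)$, $D(C_2,x)$, $D(C_3,x)$) to seed an induction.

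The key step is an inductive argument on $n$, organized by the residue $n \bmod 3$. I would assume the formula holds for all smaller indices and analyze the valuation of the sum $a_{n-1} + a_{n-2} + a_{n-3}$, since $v(n) = 1 + v(a_{n-1}+a_{n-2}+a_{n-3})$. The crux is that for two of the three residue classes the three summands have \emph{distinct} valuations, so the valuation of their sum equals the minimum of the three (the ultrametric property), giving an exact value; this yields the clean formulae in the cases $n \equiv 0$ and $n \equiv 2 \pmod 3$. In the remaining class $n \equiv 1 \pmod 3$, two of the summands will share the minimal valuation, so cancellation of leading $3$-adic digits is possible — this is precisely why the statement allows the ambiguity $\lceil n/3\rceil$ \emph{or} $\lceil n/3\rceil + 1$. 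To make the distinct-valuation argument precise I would carry along the residues of $a_n/3^{v(n)}$ modulo $3$ (the leading $3$-adic digits) as part of the induction hypothesis, so that I can verify which sums cancel and which do not.

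The hard part will be bookkeeping the leading $3$-adic digits cleanly enough to prove the non-cancellation in the $n \equiv 0$ and $n \equiv 2$ cases while correctly accounting for the genuine ambiguity in the $n \equiv 1$ case. Concretely, I expect to need a strengthened induction hypothesis that records, for each residue class, not just $v(n)$ but the value of $(a_n/3^{v(n)}) \bmod 3$; the recurrence then lets me compute these leading digits for the next index and check whether the minimal-valuation terms reinforce or cancel. Because the behaviour is periodic with period $3$ in the residue (and the valuations increase linearly), I anticipate the induction closing after verifying a fixed finite block of base cases — likely the first several values of $n$ covering one or two full periods — against the closed-form claim. The alternative, following Akbari and Oboudi's method more literally, would be to find an explicit closed form or a matrix/generating-function expression for $a_n$ and read off its $3$-adic valuation directly, but the residue-class induction on the recurrence is the most self-contained route given the tools already established in the excerpt.
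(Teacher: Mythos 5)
A preliminary remark: the paper never proves this lemma --- it is quoted from Akbari and Oboudi's paper on cycles --- so there is no internal proof to compare against, and your proposal must stand on its own. Your overall plan (substitute $x=-3$ into the recurrence to get $a_n = -3(a_{n-1}+a_{n-2}+a_{n-3})$ with $a_n = D(C_n,-3)$, then run a strengthened induction on $3$-adic valuations organized by $n \bmod 3$) is sound in outline and parallels what the paper itself does at $x=-2$ in Lemma \ref{lem:gnminus2neq0}. But two specific claims in your plan do not survive scrutiny. First, writing $n=3m+r$ and $v(n) = ord_3(a_n)$, your assertion that the summands have distinct valuations in both cases yielding clean formulas is false for $n \equiv 0$: the step producing $a_{3m+3}$ sums $a_{3m}$, $a_{3m+1}$, $a_{3m+2}$, whose valuations are $m+1$, ($m+1$ or $m+2$), $m+1$, so at least two summands share the minimum and the ultrametric shortcut does not apply. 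Only the $n \equiv 2$ step (valuations $\geq m+2$, $m+2$, $m+1$) has a unique minimum; the $n \equiv 0$ case genuinely needs the non-cancellation-of-leading-digits argument. This part is repairable, since your strengthened hypothesis does carry leading digits mod $3$. (Also, base cases should be $C_3,C_4,C_5$, i.e.\ $a_3=-9$, $a_4=27$, $a_5=-63$, rather than $C_1,C_2,C_3$.)

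The genuine gap is the upper bound in the $n \equiv 1$ case: leading digits mod $3$ cannot prove $v(3m+4) \leq m+3$, i.e.\ $\leq \lceil n/3 \rceil +1$, which the lemma asserts. In the step producing $a_{3m+4}$, if $v(3m+1)=m+1$ and the leading digits of $a_{3m+1}$ and $a_{3m+2}$ cancel, your hypothesis yields only $v(3m+4) \geq m+3$; capping it requires the \emph{second} $3$-adic digit of the sum, which your induction does not record. The repair stays inside your framework but needs tracking modulo $9$. Set $X_m = a_{3m}/3^{m+1}$, $Y_m = a_{3m+1}/3^{m+1}$, $Z_m = a_{3m+2}/3^{m+1}$; the recurrence gives the exact integer relations $X_{m+1} = -(X_m+Y_m+Z_m)$, $Y_{m+1} = -(3X_{m+1}+Y_m+Z_m)$, $Z_{m+1} = -(3X_{m+1}+3Y_{m+1}+Z_m)$, with no divisions, so integrality propagates from $(X_1,Y_1,Z_1) = (-1,3,-7)$ and the triple can be followed modulo $9$ exactly. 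The lemma then follows once every state in the mod-$9$ orbit of $(-1,3,-7)$ satisfies $X \not\equiv 0 \pmod 3$, $Z \not\equiv 0 \pmod 3$, and $Y \not\equiv 0 \pmod 9$: the first two conditions give the $n \equiv 0$ and $n \equiv 2$ formulas, and the third gives exactly the two allowed values for $n \equiv 1$. Since this linear map has determinant $-1$ it is invertible mod $9$, the orbit is purely periodic, and the verification is a finite computation --- though note that even the mod-$3$ dynamics have period $6$ in $m$ (period $18$ in $n$), so your estimate of checking ``one or two periods'' of length $3$ was optimistic.
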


Using a similar approach for paths, we can prove.

\begin{lemma} For $n \in \mathbb{N}$
\label{lem:Pn_ord3}

\vspace{5mm}
$ord_3(D(P_n,-3)) = \left\{
        \begin{array}{ll}
            \lceil \frac{n}{3} \rceil                                    & \quad n \equiv 0 $ mod $3 \\
           \lceil \frac{n}{3} \rceil                                     & \quad n \equiv 1 $ mod $3 \\
           \lceil \frac{n}{3} \rceil \text{ or } \lceil \frac{n}{3} \rceil +1 & \quad n \equiv 2 $ mod $3 \\
        \end{array}
    \right.$
    
    \QED

\end{lemma}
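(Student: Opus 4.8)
The plan is to evaluate the path recurrence at $x=-3$ and analyse the $3$-adic valuation of the resulting integer sequence, exactly mirroring the treatment of $D(C_n,-3)$ in Lemma~\ref{lem:Cn_ord3}. Write $a_n = D(P_n,-3)$. Substituting $x=-3$ into the recurrence gives $a_n = -3(a_{n-1}+a_{n-2}+a_{n-3})$ for $n\ge 3$, with base values $a_1=-3$, $a_2=3$, $a_3=-3$. Since $\lfloor (n-3)/3\rfloor = \lfloor n/3\rfloor -1$, it is natural to normalise by this floor: set $u_n = a_n/3^{\lfloor n/3\rfloor}$. A short computation splitting on $n \bmod 3$ shows that each $u_n$ is an integer and that, writing $x_k=u_{3k}$, $y_k=u_{3k+1}$, $z_k=u_{3k+2}$, the normalised subsequences satisfy the clean coupled recurrences
$$x_k = -(x_{k-1}+y_{k-1}+z_{k-1}), \quad y_k = -3x_k - y_{k-1} - z_{k-1}, \quad z_k = -3(x_k+y_k) - z_{k-1},$$
with base data $x_1=-1$, $y_0=-3$, $z_0=3$. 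Because $ord_3(a_n) = \lfloor n/3\rfloor + ord_3(u_n)$ and $\lceil n/3\rceil - \lfloor n/3\rfloor$ equals $0,1,1$ in the three residue classes, the lemma is equivalent to proving $ord_3(x_k)=0$, $ord_3(y_k)=1$, and $ord_3(z_k)\in\{1,2\}$ for all admissible $k$.

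First I would settle the divisibility half by reducing the three recurrences modulo $3$. The explicit factors of $3$ kill the leading terms, leaving $z_k \equiv -z_{k-1}$, then $y_k \equiv -y_{k-1}-z_{k-1}$, and $x_k \equiv -(x_{k-1}+y_{k-1}+z_{k-1}) \pmod 3$. Since $z_0\equiv 0$, induction gives $z_k\equiv 0\pmod 3$ for all $k$; hence $y_k\equiv -y_{k-1}\pmod 3$, and as $y_0\equiv 0$ also $y_k\equiv 0\pmod 3$ for all $k$. Feeding these back, $x_k\equiv -x_{k-1}\pmod 3$, and since $x_1=-1\not\equiv 0$ we obtain $x_k\not\equiv 0\pmod 3$ for every $k$. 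This already yields $ord_3(x_k)=0$ (settling $n\equiv 0$) together with the lower bounds $ord_3(y_k)\ge 1$ and $ord_3(z_k)\ge 1$.

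The remaining and more delicate task is to bound the valuations from above: to show $ord_3(y_k)=1$ exactly (that is, $y_k\not\equiv 0 \pmod 9$) and $ord_3(z_k)\le 2$ (that is, $z_k\not\equiv 0\pmod{27}$). The plan is to lift the congruence bookkeeping to higher powers of $3$. Inspecting the recurrences shows the update closes on the vector $(x_k \bmod 9,\; y_k \bmod 9,\; z_k \bmod 27)$: computing $x_k\bmod 9$ needs only the previous residues, the term $-3x_k$ in $y_k$ requires $x_k$ only modulo $3$ so $y_k\bmod 9$ is determined, and the term $-3(x_k+y_k)$ in $z_k$ requires $x_k+y_k$ only modulo $9$ so $z_k\bmod 27$ is determined. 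Thus this triple takes finitely many values. I would exhibit an explicit finite set $\Sigma$ of residue triples that contains the initial state and is closed under the update map, every member of which satisfies $x\not\equiv 0\pmod 3$, $y\not\equiv 0\pmod 9$, and $z\not\equiv 0\pmod{27}$; an induction on $k$ then keeps the state inside $\Sigma$ and certifies $ord_3(y_k)=1$ and $ord_3(z_k)\in\{1,2\}$ for all $k$ (one checks directly that the reachable residues have $y\in\{3,6\}\pmod 9$ and $z$ avoiding $0\pmod{27}$).

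The main obstacle is precisely this last step: choosing the modulus so that the coupled system closes, and verifying that the periodic residue pattern never lands on a forbidden value. This is where the genuine ambiguity of the $n\equiv 2$ case lives, and where the argument must go beyond the one-line mod-$3$ reduction that handles the other two classes.
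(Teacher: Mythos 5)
Your proposal is correct and takes essentially the approach the paper intends: the paper omits the proof of Lemma~\ref{lem:Pn_ord3}, deferring to the ``similar approach'' of Lemma~\ref{lem:Cn_ord3}, which is exactly your normalization of $D(P_n,-3)$ by $3^{\lfloor n/3\rfloor}$ followed by residue-tracking through the recurrence. Your deferred finite check does succeed: starting from $(x_1,y_1,z_1)=(-1,3,-9)$, the state $\bigl(x_k \bmod 9,\; y_k \bmod 9,\; z_k \bmod 27\bigr)$ is purely periodic with period $18$, every reachable state has $x\not\equiv 0 \pmod 3$, $y\in\{3,6\} \pmod 9$, and $z\in\{3,6,9,12,15,18,21,24\} \pmod{27}$, so the required upper bounds $ord_3(y_k)=1$ and $ord_3(z_k)\le 2$ hold and the lemma follows.
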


The next straightforward lemma gives the domination numbers of paths and cycles, and will help us to restrict the number of disjoint cycles in $G$ if $G \sim_{\mathcal{D}} P_n$.

\begin{lemma}
\label{lem:FamPCMin}
For every $n \geq 1$, $\gamma(P_n) = \lceil \frac{n}{3} \rceil$, and for all $n \geq 3$, $\gamma(C_n) = \lceil \frac{n}{3} \rceil$.
\QED
\end{lemma}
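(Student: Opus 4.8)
The plan is to establish, for each of the two families, matching lower and upper bounds on the size of a smallest dominating set: the lower bound comes from a counting argument on closed neighbourhoods, and the upper bound from an explicit construction. Since both bounds equal $\lceil \frac{n}{3} \rceil$, the stated equalities follow at once.

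First I would prove the lower bound simultaneously for $P_n$ and $C_n$. The crucial observation is that in either graph every vertex has degree at most $2$, so $|N[v]| \leq 3$ for all $v$. Hence for any dominating set $S$ we have $V = N[S] = \bigcup_{v \in S} N[v]$, and therefore $n = |V| \leq \sum_{v \in S} |N[v]| \leq 3|S|$. This gives $|S| \geq n/3$, and since $|S|$ is an integer we conclude $|S| \geq \lceil \frac{n}{3} \rceil$. As $S$ was arbitrary, $\gamma(P_n) \geq \lceil \frac{n}{3} \rceil$ and $\gamma(C_n) \geq \lceil \frac{n}{3} \rceil$.

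Next I would exhibit a dominating set attaining this bound. Writing the vertices of $P_n$ in order as $v_1, \dots, v_n$, I would partition them into $\lceil \frac{n}{3} \rceil$ consecutive blocks, each an induced subpath of length at most $3$ (taking as many blocks of length exactly $3$ as possible; the single leftover block has length $1$ or $2$ when $n \equiv 1$ or $2 \pmod 3$). Choosing from each block its central vertex---the middle vertex of a block of length $3$, or any vertex of a shorter block---yields a set $S$ whose closed neighbourhoods cover every block, hence all of $V$, since within a block (which is a subpath) the chosen vertex dominates the whole block. Thus $S$ is dominating and $|S| = \lceil \frac{n}{3} \rceil$. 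The same construction works verbatim around $C_n$, reading the blocks cyclically; one only checks that the final, possibly short, block is still dominated by its chosen vertex, which it is since an arc of $C_n$ is again a path. Combining the two bounds gives the claimed equalities.

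The only care needed is the bookkeeping of the block partition across the three residue classes $n \equiv 0, 1, 2 \pmod 3$, and in the cycle case verifying that the wrap-around block introduces no undominated gap; neither is a genuine obstacle. As the statement is elementary and standard, I would keep the write-up correspondingly brief.
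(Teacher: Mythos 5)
Your proof is correct. The paper in fact gives no proof of this lemma at all---it is stated with the box symbol as a standard, well-known fact---and your argument (the lower bound $n \leq \sum_{v \in S}|N[v]| \leq 3|S|$ from the degree bound, plus the explicit block construction of a dominating set of size $\lceil \frac{n}{3} \rceil$) is precisely the canonical proof one would supply, so it fills the omission correctly and with no gaps.
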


From Lemma \ref{lem:EqPUCk} we know if $G$ is $\mathcal{D}$-equivalent to $P_n$ then $G$ is the disjoint union of $H$ and some number of cycles where $H \in \{P_{k},\widetilde{P_{k}}\}$ and $k \leq n$. In the next lemma we will show the number of cycles is at most two.

\begin{lemma} For $n \in \mathbb{N}$
\label{lem:EqPCC}
For $n \geq 9$, if $G \sim_{\mathcal{D}} P_n$ then $G = H \cup C$ where $H \in \{P_{k},\widetilde{P_{k}}\}$, $k \leq n$, and $C$ is a disjoint union of at most two cycles.
\end{lemma}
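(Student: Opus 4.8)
The plan is to apply Lemma~\ref{lem:EqPUCk} to write $G$ as a disjoint union and then bound the number of cycle components by playing two invariants of $G$ against each other: the $3$-adic valuation of $D(\cdot,-3)$ and the domination number. By Lemma~\ref{lem:EqPUCk} we may write $G = H \cup C_{n_1} \cup \cdots \cup C_{n_m}$ with $H \in \{P_k,\widetilde{P_k}\}$, each $n_j \geq 3$, and $k + \sum_j n_j = n$; the goal is to show $m \leq 2$. Let $c_0, c_1, c_2$ be the numbers of indices $j$ with $n_j \equiv 0,1,2 \pmod 3$ respectively, so that $m = c_0 + c_1 + c_2$.

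First I would extract the domination-number identity. Since $D(G,x)=D(P_n,x)$ and the domination number is the least exponent occurring in the domination polynomial, $\gamma(G)=\gamma(P_n)=\lceil n/3\rceil$ by Lemma~\ref{lem:FamPCMin}; as $\gamma$ is additive over components, $\gamma(G)=\gamma(H)+\sum_j \lceil n_j/3\rceil$. The one point needing care is that $\gamma(H)=\lceil k/3\rceil$ when $H=\widetilde{P_k}$ as well. I would prove this by noting $\widetilde{P_k}$ is $P_k$ together with the single edge joining its two stems, and that every dominating set of $\widetilde{P_k}$ is also a dominating set of $P_k$: if some vertex were dominated only through the extra edge, then one of the two leaves of $P_k$ would be left undominated in $\widetilde{P_k}$ too. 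Hence $\gamma(\widetilde{P_k}) \geq \gamma(P_k)$, and the reverse inequality is automatic, so $\gamma(\widetilde{P_k})=\gamma(P_k)=\lceil k/3\rceil$. Writing $\lceil x/3\rceil=(x+\rho(x))/3$ with $\rho(x)\in\{0,1,2\}$ and using $k+\sum_j n_j=n$, the identity $\lceil k/3\rceil+\sum_j \lceil n_j/3\rceil=\lceil n/3\rceil$ collapses to $\rho(k)+\sum_j \rho(n_j)=\rho(n)\leq 2$. Since $\rho(n_j)=2,1$ for $n_j\equiv 1,2$, this forces $2c_1+c_2\leq 2$, and in particular $c_1+c_2\leq 2$.

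Next I would bring in the valuation. Both $D(P_n,-3)$ and every factor $D(C_{n_j},-3)$, $D(H,-3)$ are nonzero integers (their $3$-adic orders are finite by Lemmas~\ref{lem:Cn_ord3} and \ref{lem:Pn_ord3}), so multiplicativity of $D$ over components gives $ord_3(D(P_n,-3))=ord_3(D(H,-3))+\sum_j ord_3(D(C_{n_j},-3))$. Subtracting the domination-number identity and setting $e(X)=ord_3(D(X,-3))-\gamma(X)\geq 0$ (nonnegativity holds because $3^{\gamma(X)}\mid D(X,-3)$), I obtain $e(P_n)=e(H)+\sum_j e(C_{n_j})$. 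By Lemma~\ref{lem:Cn_ord3}, $e(C_{n_j})=1$ whenever $n_j\equiv 0\pmod 3$, while by Lemma~\ref{lem:Pn_ord3}, $e(P_n)=0$ unless $n\equiv 2\pmod 3$ (and $e(P_n)\leq 1$ always). Since every term is nonnegative, $c_0\leq e(P_n)\leq 1$, and combined with $c_1+c_2\leq 2$ this gives $m\leq 3$.

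Finally I would rule out $m=3$. The only way to reach $m=3$ under $c_0\leq 1$ and $2c_1+c_2\leq 2$ is $(c_0,c_1,c_2)=(1,0,2)$; but then $\rho(k)+\sum_j\rho(n_j)=\rho(k)+2=\rho(n)$ forces $\rho(n)=2$, i.e. $n\equiv 1\pmod 3$, whence $e(P_n)=0$ and so $c_0=0$, contradicting $c_0=1$. Therefore $m\leq 2$, as claimed. I expect the main obstacle to be the cycles of order $\equiv 2\pmod 3$: these are invisible to the valuation (their excess $e$ is zero), so the valuation alone cannot bound them, and it is the domination-number count that controls them. Getting the two counts to interlock correctly, together with the small but essential verification that $\gamma(\widetilde{P_k})=\lceil k/3\rceil$, is the delicate part of the argument.
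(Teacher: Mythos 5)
Your proposal is correct and follows essentially the same route as the paper: decompose $G$ via Lemma~\ref{lem:EqPUCk}, then play the domination number (which, by Lemma~\ref{lem:FamPCMin}, controls the cycles of order $\not\equiv 0 \pmod 3$) against the valuation $ord_3(D(\cdot,-3))$ (which, by Lemmas~\ref{lem:Cn_ord3} and~\ref{lem:Pn_ord3}, controls the cycles of order $\equiv 0 \pmod 3$). If anything, your write-up is a bit more careful than the paper's, which tacitly treats $H$ as a path: your direct verification that $\gamma(\widetilde{P_k})=\lceil k/3\rceil$ and your use of only the nonnegativity $e(H)\geq 0$ (via $3^{\gamma(H)}\mid D(H,-3)$) handle the $\widetilde{P_k}$ case explicitly.
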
  

\begin{proof}
Let $n=3m+r$ and $G$ be a graph with $D(G,x) = D(P_{3m+r},x)$ where $r \in \{0,1,2\}$. By Lemma \ref{lem:EqPUCk},

 $$G = P_{3m_1+r_1} \cup C_{3m_2+r_2} \cup \ldots \cup C_{3m_k+r_k},$$
 
\noindent  where $3m+r = \sum_{i=1}^k (3m_i + r_i)$ and for each $i$, $r_i \in \{0,1,2\}$. In this proof we will begin by restricting the number of non-zero $r_i$, and then restrict the number of $r_i$ which are zero. By Lemma \ref{lem:FamPCMin} we know 

 $$\gamma(G) = \sum_{i=1}^k \Bigl \lceil \frac{3m_i + r_i}{3} \Bigr \rceil =  \sum_{i=1}^k m_i + \sum_{i=1}^k \Bigl\lceil \frac{r_i}{3} \Bigr\rceil.$$
 
 \noindent As $3m+r = \sum\limits_{i=1}^k (3m_i + r_i)$ then $\sum\limits_{i=1}^k m_i = m+\frac{r}{3}- \sum\limits_{i=1}^k \frac{r_i}{3}$ and 
 
 $$\gamma(G) = m+\frac{r}{3}+ \sum_{i=1}^k \Big( \Bigl\lceil \frac{r_i}{3} \Bigr\rceil - \frac{r_i}{3}\Big).$$
 
 \noindent As $\gamma(G) = \gamma(P_{3m+r})$ and $\gamma(P_{3m+r})=\lceil \frac{3m+r}{3} \rceil = m+\lceil \frac{r}{3} \rceil $ then 
 
 $$\sum_{i=1}^k \Big(\lceil \frac{r_i}{3} \rceil - \frac{r_i}{3}\Big) = \lceil \frac{r}{3} \rceil - \frac{r}{3}.$$

\noindent Let $f(r_i) =\big\lceil \frac{r_i}{3} \big\rceil - \frac{r_i}{3}$. As $r_i \in \{0,1,2\}$, then $f(0)=0, f(1)=\frac{2}{3}$, and $f(2)=\frac{1}{3}$. Now consider the number of $r_i \neq 0$ for the cases $r=0,1, \text{ and } 2$

\begin{enumerate}
\item[$\bullet$] If $r=0$ then $\sum f(r_i) = 0$ and no $r_i \neq 0$.
\item[$\bullet$] If $r=1$ then $\sum f(r_i) = \frac{2}{3}$ and at most two $r_i \neq 0$.
\item[$\bullet$] If $r=2$ then $\sum f(r_i) = \frac{1}{3}$ and at most one $r_i \neq 0$.
\end{enumerate}

We now count the $r_i=0$. For a graph $H$, let $g(H) = ord_3(D(H,-3)) - \gamma(H)$. Using Lemma \ref{lem:Cn_ord3}, Lemma \ref{lem:Pn_ord3} and the fact that $\gamma(C_{3m+r})=\gamma(P_{3m+r})= \lceil \frac{3m+r}{3} \rceil$ we can obtain $g(P_{3m+r})$ and $g(C_{3m+r})$:

\vspace{5mm}

\begin{center}
\begin{tabular}{c c c}
$g(P_{3m+r}) = \left\{
        \begin{array}{ll}
           0               & \quad r=0 \\
           0               & \quad r=1 \\
           0 \text{ or } 1 & \quad r=2 \\
        \end{array}
    \right.$

&,&

$g(C_{3m+r}) = \left\{
        \begin{array}{ll}
           1               & \quad r=0 \\
           0 \text{ or } 1 & \quad r=1 \\
           0               & \quad r=2 \\
        \end{array}
    \right.$
\end{tabular}
\end{center}

\vspace{5mm}

\noindent For simplicity we will denote $g(P_{3m+r})$ and $g(C_{3m+r})$ with $g_P(r)$ and $g_C(r)$. Because $G$ is the disjoint union of a path and cycles then $\gamma (G)$ is just the sum of domination numbers of each of the paths and cycles. Similarly $ord_3(D(G,-3))$ is just the sum of the orders of each of its components. From this we get the following equality:

$$g_P(r) = g_P(r_1)+\sum_{i=2}^k g_C(r_i)$$

Now consider the number of $r_i = 0$ for the cases $r=0,1, \text{ and } 2$

\begin{enumerate}
\item[$\bullet$] If $r=0$ then $g_P(r_1)+\sum\limits_{i=2}^k g_C(r_i) = 0$ and no $r_i =0$ for $i \geq 2$.
\item[$\bullet$] If $r=1$ then $g_P(r_1)+\sum\limits_{i=2}^k g_C(r_i) = 0$ and no $r_i =0$ for $i \geq 2$.
\item[$\bullet$] If $r=2$ then $g_P(r_1)+\sum\limits_{i=2}^k g_C(r_i) = 0 \text{ or } 1$ and at most one $r_i =0$ for $i \geq 2$
\end{enumerate}

\noindent Together with the three cases counting the number $r_i \neq 0$, we can easily see there are at most two $r_i$ for $i \geq 2$. Therefore there are at most two cycle components.

\end{proof}

We have narrowed the number of cycle components to two in graphs which are $\mathcal{D}$-equivalent to paths. From Lemma \ref{lem:Cn-1} we know $D(C_n,-1)$. We will now evaluate $D'(C_n,-1)$, $D''(C_n,-1)$, and $D'''(C_n,-1)$ as well as $D(P_n,-1)$, $D'(P_n,-1)$, $D''(P_n,-1)$, and $D'''(P_n,-1)$.

%
%

\begin{lemma} \cite{2014EqCycleAll} For $n \in \mathbb{N}$
\label{lem:C'n-1}

\vspace{2mm}

$D'(C_n,-1) = \left\{
        \begin{array}{rl}
           -n, & \quad n \equiv 0 $ mod $4 \\
            n, & \quad n \equiv 1 $ mod $4 \\
            0, & \quad n \equiv 2 $ mod $4 \\
            0, & \quad n \equiv 3 $ mod $4 \\
        \end{array}
    \right.$
    
\QED
\end{lemma}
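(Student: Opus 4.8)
The plan is to convert the stated three-term recurrence for the domination polynomial of cycles into a recurrence for its derivative evaluated at $-1$, and then run a period-four induction anchored by small base cases. Writing $c_n = D(C_n,-1)$ and $a_n = D'(C_n,-1)$, I would differentiate
$$D(C_n,x) = x\bigl(D(C_{n-1},x)+D(C_{n-2},x)+D(C_{n-3},x)\bigr),$$
which the cycles satisfy for $n \geq 6$, to obtain
$$D'(C_n,x) = \sum_{j=1}^{3} D(C_{n-j},x) + x\sum_{j=1}^{3} D'(C_{n-j},x),$$
and then set $x=-1$, giving $a_n = \bigl(c_{n-1}+c_{n-2}+c_{n-3}\bigr) - \bigl(a_{n-1}+a_{n-2}+a_{n-3}\bigr)$.

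The decisive simplification comes from Lemma \ref{lem:Cn-1}. Evaluating the original recurrence at $x=-1$ gives $c_n = -(c_{n-1}+c_{n-2}+c_{n-3})$, so $c_{n-1}+c_{n-2}+c_{n-3} = -c_n$. Substituting yields the clean identity
$$a_n + a_{n-1} + a_{n-2} + a_{n-3} = -c_n,$$
in which the four consecutive indices $n, n-1, n-2, n-3$ cover all residues modulo $4$. Since $c_n = 3$ when $n\equiv 0 \pmod 4$ and $c_n = -1$ otherwise, and since the candidate formula for $a_n$ vanishes on the residue classes $2$ and $3$ modulo $4$ while contributing $-n$ and $+n$ on the classes $0$ and $1$ respectively, I would verify that the sum of the candidate values over any four consecutive integers equals $-c_n$ in each of the four cases for $n \bmod 4$. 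In every case exactly two terms survive, coming from the index $\equiv 0$ (contributing its negative) and the index $\equiv 1$ (contributing itself); because these two indices sit at a fixed offset within the window, the explicit $n$-dependence cancels and the sum collapses to the constant $-c_n$.

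To complete the argument I would supply the base cases by computing $D(C_3,x)$, $D(C_4,x)$, and $D(C_5,x)$ directly, which yield $a_3 = 0$, $a_4 = -4$, and $a_5 = 5$, all matching the claimed formula. The recurrence $a_n = -c_n - (a_{n-1}+a_{n-2}+a_{n-3})$ then propagates the formula for every $n \geq 6$ by strong induction: the three preceding values determine $a_n$, and the compatibility check above guarantees the inductive step produces precisely the stated value.

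I expect the only genuine obstacle to be bookkeeping rather than ideas: for each residue of $n$ modulo $4$ one must correctly identify which of the four consecutive indices is $\equiv 0$ and which is $\equiv 1$, and confirm that $-(\text{index}\equiv 0) + (\text{index}\equiv 1)$ reproduces $-c_n$. This is entirely routine once the four cases are written out, and the crucial point is simply that the linear terms in $n$ cancel, leaving the constant right-hand side dictated by Lemma \ref{lem:Cn-1}.
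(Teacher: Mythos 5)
Your proposal is correct, but it cannot be compared against an argument in the paper because the paper gives none: Lemma~\ref{lem:C'n-1} is quoted from \cite{2014EqCycleAll} with no proof supplied, and the text only remarks that the neighbouring unproved lemmas (Lemmas~\ref{lem:C'''n-1}--\ref{lem:P'''n-1}) have ``similar'' proofs left to the reader. What you have written is therefore a self-contained derivation of a cited result, and it is exactly the kind of argument the paper implicitly leans on. I checked the details and they hold up: differentiating the three-term recurrence and setting $x=-1$ gives $a_n=(c_{n-1}+c_{n-2}+c_{n-3})-(a_{n-1}+a_{n-2}+a_{n-3})$; the evaluation $c_n=-(c_{n-1}+c_{n-2}+c_{n-3})$ collapses this to $a_n+a_{n-1}+a_{n-2}+a_{n-3}=-c_n$; the window of four consecutive indices contains exactly one index $\equiv 0$ and one index $\equiv 1 \pmod 4$, so the candidate values sum to $-n+(n-3)=-3$ when $n\equiv 0$ and to $-m+(m+1)=1$ in the other three cases, matching $-c_n$ from Lemma~\ref{lem:Cn-1} in every case; and your base values $a_3=0$, $a_4=-4$, $a_5=5$ are correct. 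Two small remarks. First, your phrase ``fixed offset'' is slightly loose: the index $\equiv 1$ lies three \emph{below} the index $\equiv 0$ when $n\equiv 0\pmod 4$ but one \emph{above} it otherwise, which is precisely why the collapsed sum is $-3$ in one case and $1$ in the other three --- the case check you defer is genuinely needed, though routine. Second, the cycle recurrence in fact holds for all $n\geq 4$ (with the conventions $D(C_1,x)=x$ and $D(C_2,x)=x^2+2x$), so your restriction to $n\geq 6$ with three honest-cycle base cases is safe but not necessary. The main payoff of your method over bare citation is uniformity: differentiating the recurrence once more yields the analogous collapsed identities for $D''$ and $D'''$, and the identical period-four induction then proves Lemmas~\ref{lem:C''n-1}, \ref{lem:C'''n-1}, and the path analogues (using the same recurrence for $P_n$), which is presumably what the authors mean by the proofs being ``similar.''
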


\begin{lemma} \cite{2014EqCycleAll} For $n \in \mathbb{N}$
\label{lem:C''n-1}

\vspace{2mm}

$D''(C_n,-1) = \left\{
        \begin{array}{rl}
           \frac{1}{4}\,n \left( n-4 \right),  & \quad n \equiv 0 $ mod $4 \\
          -\frac{1}{2}\,n \left( n-1 \right),  & \quad n \equiv 1 $ mod $4 \\
           \frac{1}{4}\,n \left( n+2 \right),  & \quad n \equiv 2 $ mod $4 \\
            0                                , & \quad n \equiv 3 $ mod $4 \\
        \end{array}
    \right.$
    
\QED
\end{lemma}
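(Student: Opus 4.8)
The plan is to exploit the three-term recurrence that the cycles satisfy, namely $D(C_n,x) = x\bigl(D(C_{n-1},x)+D(C_{n-2},x)+D(C_{n-3},x)\bigr)$, by differentiating it twice and then substituting $x=-1$. Applying the product rule twice gives
$$D''(C_n,x) = 2\bigl(D'(C_{n-1},x)+D'(C_{n-2},x)+D'(C_{n-3},x)\bigr) + x\bigl(D''(C_{n-1},x)+D''(C_{n-2},x)+D''(C_{n-3},x)\bigr),$$
and evaluating at $x=-1$ yields
$$D''(C_n,-1) = 2\bigl(D'(C_{n-1},-1)+D'(C_{n-2},-1)+D'(C_{n-3},-1)\bigr) - \bigl(D''(C_{n-1},-1)+D''(C_{n-2},-1)+D''(C_{n-3},-1)\bigr).$$
This expresses the quantity I want purely in terms of first derivatives evaluated at $-1$, which are already known from Lemma \ref{lem:C'n-1}, together with three earlier values of $D''(C_\cdot,-1)$.

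I would then proceed by strong induction on $n$, treating the four residue classes $n \bmod 4$ together. The forcing term $2\bigl(D'(C_{n-1},-1)+D'(C_{n-2},-1)+D'(C_{n-3},-1)\bigr)$ is, by Lemma \ref{lem:C'n-1}, a residue-periodic linear function of $n$: since $D'(C_m,-1)$ is nonzero only for $m\equiv 0$ or $1 \pmod 4$, exactly one or two of the three consecutive summands survive depending on $n \bmod 4$, and I would tabulate this term in each case. For the base cases I would compute $D''(C_n,-1)$ directly from the explicit domination polynomials of the small cycles $C_3,\dots,C_6$ (for instance $D(C_3,x)=x^3+3x^2+3x$ and $D(C_4,x)=x^4+4x^3+6x^2$), which seeds all four residue classes and supplies enough consecutive values to start the induction.

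The inductive step then amounts to checking, in each of the four residue classes, that the candidate quadratic ($\tfrac14 n(n-4)$, $-\tfrac12 n(n-1)$, $\tfrac14 n(n+2)$, or $0$) satisfies the displayed recurrence, using the inductive hypotheses for $n-1,n-2,n-3$ (which lie in the other three residue classes) and the tabulated forcing term. The main obstacle is purely bookkeeping: because the recurrence couples all four residues, each of the four cases of the step requires substituting three different quadratic expressions plus the correct $D'$ values and then verifying that the algebra collapses to exactly the claimed polynomial. I would organize this as four short algebraic identities, one per residue class, and confirm each; there is no conceptual difficulty beyond keeping the four cases mutually consistent and including enough base cases to launch the induction in every class.
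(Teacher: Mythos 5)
The paper never proves this lemma: it is quoted directly from Akbari and Oboudi \cite{2014EqCycleAll}, and the paper only remarks that the proofs of the neighbouring Lemmas~\ref{lem:C'''n-1}--\ref{lem:P'''n-1} are ``similar and left to the reader.'' So there is no in-paper proof to compare against; your argument must stand on its own, and it does. Your differentiated recurrence is right: writing $S(x)=D(C_{n-1},x)+D(C_{n-2},x)+D(C_{n-3},x)$ gives $D''(C_n,x)=2S'(x)+xS''(x)$, hence your displayed identity at $x=-1$. I checked the four residue-class identities that you defer to bookkeeping, and each one does collapse to the claimed quadratic. The most slip-prone case is $n\equiv 3\pmod 4$: there the forcing term is $2\bigl[(n-2)-(n-3)\bigr]=2$ (note $D'(C_{n-2},-1)=n-2$ because $n-2\equiv 1$, and $D'(C_{n-3},-1)=-(n-3)$ because $n-3\equiv 0$), while the three inductive values sum to $\tfrac14(n-1)(n+1)-\tfrac12(n-2)(n-3)+\tfrac14(n-3)(n-7)=2$, giving $2-2=0$ as claimed; forgetting either nonzero first-derivative term would wreck this cancellation. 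Your base cases also check out: $D''(C_3,-1)=0$, $D''(C_4,-1)=0$, $D''(C_5,-1)=-10$, $D''(C_6,-1)=12$, matching the formula in the residues $3,0,1,2$ respectively. The only point you should make explicit in a final write-up is the range of validity of the cycle recurrence, which the paper asserts without qualification: for genuine cycles it holds for $n\geq 6$, and since your base cases run through $C_6$, your induction first invokes the recurrence at $n=7$, where all three predecessors $C_6,C_5,C_4$ are genuine cycles, so the argument is sound as organized.
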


The proofs of Lemmas \ref{lem:C'''n-1} -- \ref{lem:P'''n-1} are similar and are left to the reader.

\begin{lemma} For $n \in \mathbb{N}$
\label{lem:C'''n-1}

\vspace{2mm}

$D'''(C_n,-1) = \left\{
        \begin{array}{rl}
           -\frac{1}{16}{n}^{3}+\frac{3}{4}{n}^{2}-2n  			  	,& \quad n \equiv 0 $ mod $4 \\
          \frac{3}{16}{n}^{3}-\frac {9}{8}{n}^{2}+{\frac {15}{16}}n  ,& \quad n \equiv 1 $ mod $4 \\
           -\frac{3}{16}{n}^{3}+\frac{3}{4}n 						 ,& \quad n \equiv 2 $ mod $4 \\
           \frac{1}{16}{n}^{3}+\frac{3}{8}{n}^{2}+{\frac {5}{16}}n  ,& \quad n \equiv 3 $ mod $4 \\
        \end{array}
    \right.$

  \QED
\end{lemma}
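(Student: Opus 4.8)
The plan is to mirror the proofs of Lemmas \ref{lem:Cn-1}, \ref{lem:C'n-1} and \ref{lem:C''n-1}, exploiting the fact that the cycles satisfy the same linear recurrence as the paths. Writing $f_n(x) = D(C_n,x)$ for brevity, recall that for $n \geq 4$ we have $f_n = x\,g$ with $g = f_{n-1}+f_{n-2}+f_{n-3}$. Differentiating three times by the product rule gives
$$f_n'''(x) = 3\,g''(x) + x\,g'''(x), \qquad g = f_{n-1}+f_{n-2}+f_{n-3}.$$
Evaluating at $x=-1$ yields the key recurrence
$$f_n'''(-1) = 3\sum_{j=1}^{3} f_{n-j}''(-1) \;-\; \sum_{j=1}^{3} f_{n-j}'''(-1),$$
which expresses the unknown third derivatives in terms of the \emph{already known} second derivatives (Lemma \ref{lem:C''n-1}) and the earlier third derivatives.

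With the forcing term $3\sum_{j=1}^{3} f_{n-j}''(-1)$ now an explicit, piecewise-cubic function of $n$ determined by the residues of $n-1,n-2,n-3$ modulo $4$, the statement reduces to solving a constant-coefficient linear recurrence with polynomial forcing. Rather than solve it from scratch, I would proceed by strong induction on $n$: assume the four claimed closed forms hold for all indices below $n$, and verify that substituting the appropriate residue-class formulas for $f_{n-1}'''(-1)$, $f_{n-2}'''(-1)$, $f_{n-3}'''(-1)$, together with the three second-derivative formulas from Lemma \ref{lem:C''n-1}, reproduces exactly the claimed cubic for the residue class of $n$. This amounts to checking four polynomial identities in $n$, one per residue class modulo $4$, since the map $n \mapsto (n-1,n-2,n-3)$ permutes the residue classes. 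The base of the induction requires computing $f_n'''(-1)$ directly for the small values $n = 3,4,5$ (the recurrence only takes hold at $n \geq 6$); these are read off from the explicit low-order domination polynomials of $C_3, C_4, C_5$.

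The main obstacle is purely bookkeeping in the inductive step: because the three lag terms $f_{n-1},f_{n-2},f_{n-3}$ lie in three distinct residue classes modulo $4$, each of the four verifications pulls coefficients from three different cases of both Lemma \ref{lem:C''n-1} and the induction hypothesis, so the four cubic identities must be expanded and matched coefficient by coefficient (in $n^3, n^2, n^1, n^0$). A minor additional subtlety is confirming that the cycle recurrence is indeed valid at the smallest indices to which it is applied, and reconciling this with the direct small-$n$ computations used to seed the induction. Once these routine but careful expansions are carried out, the four closed forms follow, and the analogous Lemma \ref{lem:P'''n-1} for paths is obtained by the identical argument using the path base cases.
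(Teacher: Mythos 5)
Your proposal is correct and follows essentially the route the paper intends: the paper explicitly leaves this proof to the reader as ``similar'' to the preceding cycle lemmas, i.e., differentiate the recurrence $D(C_n,x)=x\bigl(D(C_{n-1},x)+D(C_{n-2},x)+D(C_{n-3},x)\bigr)$ three times, evaluate at $x=-1$ to get $D'''(C_n,-1)=3\sum_{j=1}^{3}D''(C_{n-j},-1)-\sum_{j=1}^{3}D'''(C_{n-j},-1)$, and induct over the residue classes modulo $4$, exactly as you describe. The only blemish is a minor internal inconsistency: you first invoke the cycle recurrence for $n\geq 4$ but it is only valid for $n\geq 6$ (since $C_n$ requires $n\geq 3$), which is in fact what your base cases $n=3,4,5$ correctly assume.
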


\begin{lemma} For $n \in \mathbb{N}$
\label{lem:Pn-1}

\vspace{2mm}

$D(P_n,-1) = \left\{
        \begin{array}{rl}
            1, & \quad n \equiv 0 $ mod $4 \\
           -1, & \quad n \equiv 1 $ mod $4 \\
           -1, & \quad n \equiv 2 $ mod $4 \\
            1, & \quad n \equiv 3 $ mod $4 \\
        \end{array}
    \right.$ 
    
\QED
\end{lemma}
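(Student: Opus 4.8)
The plan is to evaluate the path recurrence at $x=-1$ and then induct on $n$. Writing $a_n = D(P_n,-1)$ and substituting $x=-1$ into the recurrence $D(P_n,x)=x\bigl(D(P_{n-1},x)+D(P_{n-2},x)+D(P_{n-3},x)\bigr)$ (which I will apply for $n\geq 4$, so that all three predecessors are genuine paths) yields the linear recurrence
$$a_n = -\bigl(a_{n-1}+a_{n-2}+a_{n-3}\bigr).$$
Hence the entire sequence is determined by its first three values, which I read off from the base polynomials supplied in the excerpt: $a_1 = D(P_1,-1) = -1$, $a_2 = (-1)^2+2(-1) = -1$, and $a_3 = (-1)^3+3(-1)^2+(-1) = 1$.

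Next I would prove by strong induction that $a_n$ equals the claimed $4$-periodic value, namely $-1,-1,1,1$ according as $n\equiv 1,2,3,0 \pmod{4}$. The base cases $n=1,2,3$ are exactly the computation above. For the inductive step, fix $n\geq 4$ and assume the formula for $a_{n-1},a_{n-2},a_{n-3}$ (all of index at least $1$, hence covered by the base cases or prior induction). Splitting into the four residue classes of $n$ modulo $4$, the residues of $n-1,n-2,n-3$ are determined, so the inductive hypothesis pins down each of the three summands; substituting them into $a_n=-(a_{n-1}+a_{n-2}+a_{n-3})$ reproduces the asserted value in every case. For instance, when $n\equiv 0$ the predecessors lie in classes $3,2,1$ with values $1,-1,-1$, giving $a_n = -(1-1-1) = 1$; the remaining three classes are checked identically.

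There is no genuine obstacle here; the only points requiring care are (i) confirming the recurrence applies for $n\geq 4$ so that $P_{n-3}$ is a bona fide path, and (ii) matching each residue class to the residues of its three predecessors without an off-by-one slip in the bookkeeping. I note that Lemma~\ref{lem:FamForestn1} already guarantees $a_n\in\{1,-1\}$, but since it does not distinguish the two signs, iterating the recurrence directly is the cleanest route to the exact periodic pattern.
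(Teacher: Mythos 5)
Your proof is correct and follows essentially the route the paper intends: the paper leaves this lemma to the reader as being "similar" to the neighbouring recurrence-based lemmas, and your argument --- substituting $x=-1$ into the path recurrence $D(P_n,x)=x\bigl(D(P_{n-1},x)+D(P_{n-2},x)+D(P_{n-3},x)\bigr)$ and inducting over the four residue classes modulo $4$ with base cases $a_1=a_2=-1$, $a_3=1$ --- is exactly that argument. Your care in applying the recurrence only for $n\geq 4$ (so that $P_{n-3}$ is an actual path) is a sensible refinement of the paper's statement that the recurrence holds for $n\geq 3$.
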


\begin{lemma} For $n \in \mathbb{N}$
\label{lem:P'n-1}

\vspace{2mm}

$D'(P_n,-1) = \left\{
        \begin{array}{rl}
            0, & \quad n \equiv 0 $ mod $4 \\
\frac{n+1}{2}, & \quad n \equiv 1 $ mod $4 \\
            0, & \quad n \equiv 2 $ mod $4 \\
-\frac{n+1}{2},& \quad n \equiv 3 $ mod $4 \\
        \end{array}
    \right.$

\QED
\end{lemma}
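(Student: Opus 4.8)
The plan is to exploit the linear recurrence $D(P_n,x) = x\big(D(P_{n-1},x)+D(P_{n-2},x)+D(P_{n-3},x)\big)$ together with the already-established values of $D(P_n,-1)$ from Lemma~\ref{lem:Pn-1}. First I would differentiate the recurrence with respect to $x$, obtaining
$$D'(P_n,x) = \big(D(P_{n-1},x)+D(P_{n-2},x)+D(P_{n-3},x)\big) + x\big(D'(P_{n-1},x)+D'(P_{n-2},x)+D'(P_{n-3},x)\big),$$
and then set $x=-1$. Writing $b_n := D'(P_n,-1)$ and using Lemma~\ref{lem:Pn-1} to evaluate the source term $S_n := D(P_{n-1},-1)+D(P_{n-2},-1)+D(P_{n-3},-1)$, this becomes the inhomogeneous recurrence
$$b_n = S_n - \big(b_{n-1}+b_{n-2}+b_{n-3}\big), \qquad n \geq 4.$$

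A short computation from Lemma~\ref{lem:Pn-1} shows that $S_n$ depends only on $n \bmod 4$, taking the values $-1,\,1,\,1,\,-1$ for $n \equiv 0,1,2,3 \pmod{4}$ respectively. Thus the whole problem reduces to solving a single scalar recurrence whose only $n$-dependence, beyond the period-$4$ term $S_n$, is carried in through the inductive hypothesis.

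I would then argue by strong induction on $n$. The base cases $n=1,2,3$ follow directly from $D(P_1,x)=x$, $D(P_2,x)=x^2+2x$, and $D(P_3,x)=x^3+3x^2+x$, giving $b_1 = 1$, $b_2 = 0$, $b_3 = -2$, all in agreement with the claimed formula. For the inductive step, fix $n \geq 4$ and assume the formula for $b_{n-1},b_{n-2},b_{n-3}$. I would split into the four residue classes of $n$ modulo $4$; in each class the residues of $n-1,n-2,n-3$ are determined, so the hypothesis supplies explicit values (each either $0$ or $\pm\tfrac{m+1}{2}$ for the appropriate $m$), and combining their sum with the known $S_n$ yields exactly the claimed $b_n$. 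For instance, when $n \equiv 1 \pmod{4}$ one has $b_{n-1}=0$, $b_{n-2} = -\tfrac{n-1}{2}$, $b_{n-3}=0$ and $S_n = 1$, so $b_n = 1 + \tfrac{n-1}{2} = \tfrac{n+1}{2}$.

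The only real hazard — and the part I would double-check most carefully — is tracking the linear-in-$n$ contributions across the four classes. In the two \emph{even} residue classes exactly two of $b_{n-1},b_{n-2},b_{n-3}$ are nonzero, and their linear parts must cancel so that the sum is $O(1)$ and $b_n=0$; in the two \emph{odd} residue classes exactly one of them is nonzero, and its $\pm\tfrac{n-1}{2}$ combines with the bounded source $S_n$ to produce the claimed $\pm\tfrac{n+1}{2}$ growth. Verifying these sign patterns in all four cases is routine but must be done with care to match signs; there is no deeper obstacle, and the identical differentiate-and-induct scheme, now carrying higher derivatives of the source term, is exactly what handles the companion Lemmas~\ref{lem:C'''n-1} through~\ref{lem:P'''n-1}.
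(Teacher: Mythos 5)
Your proof is correct: the source term values $S_n=-1,1,1,-1$ for $n\equiv 0,1,2,3 \pmod{4}$, the base cases $b_1=1$, $b_2=0$, $b_3=-2$, and all four inductive cases check out (in particular the cancellation $-\tfrac{n}{2}+\tfrac{n-2}{2}=-1$ and $\tfrac{n}{2}-\tfrac{n-2}{2}=1$ in the even classes). This is essentially the paper's own approach: the paper leaves this lemma to the reader as ``similar'' to the adjacent ones, and the intended argument is precisely your scheme of differentiating the recurrence $D(P_n,x)=x\bigl(D(P_{n-1},x)+D(P_{n-2},x)+D(P_{n-3},x)\bigr)$, evaluating at $x=-1$, and inducting over residue classes modulo $4$.
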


\begin{lemma} For $n \in \mathbb{N}$
\label{lem:P''n-1}

\vspace{2mm}

$D''(P_n,-1) = \left\{
        \begin{array}{rl}
           -\frac{1}{8}n(n+4)     ,            & \quad n \equiv 0 $ mod $4 \\
           -\frac{1}{8}(n-1)^2    ,            & \quad n \equiv 1 $ mod $4 \\
            \frac{1}{8}(n+2)^2    ,            & \quad n \equiv 2 $ mod $4 \\
            \frac{1}{8}(n-3)(n+1)   ,          & \quad n \equiv 3 $ mod $4 \\
        \end{array}
    \right.$
    
\QED 
\end{lemma}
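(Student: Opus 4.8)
The plan is to prove the formula by strong induction on $n$, using the defining recurrence for $D(P_n,x)$ after differentiating it twice. Writing $D_n := D(P_n,x)$, the recurrence $D_n = x(D_{n-1}+D_{n-2}+D_{n-3})$ (valid for $n \geq 3$) yields, upon differentiating twice with respect to $x$,
$$D_n'' = 2\big(D_{n-1}'+D_{n-2}'+D_{n-3}'\big) + x\big(D_{n-1}''+D_{n-2}''+D_{n-3}''\big).$$
Evaluating at $x=-1$ gives the scalar recurrence
$$D''(P_n,-1) = 2\big(D'(P_{n-1},-1)+D'(P_{n-2},-1)+D'(P_{n-3},-1)\big) - \big(D''(P_{n-1},-1)+D''(P_{n-2},-1)+D''(P_{n-3},-1)\big),$$
in which the first-derivative terms are already known in closed form from Lemma~\ref{lem:P'n-1}. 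This reduces the problem to an induction on a linear order-three recurrence whose inhomogeneous part is explicit.

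First I would dispatch the base cases $n=1,2,3$ by direct computation from $D(P_1,x)=x$, $D(P_2,x)=x^2+2x$ and $D(P_3,x)=x^3+3x^2+x$; one finds $D''(P_1,-1)=0$, $D''(P_2,-1)=2$, and $D''(P_3,-1)=0$, each agreeing with the claimed formula for the appropriate residue. For the inductive step ($n \geq 4$) I would assume the formula for $n-1,n-2,n-3$ (all at least $1$, hence covered by strong induction) and substitute both the induction hypotheses and the values of $D'(P_{n-i},-1)$ from Lemma~\ref{lem:P'n-1} into the recurrence above, then check that the result matches the claimed expression for $D''(P_n,-1)$. Because the formula is given by four residue classes modulo $4$ and the recurrence mixes the three residues of $n-1,n-2,n-3$, this splits into exactly four polynomial identities in $n$ — one for each value of $n \bmod 4$ — each of which is a routine quadratic identity. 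It is convenient that $D'(P_m,-1)=0$ whenever $m$ is even, so only the odd-index first-derivative terms actually contribute.

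The main obstacle is organizational rather than conceptual: keeping straight which residue class each of $n-1$, $n-2$, $n-3$ falls into as $n$ ranges over its four classes, and correctly pulling the matching closed forms (with their signs) for both $D'$ and $D''$ from Lemmas~\ref{lem:P'n-1} and the induction hypothesis. Once the four substitutions are set up, each reduces to verifying equality of two quadratics in $n$, which is immediate. The argument is exactly parallel to that of Lemma~\ref{lem:C''n-1} for cycles, the only differences being the base cases and the inhomogeneous term, which here comes from $D'(P_m,-1)$ rather than $D'(C_m,-1)$.
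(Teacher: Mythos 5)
Your proposal is correct: the differentiated recurrence, the base cases $D''(P_1,-1)=0$, $D''(P_2,-1)=2$, $D''(P_3,-1)=0$, and all four residue-class verifications check out (I confirmed each quadratic identity). This is essentially the proof the paper intends — it omits the argument, noting only that it is ``similar'' to the cited cycle lemmas of Akbari and Oboudi, whose proofs are exactly this kind of induction on the differentiated recurrence evaluated at $x=-1$, with the lower-order derivative lemma supplying the inhomogeneous term.
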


\begin{lemma} For $n \in \mathbb{N}$
\label{lem:P'''n-1}

\vspace{2mm}

$D'''(P_n,-1) = \left\{
        \begin{array}{rl}
            \frac{1}{16}n^3-n                        ,   & \quad n \equiv 0 $ mod $4 \\
           -\frac{9}{16}n^2+\frac{3}{8}n+\frac{3}{16} ,  & \quad n \equiv 1 $ mod $4 \\
           -\frac{1}{16}n^3+\frac{1}{4}n              ,  & \quad n \equiv 2 $ mod $4 \\
            \frac{9}{16}n^2+\frac{3}{8}n-\frac{3}{16} ,  & \quad n \equiv 3 $ mod $4 \\
        \end{array}
    \right.$

\QED

\end{lemma}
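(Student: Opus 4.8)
The plan is to derive a recurrence for $D'''(P_n,-1)$ directly from the three-term recurrence for $D(P_n,x)$ and then argue by strong induction on $n$, case-splitting on the residue of $n$ modulo $4$. Concretely, I would start from
$$D(P_n,x) = x\bigl(D(P_{n-1},x)+D(P_{n-2},x)+D(P_{n-3},x)\bigr),$$
valid for $n \geq 4$ with base cases $D(P_1,x)=x$, $D(P_2,x)=x^2+2x$, $D(P_3,x)=x^3+3x^2+x$, and differentiate three times. Writing $S = D(P_{n-1},x)+D(P_{n-2},x)+D(P_{n-3},x)$, repeated use of the product rule on $D(P_n,x)=xS$ gives $D''' = 3S'' + xS'''$, that is
$$D'''(P_n,x) = 3\bigl(D''(P_{n-1},x)+D''(P_{n-2},x)+D''(P_{n-3},x)\bigr) + x\bigl(D'''(P_{n-1},x)+D'''(P_{n-2},x)+D'''(P_{n-3},x)\bigr).$$
Setting $x=-1$ yields the key recurrence
$$D'''(P_n,-1) = 3\sum_{j=1}^{3} D''(P_{n-j},-1) - \sum_{j=1}^{3} D'''(P_{n-j},-1),$$
in which the second-derivative values are already known in closed form from Lemma \ref{lem:P''n-1}.

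I would then verify the claimed formula directly for the base cases $n=1,2,3$ (a short computation from the explicit polynomials above), which seeds the depth-three recurrence. For the inductive step, I would assume the formula holds for all smaller indices and fix the residue of $n$ modulo $4$. Since $n-1,n-2,n-3$ then have determined residues modulo $4$, each of the three terms $D'''(P_{n-j},-1)$ is replaced by the appropriate branch of the induction hypothesis and each term $D''(P_{n-j},-1)$ by the appropriate branch of Lemma \ref{lem:P''n-1}. Substituting these into the recurrence reduces each of the four cases to a single polynomial identity in $n$, which I would expand and collect so as to match the target expression for that residue class.

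The work is almost entirely bookkeeping, so the main obstacle is organizational rather than conceptual: one must track the four residue branches for $D''$ and $D'''$ simultaneously, and the degrees of the candidate formulas differ across residue classes (cubic for $n \equiv 0,2 \pmod 4$ but only quadratic for $n \equiv 1,3 \pmod 4$). The delicate checkpoint is therefore confirming that, in the classes where $D'''(P_n,-1)$ is quadratic, the cubic contributions arising from the cubic branches of the predecessors cancel exactly; conversely, in the cubic classes the leading coefficient $\frac{1}{16}$ (resp.\ $-\frac{1}{16}$) of $n^3$ must survive with the correct sign. Once this leading-order cancellation is checked in each case, matching the remaining lower-order coefficients is routine. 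This is precisely the ``similar'' argument the authors defer to the reader, paralleling the derivation of Lemma \ref{lem:C'''n-1} for cycles via the same recurrence.
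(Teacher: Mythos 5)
Your proposal is correct and is exactly the argument the paper intends: the authors leave this proof to the reader as ``similar'' to the preceding lemmas, namely induction on $n$ via the recurrence $D(P_n,x)=x\bigl(D(P_{n-1},x)+D(P_{n-2},x)+D(P_{n-3},x)\bigr)$ differentiated and evaluated at $x=-1$, seeded by the base cases $P_1,P_2,P_3$ and using Lemma~\ref{lem:P''n-1} for the second-derivative terms. Your recurrence $D'''(P_n,-1)=3\sum_{j=1}^{3}D''(P_{n-j},-1)-\sum_{j=1}^{3}D'''(P_{n-j},-1)$ is derived correctly, the base cases check out, and the residue-class bookkeeping closes the induction as you describe.
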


We now present our main result, the equivalence class of paths. The next theorem will show $[P_{n}] =\{ P_{n}, \widetilde{P_{n}} \}$ for $n \geq 9$. However, first we will discuss the $[P_{n}]$ for $n \leq 8$ as shown in Table \ref{tab:EqP13-8}. For $n \neq 4,7,8$, $[P_{n}] =\{ P_{n}, \widetilde{P_{n}} \}$ ($P_3$ only has one stem, so $P_3$ and $P_3'$ are isomorphic). Recall from the proof of Lemma \ref{lem:EqPUCk}, $D(P_n,-2)=0$ when $n = 4,7,8$. This is evident in Table \ref{tab:EqP13-8} as $P_4$, $P_7$, and $P_8$ are each $\mathcal{D}$-equivalent to graphs with $K_2$ components. Note that $[P_7]$ and $[P_8]$ each have four graphs, however they effectively only have two graphs as the other two graphs are just copies with an irrelevant edge added (the edge between two stems).

\def\c{0.25}
\def\r{1}
\def\m{0}
\def\h{0}
\begin{table}
\begin{center}
\begin{tabular}{|c|c|c|c|}
\hline
$n$ & $D(P_n,x)$ & \multicolumn{2}{ c| }{$[P_n]$} \\
\hline
1  & $x$ & \multicolumn{2}{ c| }{
\scalebox{\c}{
\begin{tikzpicture}
\begin{scope}[every node/.style={circle,thick,draw}]
    \node (1) at (0,0*\m) {};   
\end{scope}

\node[] () [below = 1.5em] at (0,3.5*\h) {};

\end{tikzpicture}}
} \\ \hline
2  & $x^2+2x$ & \multicolumn{2}{ c| }{
\scalebox{\c}{
\begin{tikzpicture}
\begin{scope}[every node/.style={circle,thick,draw}]
    \node (1) at (0,0*\m) {};
    \node (2) at (1,1*\m) {};   
\end{scope}

\node[] () [below = 1.5em] at (0,3.5*\h) {};

\begin{scope}
    \path [-] (1) edge node {} (2);
\end{scope}
\end{tikzpicture}}
} \\ \hline
3  & $x^3+3x^2+x$ & \multicolumn{2}{ c| }{
\scalebox{\c}{
\begin{tikzpicture}
\begin{scope}[every node/.style={circle,thick,draw}]
    \node (1) at (0,0*\m) {};
    \node (2) at (1,1*\m) {};
    \node (3) at (2,2*\m) {};    
\end{scope}

\node[] () [below = 1.5em] at (0,3.5*\h) {};

\begin{scope}
    \path [-] (1) edge node {} (2);
    \path [-] (2) edge node {} (3);
\end{scope}
\end{tikzpicture}}
} \\ \hline
4  & $x^4+4x^3+4x^2$ 
& 
\scalebox{\c}{
\begin{tikzpicture}
\begin{scope}[every node/.style={circle,thick,draw}]
    \node (1) at (0,0*\m) {};
    \node (2) at (1,1*\m) {};
    \node (3) at (2,2*\m) {};   
    \node (4) at (3,3*\m) {};    
\end{scope}

\node[] () [below = 1.5em] at (0,4.5*\h) {};

\begin{scope}
    \path [-] (1) edge node {} (2);
    \path [-] (2) edge node {} (3);
    \path [-] (3) edge node {} (4);
\end{scope}
\end{tikzpicture}}
&
\scalebox{\c}{
\begin{tikzpicture}
\begin{scope}[every node/.style={circle,thick,draw}]
    \node (1) at (0,0*\m) {};
    \node (2) at (1,1*\m) {};
    \node (3) at (2,2*\m) {};   
    \node (4) at (3,3*\m) {};    
\end{scope}

\node[] () [below = 1.5em] at (0,4.5*\h) {};

\begin{scope}
    \path [-] (1) edge node {} (2);
    \path [-] (3) edge node {} (4);
\end{scope}
\end{tikzpicture}}
\\ \hline
5  & $x^5+5x^4+8x^3+3x^2$ 
& 
\scalebox{\c}{
\begin{tikzpicture}
\begin{scope}[every node/.style={circle,thick,draw}]
    \node (1) at (0,0*\m) {};
    \node (2) at (1,1*\m) {};
    \node (3) at (2,2*\m) {};   
    \node (4) at (3,3*\m) {};   
    \node (5) at (4,4*\m) {};
\end{scope}

\node[] () [below = 1.5em] at (0,5.5*\h) {};

\begin{scope}
    \path [-] (1) edge node {} (2);
    \path [-] (2) edge node {} (3);
    \path [-] (3) edge node {} (4);
    \path [-] (4) edge node {} (5);
    
\end{scope}
\end{tikzpicture}}
&
\scalebox{\c}{
\begin{tikzpicture}
\begin{scope}[every node/.style={circle,thick,draw}]
    \node (1) at (0,0*\m) {};
    \node (2) at (1,1*\m) {};
    \node (3) at (2,2*\m) {};   
    \node (4) at (3,3*\m) {};   
    \node (5) at (4,4*\m) {};
\end{scope}

\node[] () [below = 1.5em] at (0,5.5*\h) {};

\begin{scope}
    \path [-] (1) edge node {} (2);
    \path [-] (2) edge node {} (3);
    \path [-] (3) edge node {} (4);
    \path [-] (4) edge node {} (5);
    
    \path [-] (2) edge[bend left=45] node {} (4);
\end{scope}
\end{tikzpicture}}
\\ \hline
6  & $x^6+6x^5+13x^4+10x^3+x^2$
& 
\scalebox{\c}{
\begin{tikzpicture}
\begin{scope}[every node/.style={circle,thick,draw}]
    \node (1) at (0,0*\m) {};
    \node (2) at (1,1*\m) {};
    \node (3) at (2,2*\m) {};   
    \node (4) at (3,3*\m) {};   
    \node (5) at (4,4*\m) {};
    \node (6) at (5,5*\m) {};
\end{scope}

\node[] () [below = 1.5em] at (0,6.5*\h) {};

\begin{scope}
    \path [-] (1) edge node {} (2);
    \path [-] (2) edge node {} (3);
    \path [-] (3) edge node {} (4);
    \path [-] (4) edge node {} (5);
    \path [-] (5) edge node {} (6);
    
\end{scope}
\end{tikzpicture}}
&
\scalebox{\c}{
\begin{tikzpicture}
\begin{scope}[every node/.style={circle,thick,draw}]
    \node (1) at (0,0*\m) {};
    \node (2) at (1,1*\m) {};
    \node (3) at (2,2*\m) {};   
    \node (4) at (3,3*\m) {};   
    \node (5) at (4,4*\m) {};
    \node (6) at (5,5*\m) {};
\end{scope}

\node[] () [below = 1.5em] at (0,6.5*\h) {};

\begin{scope}
    \path [-] (1) edge node {} (2);
    \path [-] (2) edge node {} (3);
    \path [-] (3) edge node {} (4);
    \path [-] (4) edge node {} (5);
    \path [-] (5) edge node {} (6);
    
    \path [-] (2) edge[bend left=45] node {} (5);
\end{scope}
\end{tikzpicture}}
\\ \hline
\multicolumn{1}{ |c  }{\multirow{2}{*}{7} } &
\multicolumn{1}{ |c| }{\multirow{2}{*}{$x^7+7x^6+19x^5+22x^4+8x^3$}}
& 
\scalebox{\c}{
\begin{tikzpicture}
\begin{scope}[every node/.style={circle,thick,draw}]
    \node (1) at (0,0*\m) {};
    \node (2) at (1,1*\m) {};
    \node (3) at (2,2*\m) {};   
    \node (4) at (3,3*\m) {};   
    \node (5) at (4,4*\m) {};
    \node (6) at (5,5*\m) {};
    \node (7) at (6,6*\m) {};
\end{scope}

\node[] () [below = 1.5em] at (0,7.5*\h) {};

\begin{scope}
    \path [-] (1) edge node {} (2);
    \path [-] (2) edge node {} (3);
    \path [-] (3) edge node {} (4);
    \path [-] (4) edge node {} (5);
    \path [-] (5) edge node {} (6);
    \path [-] (6) edge node {} (7);
    
\end{scope}
\end{tikzpicture}}
&
\scalebox{\c}{
\begin{tikzpicture}
\begin{scope}[every node/.style={circle,thick,draw}]
    \node (1) at (0,0*\m) {};
    \node (2) at (1,1*\m) {};
    \node (3) at (2,2*\m) {};   
    \node (4) at (3,3*\m) {};   
    \node (5) at (4,4*\m) {};
    \node (6) at (5,5*\m) {};
    \node (7) at (6,6*\m) {};
\end{scope}

\node[] () [below = 1.5em] at (0,7.5*\h) {};

\begin{scope}
    \path [-] (1) edge node {} (2);
    \path [-] (2) edge node {} (3);
    \path [-] (3) edge node {} (4);
    \path [-] (4) edge node {} (5);
    \path [-] (5) edge node {} (6);
    \path [-] (6) edge node {} (7);
    
    \path [-] (2) edge[bend left=45] node {} (6);
\end{scope}
\end{tikzpicture}}
\\
\cline{3-4}
\multicolumn{1}{ |c  }{}                        &
\multicolumn{1}{ |c| }{} 
& 
\scalebox{\c}{
\begin{tikzpicture}
\begin{scope}[every node/.style={circle,thick,draw}]
    \node (1) at (0*\r,1.5*\r) {};
    \node (2) at (-1.5*\r,0) {};
    \node (3) at (0*\r,-1.5*\r) {};
    \node (4) at (1.5*\r,0) {};

    \node (6) at (-1.426584774*\r, 3-.4635254916*\r) {};
    \node (7) at (1.426584774*\r, 3-.4635254916*\r) {};
    \node (8) at (2.426584774*\r, 4-.4635254916*\r) {};

\end{scope}   

\node[] () [below = 1.5em] at (0,5) {};

\begin{scope}
    \path [-] (1) edge node {} (2);
    \path [-] (2) edge node {} (3);
    \path [-] (3) edge node {} (4);
    \path [-] (4) edge node {} (1);
    
    \path [-] (6) edge node {} (1);
    \path [-] (7) edge node {} (8);
\end{scope}
\end{tikzpicture}}
&
\scalebox{\c}{
\begin{tikzpicture}
\begin{scope}[every node/.style={circle,thick,draw}]
    \node (1) at (0*\r,1.5*\r) {};
    \node (2) at (-1.5*\r,0) {};
    \node (3) at (0*\r,-1.5*\r) {};
    \node (4) at (1.5*\r,0) {};

    \node (6) at (-1.426584774*\r, 3-.4635254916*\r) {};
    \node (7) at (1.426584774*\r, 3-.4635254916*\r) {};
    \node (8) at (2.426584774*\r, 4-.4635254916*\r) {};

\end{scope}   

\node[] () [below = 1.5em] at (0,5) {};

\begin{scope}
    \path [-] (1) edge node {} (2);
    \path [-] (2) edge node {} (3);
    \path [-] (3) edge node {} (4);
    \path [-] (4) edge node {} (1);
    
    \path [-] (6) edge node {} (1);
   \path [-] (7) edge node {} (1);
    \path [-] (7) edge node {} (8);
\end{scope}
\end{tikzpicture}}
\\ \hline
\multicolumn{1}{ |c  }{\multirow{2}{*}{8} } &
\multicolumn{1}{ |c| }{\multirow{2}{*}{$x^8+8x^7+26x^6+40x^5+26x^4+4x^3$}}
& 
\scalebox{\c}{
\begin{tikzpicture}
\begin{scope}[every node/.style={circle,thick,draw}]
    \node (1) at (0,0*\m) {};
    \node (2) at (1,1*\m) {};
    \node (3) at (2,2*\m) {};   
    \node (4) at (3,3*\m) {};   
    \node (5) at (4,4*\m) {};
    \node (6) at (5,5*\m) {};
    \node (7) at (6,6*\m) {};
    \node (8) at (7,7*\m) {}; 
\end{scope}

\node[] () [below = 1.5em] at (0,8.5*\h) {};

\begin{scope}
    \path [-] (1) edge node {} (2);
    \path [-] (2) edge node {} (3);
    \path [-] (3) edge node {} (4);
    \path [-] (4) edge node {} (5);
    \path [-] (5) edge node {} (6);
    \path [-] (6) edge node {} (7);
    \path [-] (7) edge node {} (8);
\end{scope}
\end{tikzpicture}}
&
\scalebox{\c}{
\begin{tikzpicture}
\begin{scope}[every node/.style={circle,thick,draw}]
    \node (1) at (0,0*\m) {};
    \node (2) at (1,1*\m) {};
    \node (3) at (2,2*\m) {};   
    \node (4) at (3,3*\m) {};   
    \node (5) at (4,4*\m) {};
    \node (6) at (5,5*\m) {};
    \node (7) at (6,6*\m) {};
    \node (8) at (7,7*\m) {}; 
\end{scope}

\node[] () [below = 1.5em] at (0,8.5*\h) {};

\begin{scope}
    \path [-] (1) edge node {} (2);
    \path [-] (2) edge node {} (3);
    \path [-] (3) edge node {} (4);
    \path [-] (4) edge node {} (5);
    \path [-] (5) edge node {} (6);
    \path [-] (6) edge node {} (7);
    \path [-] (7) edge node {} (8);
    
    \path [-] (2) edge[bend left=45] node {} (7);
\end{scope}
\end{tikzpicture}}
\\
\cline{3-4}
\multicolumn{1}{ |c  }{}                        &
\multicolumn{1}{ |c| }{} 
& 
\scalebox{\c}{
\begin{tikzpicture}
\begin{scope}[every node/.style={circle,thick,draw}]
    \node (1) at (0*\r,1.5*\r) {};
    \node (2) at (-1.426584774*\r, .4635254916*\r) {};
    \node (3) at (-.8816778783*\r, -1.213525492*\r) {};
    \node (4) at (.8816778783*\r, -1.213525492*\r) {};
    \node (5) at (1.426584774*\r, .4635254916*\r) {};
    
    \node (6) at (-1.426584774*\r, 3-.4635254916*\r) {};
    \node (7) at (1.426584774*\r, 3-.4635254916*\r) {};
    \node (8) at (2.426584774*\r, 4-.4635254916*\r) {};

\end{scope}   

\node[] () [below = 1.5em] at (0,5) {};

\begin{scope}
    \path [-] (1) edge node {} (2);
    \path [-] (2) edge node {} (3);
    \path [-] (3) edge node {} (4);
    \path [-] (4) edge node {} (5);
    \path [-] (5) edge node {} (1);
    
    \path [-] (6) edge node {} (1);
    \path [-] (7) edge node {} (8);
\end{scope}
\end{tikzpicture}}
&
\scalebox{\c}{
\begin{tikzpicture}
\begin{scope}[every node/.style={circle,thick,draw}]
    \node (1) at (0*\r,1.5*\r) {};
    \node (2) at (-1.426584774*\r, .4635254916*\r) {};
    \node (3) at (-.8816778783*\r, -1.213525492*\r) {};
    \node (4) at (.8816778783*\r, -1.213525492*\r) {};
    \node (5) at (1.426584774*\r, .4635254916*\r) {};
    
    \node (6) at (-1.426584774*\r, 3-.4635254916*\r) {};
    \node (7) at (1.426584774*\r, 3-.4635254916*\r) {};
    \node (8) at (2.426584774*\r, 4-.4635254916*\r) {};

\end{scope}   

\begin{scope}
    \path [-] (1) edge node {} (2);
    \path [-] (2) edge node {} (3);
    \path [-] (3) edge node {} (4);
    \path [-] (4) edge node {} (5);
    \path [-] (5) edge node {} (1);
    
    \path [-] (6) edge node {} (1);
    \path [-] (7) edge node {} (1);
    \path [-] (7) edge node {} (8);
\end{scope}
\end{tikzpicture}}
\\ \hline
\end{tabular}
\end{center}
\caption{The domination equivalence classes for paths up to length eight}
\label{tab:EqP13-8}
\end{table}

\begin{theorem}
\label{thm:EqPn}
Let $F_{i}$ ($i\geq 3$) denote the graph that consists of a cycle $C_{i}$ with a pendant edge (that is, one of the vertices $v_{i}$ of the cycle is attached to a new vertex of degree $1$), and $H_{i}$ denote the graph formed from $F_{i}$ and $K_{2}$ by adding in an edge between the stem in $F_{i}$ and a vertex of $K_{2}$.  Then 
\begin{itemize}
\item $[P_{n}] = \{P_{n}\}$ if $n \leq 3$,
\item $[P_{4}] = \{P_{4}, 2P_{2} \}$,
\item $[P_{n}] =\{ P_{n}, \widetilde{P_{n}}, F_{n-3} \cup K_{2}, H_{n-3}\}$ for $n = 7,~8$, and
\item $[P_{n}] =\{ P_{n}, \widetilde{P_{n}} \}$ otherwise.
\end{itemize}
\end{theorem}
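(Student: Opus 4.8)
The plan is to split the argument into two regimes: the generic range $n \geq 9$, where the structural reductions already proved do most of the work, and the finitely many small cases $n \leq 8$, which I would settle by direct computation as recorded in Table~\ref{tab:EqP13-8}. Throughout, the containments are the easy direction. Since the edge joining the two stems of $P_n$ is irrelevant, $D(\widetilde{P_n},x)=D(P_n,x)$, so $\widetilde{P_n}\in[P_n]$ (and $\widetilde{P_n}\not\cong P_n$ for $n\geq 4$); the memberships $2P_2\in[P_4]$ and $F_{n-3}\cup K_2,\,H_{n-3}\in[P_n]$ for $n=7,8$ are verified by evaluating the multiplicative formula $D(G_1\cup\cdots\cup G_m,x)=\prod_i D(G_i,x)$ on the listed graphs. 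The substance of the theorem is the reverse inclusion: no graph outside these lists can share the domination polynomial.

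So fix $n\geq 9$ and suppose $G\sim_{\mathcal{D}} P_n$. By Lemma~\ref{lem:EqPCC} I may write $G=H\cup C_{a_1}\cup\cdots\cup C_{a_j}$ with $H\in\{P_k,\widetilde{P_k}\}$, $j\leq 2$, and each $a_i\geq 3$. Because the stem-joining edge is irrelevant, $D(H,x)=D(P_k,x)$ no matter which of the two possibilities $H$ is, so the hypothesis $D(G,x)=D(P_n,x)$ becomes
\[
D(P_k,x)\prod_{i=1}^{j}D(C_{a_i},x)=D(P_n,x).
\]
Comparing degrees gives $k+\sum_i a_i=n$, and Corollary~\ref{cor:PathNo4cycle} forces every $a_i\not\equiv 0\pmod 4$, so by Lemma~\ref{lem:Cn-1} each cycle factor satisfies $D(C_{a_i},-1)=-1$. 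The target is $j=0$; once that is established, degree comparison yields $k=n$ and hence $G\in\{P_n,\widetilde{P_n}\}$, completing the reverse inclusion.

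To kill the cases $j=1$ and $j=2$ I would evaluate the displayed identity together with its first three derivatives at $x=-1$. Expanding $\bigl(D(P_k,\cdot)\prod_i D(C_{a_i},\cdot)\bigr)^{(m)}$ by the product rule at $-1$ and inserting the closed forms of Lemmas~\ref{lem:Cn-1}--\ref{lem:C'''n-1} for the cycle factors and Lemmas~\ref{lem:Pn-1}--\ref{lem:P'''n-1} for the path factor, each of $D(P_n,-1),D'(P_n,-1),D''(P_n,-1),D'''(P_n,-1)$ turns into an explicit expression in $n,k$ and the $a_i$, whose form depends only on these integers modulo $4$. This yields, for each admissible residue tuple, an over-determined system of four polynomial equations in the at most two free cycle lengths. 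Since $D''(P_n,-1)$ is quadratic and $D'''(P_n,-1)$ is cubic (or quadratic) in $n$, matching leading terms pins down the residues, after which the remaining comparisons reduce to numerical identities in $n$ that fail for every $n\geq 9$; this is exactly where the bound $n\geq 9$ is consumed.

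I expect the residue bookkeeping in that last step to be the main obstacle: there are four residue classes each for $n$, $k$, and every $a_i$, and one must confirm that every resulting system (for $j=0,1,2$) is inconsistent except in the no-cycle case. The third derivative is indispensable here, since for $j=2$ the value, first, and second derivatives alone leave one residue combination unresolved. Finally, for $n\leq 8$ the reduction of Lemma~\ref{lem:EqPUCk} is unavailable (it requires $n\geq 9$), so I would argue by finite verification, enumerating the candidate graphs and comparing domination polynomials as in Table~\ref{tab:EqP13-8}; the observation that $D(P_n,-2)=0$ precisely for $n=4,7,8$ lifts the ``no $K_2$ component'' restriction used earlier and thereby explains the appearance of the extra members $2P_2$, $F_{n-3}\cup K_2$, and $H_{n-3}$.
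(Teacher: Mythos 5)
Your proposal is correct and takes essentially the same route as the paper's own proof: apply Lemma~\ref{lem:EqPCC} to reduce $G$ to a path (or $\widetilde{P_k}$) plus at most two cycles of length $\not\equiv 0 \pmod 4$, then eliminate the one- and two-cycle cases by comparing $D$, $D'$, $D''$, $D'''$ at $x=-1$ through a residue analysis mod $4$, with $n \leq 8$ settled by direct computation as in Table~\ref{tab:EqP13-8}. The only inaccuracy is your remark that the final case analysis is "where the bound $n\geq 9$ is consumed": in fact that bound is spent earlier, in Lemma~\ref{lem:EqPUCk} via Corollary~\ref{pnminus2neq0} (to exclude $K_2$ components), and the contradictions in the residue cases (e.g.\ $n_2=1$, $n_1=-1$, or no real solutions) hold unconditionally, which does not affect the validity of your argument.
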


\begin{proof}

From previous remarks, it suffices to only consider $n \geq 9$. Let $G$ be a graph which is $\mathcal{D}$-equivalent to $P_n$. By Lemma \ref{lem:EqPCC} $G=H \cup C$ where $H \in \{P_{n_1},\widetilde{P_{n_1}}\}$ with $n_1 \leq n$ and $C$ is the disjoint union of at most two cycles. Therefore either $G=H$, $G=H \cup C_{n_2}$, or  $G=H \cup C_{n_2} \cup C_{n_3}$. It is sufficient to show the latter two cases always yield a contradiction. We will do so by evaluating $D(P_n,-1), \ldots, D'''(P_n,-1)$ and $D(G,-1), \ldots, D'''(G,-1)$ for all cases $n_1,n_2,n_3 \equiv 0,1,2,3 \pmod{4}$ and showing each case contradicts $D(G,x) = D(P_{n},x)$. By Lemma~\ref{lem:Cn-1} there can be no cycles with order congruent to $0 \pmod {4}$, there are $12$ cases to consider for one cycle (without loss $n_1 \equiv 0,1,2,3 \pmod{4}$ and $n_2 \equiv 0,1,2,3 \pmod{4}$) and similarly $24$ cases for two cycles. In each case, we can derive a contradiction. 

We begin with the situation of only one cycle, so that, without loss, $n_3 = 0$ and $G = P_{n_1} \cup C_{n_2}$. 
By Theorem \ref{thm:DomPolyGeo}, taking the first three derivatives of $D(G,x)$ we obtain the following system of equations:

\begin{center}
\begin{tabular}{r c l r}
$D(P_n,-1)$ & $=$ & $ D(P_{n_1},-1)D(C_{n_2},-1)$ & $(PC0)$ \\
		    &     &  \\
$D'(P_n,-1)$ & $=$ & $ D'(P_{n_1},-1)D(C_{n_2},-1)+ D(P_{n_1},-1)D'(C_{n_2},-1)$ & $(PC1)$ \\
		    &     &  \\
$D''(P_n,-1)$ & $=$ & $ D''(P_{n_1},-1)D(C_{n_2},-1)+ 2D'(P_{n_1},-1)D'(C_{n_2},-1)$\\
		    &     & $+ D(P_{n_1},-1)D''(C_{n_2},-1)$ & $(PC2)$ \\
		    &     &  \\
$D'''(P_n,-1)$ & $=$ & $D'''(P_{n_1},-1)D(C_{n_2},-1)+3D''(P_{n_1},-1)D'(C_{n_2},-1)$\\
		    &     & $+ 3D'(P_{n_1},-1)D''(C_{n_2},-1)+ D(P_{n_1},-1)D'''(C_{n_2},-1)$ & $(PC3)$ \\

\end{tabular}
\end{center}

\noindent There are $12$ cases to consider.

\setcounter{CaseCount}{0}
\def\c{5mm}
\vspace{\c}
\addtocounter{CaseCount}{1}
\noindent \textbf{Case \arabic{CaseCount}:}  \emph{$n_1 \equiv 0$, $n_2 \equiv 1 \pmod{4}$}
\vspace{\c}

\noindent As $n \equiv n_1+n_2$ $\pmod{4}$, $n \equiv 1$ $\pmod{4}$, and so equation $(PC1)$ reduces to

$$ \frac{n+1}{2}=n_2 $$

\noindent and equation $(PC2)$ reduces to

$$-\frac{(n-1)^2}{8} = \frac{n_1(n_1+4)}{8}-\frac{n_2(n_2-1)}{2}.$$

\noindent Therefore $n=2n_2-1$. As $n = n_1 + n_2$, $n_1 = n_2-1$. So by substituting this into the reduced equation $(PC2)$ and multiplying both sides by 8 we obtain

$$-(2n_2-2)^2 = (n_2-1)(n_2+3)-4n_2(n_2-1).$$

\noindent Simplifying, we are left with 

$$(n_2-1)^2=0.$$

\noindent Therefore $n_2=1$. As $n_2 \geq 3$, this is a contradiction.

\vspace{\c}
\addtocounter{CaseCount}{1}
\noindent \textbf{Case \arabic{CaseCount}:}  \emph{$n_1 \equiv 0$, $n_2 \equiv 2$ $\pmod{4}$}
\vspace{\c}

\noindent As $n \equiv n_1+n_2$ $\pmod{4}$, $n \equiv 2$ $\pmod{4}$, and so equation $(PC2)$ reduces to

$$\frac{(n+2)^2}{8} = \frac{n_1(n_1+4)}{8}+\frac{n_2(n_2+2)}{4},$$

\noindent and equation $(PC3)$ reduces to

$$-\frac{n^3}{16}+\frac{n}{4} = -\frac{n_1^3}{16}+n_1-\frac{3n_2^3}{16}+\frac{3n_2}{4}.$$

\noindent We will now substitute $n=n_1+n_2$ into the reduced equation $(PC2)$:

\vspace{5mm}

\begin{tabular}{ l c l }
$0$ & $=$ & $\frac{1}{8}(n_1+n_2+2)^2-(\frac{1}{8}n_1(n_1+4)+\frac{1}{4}n_2(n_2+2))$ \\ 
$0$ & $=$ & $(n_1+n_2+2)^2-n_1(n_1+4)-2n_2(n_2+2)$ \\ 
$0$ & $=$ & $n_1^2+n_2^2+4+2n_1n_2+4n_1+4n_2-n_1^2-4n_1-2n_2^2-4n_2$ \\ 
$0$ & $=$ & $-n_2^2+2n_1n_2+4$ \\ 				 
\end{tabular}

\vspace{5mm}

\noindent Therefore $n_1 = \frac{n_2^2-4}{2n_2}$. By substituting this and $n=n_1+n_2$ into the reduced equation $(PC3)$ and multiplying by $n_2$ we obtain

\vspace{5mm}

$$0 = -\frac{1}{4}n_2^4-2n_2^2+12.$$

\noindent We obtain the solutions $n_2 = -2,2,-2\sqrt{3}i \text{ or } 2\sqrt{3}i$. As $n_2$ is order of the cycle, $n_2 \geq 3$ and real, a contradiction for all four solutions.

\vspace{\c}
\addtocounter{CaseCount}{1}
\noindent \textbf{Case \arabic{CaseCount}:}  \emph{$n_1 \equiv 0$, $n_2 \equiv 3$ $\pmod{4}$}
\vspace{\c}

\noindent As $n \equiv n_1+n_2$ $\pmod{4}$, $n \equiv 3$ $\pmod{4}$ and $D(P_n,-1)=1$. However $D(G,-1)=D(P_{n_1},-1)D(C_{n_2},-1)=(1)(-1)=-1$ which is a contraction.

\newpage

\vspace{\c}
\addtocounter{CaseCount}{1}
\noindent \textbf{Case \arabic{CaseCount}:}  \emph{$n_1 \equiv 1$, $n_2 \equiv 1$ $\pmod{4}$}
\vspace{\c}

\noindent, $n \equiv 2$ $\pmod{4}$ and $D(P_n,-1)=-1$. However $D(G,-1)=D(P_{n_1},-1)D(C_{n_2},-1)=(-1)(-1)=1$, which is a contraction.

\vspace{\c}
\addtocounter{CaseCount}{1}
\noindent \textbf{Case \arabic{CaseCount}:}  \emph{$n_1 \equiv 1$, $n_2 \equiv 2$ $\pmod{4}$}
\vspace{\c}

\noindent As $n \equiv n_1+n_2$ $\pmod{4}$, $n \equiv 3$ $\pmod{4}$, and so equation $(PC1)$ reduces to

$$ -\frac{1}{2}(n+1) = -\frac{1}{2}(n_1+1).$$

\noindent This implies $n=n_1$. However $n=n_1+n_2$, so $n_2=0$, which is a contradiction.

\vspace{\c}
\addtocounter{CaseCount}{1}
\noindent \textbf{Case \arabic{CaseCount}:}  \emph{$n_1 \equiv 1$, $n_2 \equiv 3$ $\pmod{4}$}
\vspace{\c}

\noindent As $n \equiv n_1+n_2$ $\pmod{4}$, $n \equiv 0$ $\pmod{4}$, and so equation $(PC1)$ reduces to

$$ 0 = -\frac{1}{2}(n_1+1).$$

\noindent This implies $n_1=-1$, which is a contradiction.

\vspace{\c}
\addtocounter{CaseCount}{1}
\noindent \textbf{Case \arabic{CaseCount}:}  \emph{$n_1 \equiv 2$, $n_2 \equiv 1$ $\pmod{4}$}
\vspace{\c}

\noindent As $n \equiv n_1+n_2$ $\pmod{4}$, $n \equiv 3$ $\pmod{4}$, and so equation $(PC1)$ reduces to

$$ -\frac{n+1}{2}=-n_2, $$

\noindent and equation $(PC2)$ reduces to

$$\frac{1}{8}(n-3)(n+1) = -\frac{1}{8}(n_1+2)^2+\frac{1}{2}n_2(n_2-1).$$

\noindent Therefore $n=2n_2-1$. As $n = n_1 + n_2$, $n_1 = n_2-1$. By substituting this into the reduced equation $(PC2)$ and multiplying both sides by 8 we obtain

$$(2n_2-4)(2n_2) = -(n_2+1)^2+4n_2(n_2-1).$$

\noindent Bringing everything to one side and simplifying we are left with 

$$(n_2-1)^2=0.$$

\noindent Therefore $n_2=1$. However as $n_2 \geq 3$, this is a contradiction.

\vspace{\c}
\addtocounter{CaseCount}{1}
\noindent \textbf{Case \arabic{CaseCount}:}  \emph{$n_1 \equiv 2$, $n_2 \equiv 2$ $\pmod{4}$}
\vspace{\c}

\noindent As $n \equiv n_1+n_2$ $\pmod{4}$, $n \equiv 0$ $\pmod{4}$, and so equation $(PC2)$ reduces to

$$-\frac{1}{8}n(n+4) = -\frac{1}{8}(n_1+2)^2-\frac{1}{4}n_2(n_2+2),$$

\noindent and equation $(PC3)$ reduces to

$$\frac{1}{16}n^3-n = \frac{1}{16}n_1^3-\frac{1}{4}n_1+\frac{3}{16}n_2^3-\frac{3}{4}n_2.$$

\noindent  We will now substitute $n=n_1+n_2$ into the reduced equation $(PC2)$.

\vspace{5mm}

\begin{tabular}{ l c l }
$0$ & $=$ & $-\frac{1}{8}(n_1+n_2)(n_1+n_2+4)-(-\frac{1}{8}(n_1+2)^2-\frac{1}{4}n_2(n_2+2))$ \\ 
$0$ & $=$ & $-(n_1+n_2)(n_1+n_2+4)+(n_1+2)^2+2n_2(n_2+2)$ \\ 
$0$ & $=$ & $-n_1^2-2n_1n_2-n_2^2-4n_1-4n_2+n_1^2+4n_1+4+2n_2^2+4n_2$ \\
$0$ & $=$ & $-2n_1n_2+4+n_2^2$ \\			 
\end{tabular}

\vspace{5mm}

\noindent Therefore $n_1 = \frac{n_2^2+4}{2n_2}$. By substituting this  and $n=n_1+n_2$ into the reduced equation $(PC3)$ and multiplying by $n_2$ we obtain

\vspace{5mm}

$$0 = \frac{1}{4}n_2^4+2n_2^2-12.$$

\noindent We obtain the solutions $n_2 = -2,2,-2\sqrt{3}i \text{ or } 2\sqrt{3}i$. As $n_2$ is order of the cycle, $n_2 \geq 3$ and real. This is a contradiction for all four solutions.

\vspace{\c}
\addtocounter{CaseCount}{1}
\noindent \textbf{Case \arabic{CaseCount}:}  \emph{$n_1 \equiv 2$, $n_2 \equiv 3$ $\pmod{4}$}
\vspace{\c}

\noindent As $n \equiv n_1+n_2$ $\pmod{4}$, $n \equiv 1$ $\pmod{4}$ and $D(P_n,-1)=-1$. However $D(G,-1)=D(P_{n_1},-1)D(C_{n_2},-1)=(-1)(-1)=1$, which is a contraction.

\vspace{\c}
\addtocounter{CaseCount}{1}
\noindent \textbf{Case \arabic{CaseCount}:}  \emph{$n_1 \equiv 3$, $n_2 \equiv 1$ $\pmod{4}$}
\vspace{\c}

\noindent As $n \equiv n_1+n_2$ $\pmod{4}$, $n \equiv 0$ $\pmod{4}$ and $D(P_n,-1)=1$. However $D(G,-1)=D(P_{n_1},-1)D(C_{n_2},-1)=(1)(-1)=-1$, which is a contraction.

\vspace{\c}
\addtocounter{CaseCount}{1}
\noindent \textbf{Case \arabic{CaseCount}:}  \emph{$n_1 \equiv 3$, $n_2 \equiv 2$ $\pmod{4}$}
\vspace{\c}

\noindent As $n \equiv n_1+n_2$ $\pmod{4}$, $n \equiv 1$ $\pmod{4}$ and equation $(PC1)$ reduces to

$$\frac{1}{2}(n+1) = \frac{1}{2}(n_1+1).$$

\noindent This implies $n=n_1$. However $n=n_1+n_2$, so $n_2=0$, which is a contradiction.

\vspace{\c}
\addtocounter{CaseCount}{1}
\noindent \textbf{Case \arabic{CaseCount}:}  \emph{$n_1 \equiv 3$, $n_2 \equiv 3$ $\pmod{4}$}
\vspace{\c}

\noindent As $n \equiv n_1+n_2$ $\pmod{4}$, $n \equiv 2$ $\pmod{4}$ and equation $(PC1)$ reduces to

$$ 0 = \frac{1}{2}(n_1+1).$$

\noindent This implies $n_1=-1$, which is a contradiction.

\vspace{0.1in}

As each of the $12$ cases result in a contradiction, $G$ is not a disjoint union of $H$ and one cycle, where $H \in \{P_{n_1},\widetilde{P_{n_1}}\}$. We will now consider whether $G$ can be a disjoint union of $H$ and two cycles; this yields (without loss) $24$ cases, a number of which can be handled quickly, although some are more involved than the others. Here we have $G = P_{n_1} \cup C_{n_2} \cup C_{n_3}$. By Theorem \ref{thm:DomPolyGeo} taking the first three derivatives of $D(G,x)$ we obtain the following system of equations:

\vspace{5mm}

\begin{center}
\begin{tabular}{r c l r}
$D(P_n,-1)$ & $=$ & $ D(P_{n_1},-1)D(C_{n_2},-1)D(C_{n_3},-1)$ &$(PCC0)$ \\
		    &     &  \\
$D'(P_n,-1)$ & $=$ & $ D'(P_{n_1},-1)D(C_{n_2},-1)D(C_{n_3},-1)+D(P_{n_1},-1)D'(C_{n_2},-1)D(C_{n_3},-1)$ \\
            &     & $+D(P_{n_1},-1)D(C_{n_2},-1)D'(C_{n_3},-1)$&$(PCC1)$ \\
		    &     &  \\
$D''(P_n,-1)$ & $=$ & $ D''(P_{n_1},-1)D(C_{n_2},-1)D(C_{n_3},-1)+D(P_{n_1},-1)D''(C_{n_2},-1)D(C_{n_3},-1)$ \\
            &     & $+D(P_{n_1},-1)D(C_{n_2},-1)D''(C_{n_3},-1)+2D'(P_{n_1},-1)D'(C_{n_2},-1)D(C_{n_3},-1)$&$(PCC2)$  \\
            &     & $+2D'(P_{n_1},-1)D(C_{n_2},-1)D'(C_{n_3},-1)+2D(P_{n_1},-1)D'(C_{n_2},-1)D'(C_{n_3},-1)$\\
            &     &  \\
$D'''(P_n,-1)$ & $=$ & $ D'''(P_{n_1},-1)D(C_{n_2},-1)D(C_{n_3},-1)+D(P_{n_1},-1)D'''(C_{n_2},-1)D(C_{n_3},-1)$ \\
            &     & $+D(P_{n_1},-1)D(C_{n_2},-1)D'''(C_{n_3},-1)+3D''(P_{n_1},-1)D'(C_{n_2},-1)D(C_{n_3},-1)$ \\
            &     & $+3D'(P_{n_1},-1)D''(C_{n_2},-1)D(C_{n_3},-1)+3D''(P_{n_1},-1)D(C_{n_2},-1)D'(C_{n_3},-1)$&$(PCC3)$ \\
            &     & $+3D'(P_{n_1},-1)D(C_{n_2},-1)D''(C_{n_3},-1)+3D(P_{n_1},-1)D''(C_{n_2},-1)D'(C_{n_3},-1)$ \\
            &     & $+3D(P_{n_1},-1)D'(C_{n_2},-1)D''(C_{n_3},-1)+6D'(P_{n_1},-1)D'(C_{n_2},-1)D'(C_{n_3},-1)$ \\

\end{tabular}
\end{center}

\setcounter{CaseCount}{0}
\def\c{5mm}
\vspace{\c}
\addtocounter{CaseCount}{1}
\noindent \textbf{Case \arabic{CaseCount}:}  \emph{$n_1 \equiv 0$, $n_2 \equiv 1$, $n_3 \equiv 1$ $\pmod{4}$}
\vspace{\c}

\noindent As $n \equiv n_1+n_2+n_3$ $\pmod{4}$, $n \equiv 2$ $\pmod{4}$ and by Lemma \ref{lem:Pn-1}, $D(P_n,-1)=-1$. However by Lemma \ref{lem:Cn-1} and Lemma \ref{lem:Pn-1}, $D(P_{n_1},-1)=1$ and $D(C_{n_2},-1)=D(C_{n_3},-1)=-1$ so $D(G,-1)=1$, which is a contradiction.

\vspace{\c}
\addtocounter{CaseCount}{1}
\noindent \textbf{Case \arabic{CaseCount}:}  \emph{$n_1 \equiv 0$, $n_2 \equiv 1$, $n_3 \equiv 2$ $\pmod{4}$}
\vspace{\c}

\noindent As $n \equiv n_1+n_2+n_3$ $\pmod{4}$, $n \equiv 3$ $\pmod{4}$, and so equation $(PCC1)$ reduces to

$$ -\frac{n+1}{2}=-n_2 $$

\noindent and equation $(PCC2)$ reduces to

$$\frac{1}{8}(n-3)(n+1) = -\frac{1}{8}n_1(n_1+4)-\frac{1}{4}n_3(n_3+2)+\frac{1}{2}n_2(n_2-1).$$

\noindent Therefore $n=2n_2-1$. As $n = n_1 + n_2 + n_3$, $n_2 = n_1+n_3+1$ and $n = 2n_1+2n_3+1$. By substituting this into the reduced equation $(PCC2)$ and multiplying both sides by 8 we obtain

$$(2n_1+2n_3-2)(2n_1+2n_3+2) = -n_1(n_1+4)-2n_3(n_3+2)+4(n_1+n_3+1)(n_1+n_3),$$

\noindent which simplifies to

\vspace{5mm}

\begin{tabular}{ r c l }
$4(n_1+n_3)^2-4$ & $=$ & $-n_1(n_1+4)-2n_3(n_3+2)+4(n_1+n_3)^2+4(n_1+n_3)$ \\ 
$-4$             & $=$ & $-n_1^2-4n_1-2n_3^2-4n_3+4n_1+4n_3$ \\
$0$             & $=$ & $-n_1^2-2n_3^2+4.$ \\

\end{tabular}

\vspace{5mm}

\noindent As $n_3 \geq 3$, there are no solutions, which is a contradiction.

\vspace{\c}
\addtocounter{CaseCount}{1}
\noindent \textbf{Case \arabic{CaseCount}:}  \emph{$n_1 \equiv 0$, $n_2 \equiv 2$, $n_3 \equiv 2$ $\pmod{4}$}
\vspace{\c}

\noindent As $n \equiv n_1+n_2+n_3$ $\pmod{4}$, $n \equiv 0$ $\pmod{4}$, and so equation $(PCC2)$ reduces to

$$-\frac{1}{8}n(n+4) = -\frac{1}{8}n_1(n_1+4)-\frac{1}{4}n_3(n_3+2)-\frac{1}{4}n_2(n_2+2).$$

\noindent and equation $(PCC3)$ reduces to

$$\frac{1}{16}n^3-n = \frac{1}{16}n_1^3-n_1+\frac{3}{16}n_3^3-\frac{3}{4}n_3+\frac{3}{16}n_2^3-\frac{3}{4}n_2.$$

\noindent  We will now substitute $n=n_1+n_2+n_3$ into the the reduced equation $(PCC2)$:

$$n_2^2+n_3^2-2n_1n_2-2n_1n_3-2n_2n_3=0.$$

\noindent Therefore if we isolate for $n_1$ we find

$$n_1 = \frac{(n_2-n_3)^2}{2(n_2+n_3)}.$$

\noindent By substituting this and $n=n_1+n_2+n_3$ into the the reduced equation $(PCC3)$, multiplying by $64n_2+64n_3$, and simplifying we obtain

\begin{center}
\[n_2^4-8n_2^3n_3+30n_2^2n_3^2-8n_2n_3^3+n_3^4-16n_2^2-32n_2n_3-16n_3^2=0  \tag{9}\]
\end{center}

\noindent We have plotted the non-negative solutions to equation $(9)$ along with the line $n_3=8-n_2$ in Figure \ref{fig:solP0022}. We will show that any line $n_3=k-n_2$ which intersects the set of non-negative solutions to equation $(9)$ must have $k \leq 8$. Therefore we will be able to bound all solutions to equation $(9)$ with the bounds $n_3 \leq 8-n_2$and  $n_3, n_2 \geq 3$.

\begin{center}
\begin{figure}[h]
\centering
\includegraphics[scale=0.3]{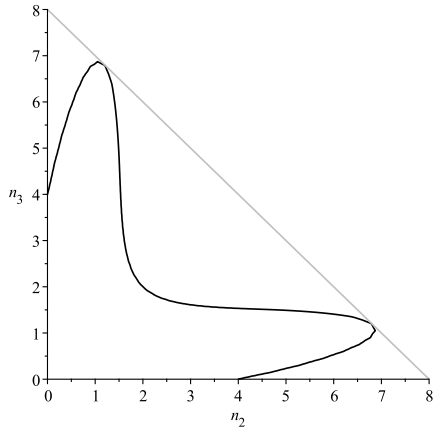}
\caption{Solutions to $n_2^4-8n_2^3n_3+30n_2^2n_3^2-8n_2n_3^3+n_3^4-12n_2^2-32n_2n_3-12n_3^2=0$}
\label{fig:solP0022}
\end{figure}
\end{center}

\noindent  We will show the line $n_3 = k-n_2$ only intersects the set of non-negative solutions to equation $(9)$ if $k \leq 8$. First substitute $n_3 = k-n_2$ into equation $(9)$ to obtain

$$48n_2^4-96kn_2^3+60k^2n_2^2-12k^3n_2+k^4-12k^2=0.$$

\noindent The solutions are

$$n_2 = \frac{1}{2}k \pm \frac{1}{12}\sqrt{18k^2 \pm 6k\sqrt{-3k^2+192}}.$$

\noindent Therefore $n_2$ is real only if $-3k^2+192 \geq 0$ and hence $k \leq 8$. Therefore the only remaining viable solutions are the 6 integer pairs bounded by $n_2,n_3 \geq 3$ and $n_3 \leq 8-n_2$. As none are solutions, this is a contradiction.

\vspace{\c}
\addtocounter{CaseCount}{1}
\noindent \textbf{Case \arabic{CaseCount}:}  \emph{$n_1 \equiv 0$, $n_2 \equiv 1$, $n_3 \equiv 3$ $\pmod{4}$}
\vspace{\c}

\noindent As $n \equiv n_1+n_2+n_3$ $\pmod{4}$, $n \equiv 0$ $\pmod{4}$ and by Lemma \ref{lem:P'n-1}, $D'(P_n,-1)=0$. However by Lemma \ref{lem:C'n-1} and Lemma \ref{lem:P'n-1} $D'(G,-1)=-n_2$, so $n_2=0$, which is a contradiction as $n_2 \geq 3$.

\vspace{\c}
\addtocounter{CaseCount}{1}
\noindent \textbf{Case \arabic{CaseCount}:}  \emph{$n_1 \equiv 0$, $n_2 \equiv 2$, $n_3 \equiv 3$ $\pmod{4}$}
\vspace{\c}

\noindent As $n \equiv n_1+n_2+n_3$ $\pmod{4}$, $n \equiv 1$ $\pmod{4}$ and by Lemma \ref{lem:Pn-1}, $D(P_n,-1)=-1$. However by Lemma \ref{lem:Cn-1} and Lemma \ref{lem:Pn-1}, $D(P_{n_1},-1)=1$ and $D(C_{n_2},-1)=D(C_{n_3},-1)=-1$ so $D(G,-1)=1$, which is a contradiction.

\vspace{\c}
\addtocounter{CaseCount}{1}
\noindent \textbf{Case \arabic{CaseCount}:}  \emph{$n_1 \equiv 0$, $n_2 \equiv 3$, $n_3 \equiv 3$ $\pmod{4}$}
\vspace{\c}

\noindent As $n \equiv n_1+n_2+n_3$ $\pmod{4}$, $n \equiv 2$ $\pmod{4}$ and by Lemma \ref{lem:Pn-1}, $D(P_n,-1)=-1$. However by Lemma \ref{lem:Cn-1} and Lemma \ref{lem:Pn-1}, $D(P_{n_1},-1)=1$ and $D(C_{n_2},-1)=D(C_{n_3},-1)=-1$ so $D(G,-1)=1$, which, is a contradiction.


\vspace{\c}
\addtocounter{CaseCount}{1}
\noindent \textbf{Case \arabic{CaseCount}:}  \emph{$n_1 \equiv 1$, $n_2 \equiv 1$, $n_3 \equiv 1$ $\pmod{4}$}
\vspace{\c}

\noindent As $n \equiv n_1+n_2+n_3$ $\pmod{4}$, $n \equiv 3$ $\pmod{4}$ and by Lemma \ref{lem:Pn-1}, $D(P_n,-1)=1$. However by Lemma \ref{lem:Cn-1} and Lemma \ref{lem:Pn-1}, $D(P_{n_1},-1)=-1$ and $D(C_{n_2},-1)=D(C_{n_3},-1)=-1$ so $D(G,-1)=-1$, which is a contradiction.

\vspace{\c}
\addtocounter{CaseCount}{1}
\noindent \textbf{Case \arabic{CaseCount}:}  \emph{$n_1 \equiv 1$, $n_2 \equiv 1$, $n_3 \equiv 2$ $\pmod{4}$}
\vspace{\c}

\noindent As $n \equiv n_1+n_2+n_3$ $\pmod{4}$, $n \equiv 0$ $\pmod{4}$ and by Lemma \ref{lem:Pn-1}, $D(P_n,-1)=1$. However by Lemma \ref{lem:Cn-1} and Lemma \ref{lem:Pn-1}, $D(P_{n_1},-1)=-1$ and $D(C_{n_2},-1)=D(C_{n_3},-1)=-1$ so $D(G,-1)=-1$, which is a contradiction.

\vspace{\c}
\addtocounter{CaseCount}{1}
\noindent \textbf{Case \arabic{CaseCount}:}  \emph{$n_1 \equiv 1$, $n_2 \equiv 2$, $n_3 \equiv 2$ $\pmod{4}$}
\vspace{\c}

\noindent As $n \equiv n_1+n_2+n_3$ $\pmod{4}$, $n \equiv 1$ $\pmod{4}$, and so equation $(PCC1)$ reduces to

$$ \frac{n+1}{2}= \frac{n_1+1}{2}. $$

\noindent However this implies $n=n_1$, which is a contradiction.

\vspace{\c}
\addtocounter{CaseCount}{1}
\noindent \textbf{Case \arabic{CaseCount}:}  \emph{$n_1 \equiv 1$, $n_2 \equiv 1$, $n_3 \equiv 3$ $\pmod{4}$}
\vspace{\c}

\noindent As $n \equiv n_1+n_2+n_3$ $\pmod{4}$, $n \equiv 1$ $\pmod{4}$, and so equation $(PCC1)$ reduces to

$$ \frac{n+1}{2}= \frac{n_1+1}{2} +n_2$$

\noindent Therefore $n=n_1+2n_2$. Furthermore equation $(PCC2)$ reduces to

$$ -\frac{1}{8}(n-1)^2= -\frac{1}{8}(n_1-1)^2-n_2(n_1+1)-\frac{1}{2}n_2(n_2-1).$$

\noindent By substituting $n=n_1+2n_2$ into the reduced equation $(PCC2)$ and multiplying both sides by 8 we obtain

$$ -(n_1+2n_2-1)^2= -(n_1-1)^2-8n_2(n_1+1)-4n_2(n_2-1).$$

\noindent This simplifies to

\vspace{5mm}

\begin{tabular}{ r c l }
$0$ & $=$ & $-(n_1+2n_2-1)^2-(-(n_1-1)^2-8n_2(n_1+1)-4n_2(n_2-1))$ \\
$0$ & $=$ & $-(n_1+2n_2-1)^2+(n_1-1)^2+8n_2n_1+8n_2+4n_2^2-4n_2$ \\ 
$0$ & $=$ & $-(n_1-1)^2-2(2n_2)(n_1-1)-4n_2^2+(n_1-1)^2+8n_2n_1+4n_2+4n_2^2$ \\ 
$0$ & $=$ & $-2(2n_2)(n_1-1)+8n_2n_1+4n_2$ \\
$0$ & $=$ & $-4n_2n_1+4n_2+8n_2n_1+4n_2$ \\ 
$0$ & $=$ & $4n_2n_1+8n_2$ \\ 		 
\end{tabular}

\vspace{5mm}

\noindent As $n_2 \geq 3$, this equation has no solutions, which is a contradiction.

\vspace{\c}
\addtocounter{CaseCount}{1}
\noindent \textbf{Case \arabic{CaseCount}:}  \emph{$n_1 \equiv 1$, $n_2 \equiv 2$, $n_3 \equiv 3$ $\pmod{4}$}
\vspace{\c}

\noindent As $n \equiv n_1+n_2+n_3$ $\pmod{4}$, $n \equiv 2$ $\pmod{4}$, and so equation $(PCC1)$ reduces to

$$0= \frac{n_1+1}{2}.$$

\noindent Therefore $n_1=-1$, which is a contradiction as $n_1>0$.

\vspace{\c}
\addtocounter{CaseCount}{1}
\noindent \textbf{Case \arabic{CaseCount}:}  \emph{$n_1 \equiv 1$, $n_2 \equiv 3$, $n_3 \equiv 3$ $\pmod{4}$}
\vspace{\c}

\noindent As $n \equiv n_1+n_2+n_3$ $\pmod{4}$, $n \equiv 3$ $\pmod{4}$ and by Lemma \ref{lem:Pn-1}, $D(P_n,-1)=1$. However by Lemma \ref{lem:Cn-1} and Lemma \ref{lem:Pn-1}, $D(P_{n_1},-1)=-1$ and $D(C_{n_2},-1)=D(C_{n_3},-1)=-1$ so $D(G,-1)=-1$, which is a contradiction.

\vspace{\c}
\addtocounter{CaseCount}{1}
\noindent \textbf{Case \arabic{CaseCount}:}  \emph{$n_1 \equiv 2$, $n_2 \equiv 1$, $n_3 \equiv 1$ $\pmod{4}$}
\vspace{\c}

\noindent As $n \equiv n_1+n_2+n_3$ $\pmod{4}$, $n \equiv 0$ $\pmod{4}$ and by Lemma \ref{lem:Pn-1}, $D(P_n,-1)=1$. However by Lemma \ref{lem:Cn-1} and Lemma \ref{lem:Pn-1}, $D(P_{n_1},-1)=-1$ and $D(C_{n_2},-1)=D(C_{n_3},-1)=-1$ so $D(G,-1)=-1$, which is a contradiction.

\vspace{\c}
\addtocounter{CaseCount}{1}
\noindent \textbf{Case \arabic{CaseCount}:}  \emph{$n_1 \equiv 2$, $n_2 \equiv 1$, $n_3 \equiv 2$ $\pmod{4}$}
\vspace{\c}

\noindent As $n \equiv n_1+n_2+n_3$ $\pmod{4}$, $n \equiv 1$ $\pmod{4}$, and so equation $(PCC1)$ reduces to

$$ \frac{n+1}{2}=n_2, $$

\noindent and equation $(PCC2)$ reduces to

$$-\frac{1}{8}(n-1)^2 = \frac{1}{8}(n_1+2)^2+\frac{1}{4}n_3(n_3+2)-\frac{1}{2}n_2(n_2-1).$$

\noindent Therefore $n=2n_2-1$. As $n = n_1 + n_2 + n_3$, $n_2 = n_1+n_3+1$ and $n = 2n_1+2n_3+1$. So by substituting this into the reduced equation $(PCC2)$ and multiplying both sides by 8 we obtain

$$-(2n_1+2n_3)^2 = (n_1+2)^2+2n_3(n_3+2)-4(n_1+n_3+1)(n_1+n_3),$$

\noindent which simplifies to

\vspace{5mm}

\begin{tabular}{ r c l }
$-4(n_1+n_3)^2$ & $=$ & $(n_1+2)^2+2n_3(n_3+2)-4(n_1+n_3)^2-4(n_1+n_3)$ \\ 
$0$             & $=$ & $(n_1+2)^2+2n_3(n_3+2)-4(n_1+n_3)$ \\
$0$             & $=$ & $n_1^2+4n_1+4+2n_3^2+4n_3-4n_1-4n_3$ \\
$0$             & $=$ & $n_1^2+4+2n_3^2.$ \\

\end{tabular}

\vspace{5mm}

\noindent As $n_3 \geq 3$, this equation has no solutions, which is a contradiction.

\vspace{\c}
\addtocounter{CaseCount}{1}
\noindent \textbf{Case \arabic{CaseCount}:}  \emph{$n_1 \equiv 2$, $n_2 \equiv 2$, $n_3 \equiv 2$ $\pmod{4}$}
\vspace{\c}

\noindent As $n \equiv n_1+n_2+n_3$ $\pmod{4}$, $n \equiv 2$ $\pmod{4}$, and so equation $(PCC2)$ reduces to

$$\frac{1}{8}(n+2)^2 = \frac{1}{8}(n_1+2)^2+\frac{1}{4}n_3(n_3+2)+\frac{1}{4}n_2(n_2+2).$$

\noindent and equation $(PCC3)$ reduces to

$$-\frac{1}{16}n^3+\frac{1}{4}n = -\frac{1}{16}n_1^3+\frac{1}{4}n_1-\frac{3}{16}n_3^3+\frac{3}{4}n_3-\frac{3}{16}n_2^3+\frac{3}{4}n_2.$$

\noindent We will now substitute $n=n_1+n_2+n_3$ into the reduced equation $(PCC2)$:

$$-n_2^2-n_3^2+2n_1n_2+2n_1n_3+2n_2n_3=0.$$

\noindent Therefore if we isolate for $n_1$ we find

$$n_1 = \frac{(n_2-n_3)^2}{2(n_2+n_3)}.$$

\noindent By substituting this and $n=n_1+n_2+n_3$ into the reduced equation $(PCC3)$, multiplying by $-64n_2-64n_3$, and simplifying we obtain

$$n_2^4-8n_2^3n_3+30n_2^2n_3^2-8n_2n_3^3+n_3^4+32n_2^2+64n_2n_3+32n_3^2=0$$

\noindent We now substitute $n_3 = k-n_2$ into the equation above to obtain

$$48n_2^4-96kn_2^3+60k^2n_2^2-12k^3n_2+k^4+32k^2=0.$$

\noindent The solutions are

$$n_2 = \frac{1}{2}k \pm \frac{1}{12}\sqrt{18k^2 \pm 6k\sqrt{-3k^2-384}}.$$

\noindent Therefore $n_2$ is real only if $-3k^2-384 \geq 0$. However $-3k^2-384 < 0$ and we have no real solutions for $n_2$, which is a contradiction.

\vspace{\c}
\addtocounter{CaseCount}{1}
\noindent \textbf{Case \arabic{CaseCount}:}  \emph{$n_1 \equiv 2$, $n_2 \equiv 1$, $n_3 \equiv 3$ $\pmod{4}$}
\vspace{\c}

\noindent As $n \equiv n_1+n_2+n_3$ $\pmod{4}$, $n \equiv 2$ $\pmod{4}$ and by Lemma \ref{lem:P'n-1}, $D'(P_n,-1)=0$. However by Lemma \ref{lem:C'n-1} and Lemma \ref{lem:P'n-1} $D'(G,-1)=n_2$, so $n_2=0$, which is a contradiction as $n_2 \geq 3$.

\vspace{\c}
\addtocounter{CaseCount}{1}
\noindent \textbf{Case \arabic{CaseCount}:}  \emph{$n_1 \equiv 2$, $n_2 \equiv 2$, $n_3 \equiv 3$ $\pmod{4}$}
\vspace{\c}

\noindent As $n \equiv n_1+n_2+n_3$ $\pmod{4}$, $n \equiv 3$ $\pmod{4}$ and by Lemma \ref{lem:Pn-1}, $D(P_n,-1)=1$. However by Lemma \ref{lem:Cn-1} and Lemma \ref{lem:Pn-1}, $D(P_{n_1},-1)=-1$ and $D(C_{n_2},-1)=D(C_{n_3},-1)=-1$ so $D(G,-1)=-1$, which is a contradiction.

\vspace{\c}
\addtocounter{CaseCount}{1}
\noindent \textbf{Case \arabic{CaseCount}:}  \emph{$n_1 \equiv 2$, $n_2 \equiv 3$, $n_3 \equiv 3$ $\pmod{4}$}
\vspace{\c}

\noindent As $n \equiv n_1+n_2+n_3$ $\pmod{4}$, $n \equiv 0$ $\pmod{4}$ and by Lemma \ref{lem:Pn-1}, $D(P_n,-1)=1$. However by Lemma \ref{lem:Cn-1} and Lemma \ref{lem:Pn-1}, $D(P_{n_1},-1)=-1$ and $D(C_{n_2},-1)=D(C_{n_3},-1)=-1$ so $D(G,-1)=-1$, which is a contradiction.


\vspace{\c}
\addtocounter{CaseCount}{1}
\noindent \textbf{Case \arabic{CaseCount}:}  \emph{$n_1 \equiv 3$, $n_2 \equiv 1$, $n_3 \equiv 1$ $\pmod{4}$}
\vspace{\c}

\noindent As $n \equiv n_1+n_2+n_3$ $\pmod{4}$, $n \equiv 1$ $\pmod{4}$ and by Lemma \ref{lem:Pn-1}, $D(P_n,-1)=-1$. However by Lemma \ref{lem:Cn-1} and Lemma \ref{lem:Pn-1}, $D(P_{n_1},-1)=1$ and $D(C_{n_2},-1)=D(C_{n_3},-1)=-1$ so $D(G,-1)=1$, which is a contradiction.

\vspace{\c}
\addtocounter{CaseCount}{1}
\noindent \textbf{Case \arabic{CaseCount}:}  \emph{$n_1 \equiv 3$, $n_2 \equiv 1$, $n_3 \equiv 2$ $\pmod{4}$}
\vspace{\c}

\noindent As $n \equiv n_1+n_2+n_3$ $\pmod{4}$, $n \equiv 2$ $\pmod{4}$ and by Lemma \ref{lem:Pn-1}, $D(P_n,-1)=-1$. However by Lemma \ref{lem:Cn-1} and Lemma \ref{lem:Pn-1}, $D(P_{n_1},-1)=1$ and $D(C_{n_2},-1)=D(C_{n_3},-1)=-1$ so $D(G,-1)=1$, which is a contradiction.

\vspace{\c}
\addtocounter{CaseCount}{1}
\noindent \textbf{Case \arabic{CaseCount}:}  \emph{$n_1 \equiv 3$, $n_2 \equiv 2$, $n_3 \equiv 2$ $\pmod{4}$}
\vspace{\c}

\noindent As $n \equiv n_1+n_2+n_3$ $\pmod{4}$, $n \equiv 3$ $\pmod{4}$, and so equation $(PCC1)$ reduces to

$$ -\frac{n+1}{2}= -\frac{n_1+1}{2} $$

\noindent However this implies $n=n_1$, which is a contradiction.

\vspace{\c}
\addtocounter{CaseCount}{1}
\noindent \textbf{Case \arabic{CaseCount}:}  \emph{$n_1 \equiv 3$, $n_2 \equiv 1$, $n_3 \equiv 3$ $\pmod{4}$}
\vspace{\c}

\noindent As $n \equiv n_1+n_2+n_3$ $\pmod{4}$, $n \equiv 3$ $\pmod{4}$, and so equation $(PCC1)$ reduces to

$$ -\frac{n+1}{2}= -\frac{n_1+1}{2} -n_2.$$

\noindent Therefore $n=n_1+2n_2$. Furthermore equation $(PCC2)$ reduces to

$$ \frac{1}{8}(n-3)(n+1)= \frac{1}{8}(n_1-3)(n_1+1)+n_2(n_1+1)+\frac{1}{2}n_2(n_2-1).$$

\noindent By substituting $n=n_1+2n_2$ into the reduced equation $(PCC2)$ and multiplying both sides by 8 we obtain

$$(n_1+2n_2-3)(n_1+2n_2+1)= (n_1-3)(n_1+1)+8n_2(n_1+1)+4n_2(n_2-1),$$

\noindent which simplifies to

$$4n_2n_1+8n_2=0.$$

\noindent As $n_2 \geq 0$, this equation has no non-negative solutions, which is a contradiction.

\vspace{\c}
\addtocounter{CaseCount}{1}
\noindent \textbf{Case \arabic{CaseCount}:}  \emph{$n_1 \equiv 3$, $n_2 \equiv 2$, $n_3 \equiv 3$ $\pmod{4}$}
\vspace{\c}

\noindent As $n \equiv n_1+n_2+n_3$ $\pmod{4}$, $n \equiv 0$ $\pmod{4}$, and so equation $(PCC1)$ reduces to

$$0= -\frac{n_1+1}{2}.$$

\noindent Therefore $n_1=-1$, which is a contradiction as $n_1>0$.

\vspace{\c}
\addtocounter{CaseCount}{1}
\noindent \textbf{Case \arabic{CaseCount}:}  \emph{$n_1 \equiv 3$, $n_2 \equiv 3$, $n_3 \equiv 3$ $\pmod{4}$}
\vspace{\c}

\noindent As $n \equiv n_1+n_2+n_3$ $\pmod{4}$, $n \equiv 1$ $\pmod{4}$ and by Lemma \ref{lem:Pn-1}, $D(P_n,-1)=-1$. However by Lemma \ref{lem:Cn-1} and Lemma \ref{lem:Pn-1}, $D(P_{n_1},-1)=1$ and $D(C_{n_2},-1)=D(C_{n_3},-1)=-1$ so $D(G,-1)=1$, which is a contradiction.

\vspace{5mm}

As each case results in a contradiction, $G$ is not a disjoint union of $H$ and one cycle nor two cycles, where $H \in \{P_{n_1},\widetilde{P_{n_1}}\}$. We conclude $G$ has no cycle components and $G \in \{P_n,\widetilde{P_n}\}$.

\end{proof}

\section{Concluding Remarks}

While we have determined the domination equivalence classes for paths, there are many other families of graphs for which the equivalence classes are unknown. One salient one is cycles. Another extension is to consider trees, where the arguments are likely to be more difficult. Partial results on when trees are dominating unique would be interesting, and may highlight what dominating polynomials can say about graphs in the acyclic setting. 

\pagebreak

\vspace{0.25in}
\noindent {\bf Acknowledgements}
\vspace{0.1in}

\noindent J.I. Brown acknowledges support from the Natural Sciences and Engineering Research Council of Canada (grant application RGPIN 170450-2013). 


\bibliography{mybibfile}

\bibliographystyle{plain}

\end{document}